\newtheorem{prop}{Proposition}[section]
\newtheorem{lem}[prop]{Lemma}
\newtheorem{cor}[prop]{Corollary}
\newtheorem{them}[prop]{Theorem}
\newtheorem{defi}[prop]{Definition}
\newtheorem{nota}[prop]{Notation}
\theoremstyle{definition}
\newtheorem{rema}[prop]{Remark}
\newtheorem{exampl}[prop]{Example}
\newenvironment{rem}{ \begin{rema}   }{$\qed $ \end{rema}}
\newenvironment{examp}{ \begin{exampl}   }{$\qed $ \end{exampl}}
\newcommand{\Cechs}{ N[{\mathcal U}] }
\newcommand{\toto}{\rightrightarrows}
\newcommand{\BBB}{B}
\newcommand{\id}{{\mathrm{ Id}}}
\newcommand{\Aut}{{\mathrm{ Aut}}}
\newcommand{\G}{{\mathbf G}}
\newcommand{\gggg}{{\mathbf g}}
\newcommand{\vvvv}{{\mathbf v}}
\newcommand{\fatg}{{\mathbf g}}
\newcommand{\e}{{\mathrm{ e}}}
\newcommand{\GGG}{{\mathcal G}}
\newcommand{\lra}{{\longrightarrow}}
\newcommand{\HH}{{\mathbf H}}
\newcommand{\gpdproductR}{\, \bullet_{\mathcal R} \,}
\newcommand{\gpdactkernel}{\, \bullet_{{\mathcal R},{K}} \, }
\newcommand{\gpdactband}{\, \bullet_{{\mathcal R},{Band}} \,}
\newcommand{\gpdactP}{\, \bullet_{{\mathcal R},{P}} \,}
\begin{document}

\title{Lie groupoids and crossed module-valued gerbes over stacks}
\author{M.Jawad Azimi}

\date{\today}
\date{\texttt{}}
\maketitle
CMUC, Department of Mathematics,\\ University of Coimbra, 3001-454
Coimbra, Portugal\\

\begin{abstract}
We give a precise and general description of  gerbes valued in  arbitrary crossed module and over an arbitrary differential stack.
We do it using only Lie groupoids, hence ordinary differential geometry. We prove the coincidence with the existing notions by comparing our
 construction with non-Abelian cohomology.
\end{abstract}

\tableofcontents
\section{Introduction}
Differential gerbes appeared from the very beginning  as being classes in some "higher" cohomology \cite{Giraud}. For instance, non-Abelian gerbes correspond to non-Abelian $1$-cohomology in the sense of Dedecker \cite{BaezSchreiber,BM,Dedecker}. This is also the form under which it appears in theoretical physics \cite{GR,Witten}.
 But differential gerbes can also be thought of as being a certain class of bundles over a differential stack, and, to quote \cite{BehrendXu}, "there is a dictionnary between differential stacks and Lie groupoids".
The purpose of the present article is to add one entry to that dictionary, namely to define with great care, in terms of Lie groupoids and for all
crossed module $\G \to \HH $, the notion of $\G \to\HH $-gerbes  and to justify that definition by showing the coincidence of the notion introduced with non-Abelian $1$-cohomology.

There are, of course, several other manners to define non-Abelian gerbes, and to state their properties.
In a recent work \cite{Waldorf} these numerous definitions have been carefully enumerated and shown, in a rigorous manner, to coincide. More precisely, the authors of \cite{Waldorf} have merged four definitions of smooth $\Gamma$-gerbes, with $\Gamma$ a strict $2$-groups (notice that strict $2$-groups are indeed in one-to-one correspondence with crossed module):
\begin{enumerate}
\item smooth $\Gamma$-valued $1$-cocycles (for which they refer to \cite{BM}, but which matches by construction the definition in terms of non-Abelian cohomology in the sense of \cite{Dedecker} just mentioned), see also \cite{AJC}.
\item classifying maps valued in the realization $B\Gamma$ of the simplicial tower of $\Gamma $,
\item bundle gerbes in the sense of \cite{AschieriCantiniJurco},
\item principal $\Gamma$-bundles in the sense of Bartels \cite{Bartels}, the idea being to generalize the notion of principal bundle from Lie groups to Lie (strict) $2$-groups. This point of view was also used in \cite{GinotStienon}, and we will relate our construction to their
construction in due time.
\end{enumerate}
 But, as mentioned in \cite{Waldorf}, example 3.8, there is in the particular case of $\G \to Aut(\G)$-gerbes over manifolds a fifth equivalent definition which is in terms of Lie groupoid extensions.
  We can restate our purpose by saying that it consists in giving this fifth description in the general setting of arbitrary crossed module (and not only $\G \to \Aut (\G)$). Also, we give a definition that makes sense  when the base space is not a manifold but an arbitrary differential stack, reaching therefore
the same level of generality as \cite{GinotStienon} (we simply claim to be slightly more precise about the problem of
identifications of a priori different extensions defining the same gerbe).

  Although the four approaches just mentioned can be remarkably effective in the sense that the objects have short, simple and workable definitions, it always requires a deep familiarity with category theory (or even toposes and higher categories) making them hardly accessible for a mathematician not used  to these techniques. Our manner is maybe more difficult in the sense that the objects are always defined as classes of -oids up to Morita equivalences, which sometimes yield long definition, and forces us to check that  properties are Morita invariant, but it is certainly simpler in the sense that it uses
the ordinary language of differential geometry (manifolds, -oids, maybe \v{C}ech cocycles) from the beginning to the end. We can not claim that we avoid all categorical language, since groupoids are categories, but we use comparatively much less involved categorical tools.

  The present work is also in the continuation of \cite{BehrendXu} (where $S^1 $-gerbes over a differential stack are extensively studied using this Lie groupoid point of view), of \cite{LaurentStienonXu} (where the case of non-Abelian$\G\to\Aut(\G)$-gerbes over Lie groupoids is investigated, but the correspondence with non-Abelian  $1$-cocycles is not dealt with very precisely,
 and of \cite{BL}, (where the previous construction is investigated in detail for $\G$-gerbes and extended to connections). Our work is definitively in the same line of those, but there are important differences that we now outline. Abelian gerbes in the sense of \cite{BehrendXu} (resp. $\G$-gerbes in the sense of \cite{LaurentStienonXu}) corresponds to the case where the crossed modules in which the gerbe takes values is $S^1 \to pt$ (resp. $\G \to \Aut (\G)$), so that our work generalizes both. Second, we made more precise in the present article the notion of gerbes over an object (manifold, Lie groupoid, or differential stack). This means that, unlike \cite{LaurentStienonXu}, we do not simply define gerbes as being $\G$-extensions up to Morita equivalence, and this is for two reasons:
\begin{enumerate}
\item first, as already stated, we wish to make precise over what object our gerbe is, which
means that we only allow ourself to take Lie groupoid extensions $X \to Y $ where
the "small" Lie groupoid $Y$ is itself "over" a given object $B$ (manifold or Lie groupoid or differential stack).
 By "over", we mean that "Y" is obtained by taking a pull-back of $B$. Also, Morita equivalence should be taken in such a way that the base manifold or groupoid is not "changed". This last issue is easily understandable, and always appear in differential geometry: the space of principal bundles over a manifold $M$, in a similar fashion, is not obtained by considering all possible principal bundles $P \to M$ modulo principle bundles isomorphisms, but modulo principal bundles isomorphisms over the identity of $M$.
\item second, when taking an arbitrary crossed module $\G \to \HH $, Lie groupoid extensions are not enough.
By spelling out the manifold case, and knowing that we wish to have a correspondence with crossed module valued non-Abelian $1$-cocyle, we arrived at the conclusion that we need to consider a Lie groupoid $\G$-extension together with a principal $\HH$-bundle. These two structures are not independent, and, having in mind the manifold case again, one sees that we need this principle bundle to be equipped with a principal bundle morphism taking values in the band of the Lie groupoid extension, map on which still two constraints have to be imposed.
\end{enumerate}

The paper is organized as follows.
In section \ref{sec:GpdExt}, we recall from \cite{LaurentStienonXu} the notion of $\G$-extensions of Lie groupoids, i.e. a surjective submersion morphism
 of Lie groupoids over the same base ${\mathcal R} \stackrel{\phi}{\to} \GGG $, for which the kernel is a locally trivial bundle of groups with typical fiber $ \G$. We then recall, following \cite{LaurentStienonXu}, the notion of the band of the $\G$-extension, which is some principal bundle over the Lie groupoid ${\mathcal R}$. We then define $ \G \to \HH$-extensions, namely $\G$-extensions ${\mathcal R} \stackrel{\phi}{\to} \GGG $ endowed with some
principal $ \HH$-bundle which admits the band as a quotient, see definition \ref{def: Azimi-extension} for a more precise description.

We then recall the definition of Dedecker's non-Abelian $1$-cocyle (resp. non-Abelian $1$-coboundaries, non-Abelian $1$-cohomology)
on an open cover of a given manifold $N$ and describe a dictionary between these objects and $ \G \to \HH$-extensions.
More precisely, we define, given an open cover of a manifold, a subclass of $ \G \to \HH$-extensions called
adapted $ \G \to \HH$-extensions of the \v{C}ech groupoid, and we show the following points, given an open cover ${\mathcal U} $ on the manifold $ N$:
\begin{itemize}
 \item \emph{Proposition \ref{prop:cocycles}}, There is a one-to-one correspondence between:
         \begin{enumerate}
          \item[(i)] $ \G \to \HH$-valued non-Abelian $1$-cocycles w.r.t. ${\mathcal U} $
          \item[(ii)] adapted $ \G \to \HH$-extensions of the \v{C}ech groupoid $ N[{\mathcal U}]$
         \end{enumerate}
 \item \emph{Proposition \ref{prop:coboundaries}}, There is a one-to-one correspondence between:

         \begin{enumerate}
          \item[(i)] $ \G \to \HH$-valued non-Abelian $1$-coboundaries w.r.t. ${\mathcal U} $
          \item[(ii)] isomorphisms of adapted $ \G \to \HH$-extensions of the \v{C}ech groupoid $ N[{\mathcal U}]$
         \end{enumerate}
 \item \emph{Theorem \ref{cor:1-1Cohomology-adapted}}, There is a one-to-one correspondence between:
    \begin{enumerate}
          \item[(i)] $ \G \to \HH$-valued non-Abelian $1$-cohomology w.r.t. ${\mathcal U} $
          \item[(ii)] isomorphism classes of adapted $ \G \to \HH$-extensions of the \v{C}ech groupoid $ N[{\mathcal U}]$ up to Morita
 equivalence over the identity.
\item[(iii)] (assuming the covering to be a good one) isomorphism classes of $ \G \to \HH$-extensions of the \v{C}ech
groupoid $ N[{\mathcal U}]$ up to isomorphisms over the identity of $N[{\mathcal U}]$.
         \end{enumerate}
\end{itemize}

The first purpose of section \ref{sec:MoritaExt} is to show that our constructions are independent from the choice of an open cover and to reach therefore $ \G \to \HH$-valued non-Abelian $1$-cohomology
in its full generality. This requires to define the notion of Morita equivalence of  $ \G \to \HH$-extensions, which, in turn, allows to complete the previous isomorphisms to eventually obtain the one we are really interested in:
\begin{itemize}
\item \emph{Theorem \ref{th:coho=gerbes}}, There is a one-to-one correspondence between:
    \begin{enumerate}
          \item[(i)] $ \G \to \HH$-valued non-Abelian $1$-cohomology,
\item[(ii)]    $ \G \to \HH$-extensions of a pull-back of the
groupoid $ N \toto N $ up to Morita equivalence over the identity of $N$.
         \end{enumerate}
\end{itemize}
The point of this last theorem gives a clear hint of what a $ \G \to \HH$-gerbe over a given Lie-groupoid $\BBB$ should be, namely the
$ \G \to \HH$-extensions of a pull-back of the groupoid $ \BBB $ up to Morita equivalence over the identity of $\BBB$.
The last theorem of the present article says that Morita equivalent Lie groupoids $\BBB$ and $\BBB'$ have the same
 $ \G \to \HH$-gerbe over them, making sense therefore of the notion of  $ \G \to \HH$-gerbes over a differential stack.

\subsection{Pre-requisites}
\label{sec:prerequisites}

\noindent
A \emph{crossed module of Lie groups} (consult, for instance, \cite{BaezSchreiber}) is a quadruple $(\G, \HH, \rho,\jmath)$, where $\rho:\G \rightarrow \HH $
  and $\jmath : \HH \to \Aut (\G)$ are Lie group homomorphisms satisfying the next conditions, for all $ \fatg,\fatg' \in \G, h \in \HH$
\begin{enumerate}
\item $\rho \big(  h(\fatg) \big)=h \rho(\fatg) h^{-1}$
\item $ \rho (\fatg) \big(\fatg'\big)  =\fatg \fatg' \fatg^{-1} $
\end{enumerate}
with the understanding that $h(g) $, for every $ h\in \HH, g\in \G$ is a shorthand for $\jmath (h) (g) $.
Notice that here we consider that the action of $\HH$ on $\G$ to be a left-action, which is not the usual convention, but is necessary to recover the formulas of the $\G \to \Aut (\G)$ case as they are stated in \cite{LaurentStienonXu,BL}.
 In order to avoid an easily done confusion between elements in $\G$ and in $ \HH$, we shall denote by bold letters,
 $\fatg,\fatg'$ elements of $\G$, and in ordinary letters $h,h'$ elements in $\HH $. Also, bold letters shall be used for
  $\G$-valued functions. Last, it is customary to denote a cross-module by $\G\stackrel{\rho}{\to}\HH$, forgetting to make explicit the morphism~$\jmath $.

\bigskip
\emph{Notations related to open covers on manifolds.}\label{notation:Cech}
For ${\mathcal U}= (U_i)_{i \in I} $ an open cover on a manifold $N$,
we use the shorthand $U_{ij} = U_i \cap U_j$ for all $i,j \in I$, and introduce the convenient notation $$U_{i_1 \dots i_n} := U_{i_1} \cap \dots \cap  U_{i_n} $$  for all $n \in {\mathbb N}$ and all $i_1,\dots,i_n \in I$. We warn the reader that $U_{ij}$ is not equal to $U_{ji}$ for $i \neq j$, and, more generally, $U_{i_1 \dots i_n} $ is not equal $U_{i_{\sigma(1)} \dots i_{\sigma(n)}} $ (for $\sigma \in \Sigma_n$ a permutation, and $i_1,\dots,i_n$ distinct).

An extremely common notation in the literature dealing with gerbes is to denote by $x_i$ (resp. $x_{ij},x_{ijk}$) an element $x\in M$ that happens to belong to some open subset $ U_i$ (resp. $U_{ij}, U_{ijk} $), when it is seen as an element in $U_i$ (resp. $U_{ij},U_{ijk} $).
  We extend this convention for all kind of objects: for instance, for a function $ \lambda$ whose domain of definition is  $\coprod_{i_1,\dots,i_n \in I}  U_{i_1 \dots i_n}$,
we write $ \lambda_{i_1\dots i_n} $ for its restriction to~$U_{i_1 \dots i_n}$.
%





\bigskip
\emph{Lie groupoids : notations and basic facts.}
  Given $ M,N, P$ smooth manifolds and $f:M \to P $, $g: N \to P$ smooth maps, we define the \emph{fibered product} to be the closed subset of $M \times N  $ made of all pairs $(m,n)$ with $f(m)=g(n)$, and we denote it by $M \times _{f,P,g} N$ in general, and sometimes by $M \times _{P} N$ when there is no risk of confusion. The following is extremely classical :
 \begin{lem} \label{lem:manifold}\cite{BergerGostiaux}
  Let $ M,N, P$ be smooth manifolds. If at least one of the smooth maps $f:M \to P $ or $g: N \to P$ is a surjective submersion, then the set $M \times _{f,P,g} N$ is a smooth manifold.
  \end{lem}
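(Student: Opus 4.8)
The plan is to realize the fibered product as the preimage of the diagonal under a transversal map and then invoke the standard preimage theorem. Concretely, I would form the product manifold $M \times N$ (of dimension $\dim M + \dim N$) together with the smooth map $h : M \times N \to P \times P$ defined by $h(m,n) = (f(m), g(n))$. Writing $\Delta \subset P \times P$ for the diagonal, which is a closed embedded submanifold diffeomorphic to $P$, one has by the very definition of the fibered product that $M \times_{f,P,g} N = h^{-1}(\Delta)$. It then suffices to show that $h$ is transversal to $\Delta$, since the preimage of an embedded submanifold under a smooth map transversal to it is again an embedded submanifold, of codimension equal to that of $\Delta$, namely $\dim P$. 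This immediately yields that $M \times_{f,P,g} N$ is a smooth manifold, in fact an embedded submanifold of $M \times N$ of dimension $\dim M + \dim N - \dim P$.

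To verify transversality I would fix a point $(m,n) \in h^{-1}(\Delta)$ and set $p = f(m) = g(n)$. The condition to check is
$$\mathrm{d}h_{(m,n)}\big(T_{(m,n)}(M \times N)\big) + T_{(p,p)}\Delta = T_{(p,p)}(P \times P).$$
Under the canonical identifications $T_{(p,p)}(P \times P) \cong T_pP \oplus T_pP$ and $T_{(p,p)}\Delta \cong \{(v,v) : v \in T_pP\}$, the differential acts by $\mathrm{d}h_{(m,n)}(X,Y) = (\mathrm{d}f_m X, \mathrm{d}g_n Y)$. This is the only place the hypothesis enters: assuming $g$ is a surjective submersion, $\mathrm{d}g_n$ is onto, so taking $X = 0$ shows the image of $\mathrm{d}h_{(m,n)}$ already contains the subspace $\{0\} \oplus T_pP$; adding the diagonal then produces every pair $(v,w)$, i.e. the whole of $T_pP \oplus T_pP$. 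Transversality therefore holds at every point of the preimage, and the preimage theorem concludes the proof. If instead it is $f$ that is the submersion, the symmetric computation applies.

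The main conceptual point to get right, rather than any hard computation, is that $P$ carries no algebraic structure, so one cannot present the fibered product as the zero level set of a single $P$-valued difference map and appeal directly to the regular value theorem; the diagonal-in-$P \times P$ device is exactly what repairs this. I would also remark that only the submersion property, and not surjectivity, is used in the transversality check: surjectivity of $g$ is what additionally guarantees that the projection $M \times_{f,P,g} N \to M$ is itself a surjective submersion, a fact that gets used implicitly later when pulling back groupoids. Finally, staying closer to the ``ordinary differential geometry'' spirit emphasized in this paper, one can give an equivalent hands-on argument via the local normal form of a submersion: near each point one trivializes $g$ as a projection $(x,y) \mapsto x$, whereupon the fibered product is locally the graph $\{(m,(f(m),y))\}$, manifestly diffeomorphic to an open subset of $M \times \mathbb{R}^{\dim N - \dim P}$, and checking that these graph charts glue recovers the embedded submanifold structure directly.
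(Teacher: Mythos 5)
Your proof is correct, and there is nothing to compare it against: the paper gives no argument for this lemma at all, simply labelling it ``extremely classical'' and citing \cite{BergerGostiaux}. The transversality-to-the-diagonal argument you give (realizing $M \times_{f,P,g} N$ as $h^{-1}(\Delta)$ for $h(m,n)=(f(m),g(n))$ and checking that surjectivity of $\mathrm{d}g_n$ plus the diagonal spans $T_pP\oplus T_pP$) is exactly the standard proof the citation points to, and your side remarks --- that only the submersion property is needed for the manifold structure, while surjectivity is what makes the projection to $M$ a surjective submersion --- are both accurate and relevant to how the lemma is used later in the paper.
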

We refer to \cite{McK} for the definition of Lie groupoids, but we wish to clarify some notations. When introducing a Lie groupoid, we shall in general simply mention the names of the manifolds of objects and the manifolds of arrows, using the notation $\Gamma \toto M $. Indeed, the source, target and unit maps for all Lie groupoids $\Gamma \toto M$ shall be denoted by the same letters $s,t$ and $\epsilon$ respectively. In general, the product shall be either denoted by a fat dot $\bullet$ or simply skipped, and the exponent $-1$ shall be used for the inverse map. However, at some point, we shall have to consider pairs of manifolds that admit several different Lie groupoid structures, that, fortunately, have the same source, target and unit maps. We will then introduce a  notation for the product (and inverse) that will distinguish them.
Last, our convention is that the product $x\bullet x'$ of two elements $x,x'$ in a Lie groupoid is defined when $t(x)=s(x')$.

 A left-action of Lie groupoid $\mathcal B \toto \mathcal B_0$ on a  manifold $X$ with respect to a surjective submersion $p:X \to \mathcal B_0$ is a map
      $$ \mathcal B \times_{t,\mathcal B_0,p} X\longrightarrow X,$$
 (denoted by $(b,x)\mapsto b\cdot x$) such that $p(b\cdot x)= s(b)$ and subject to the following axioms, analogous to those of group actions:
 $$b\cdot (a\cdot x)=(b  a)\cdot x \hbox{ and } \epsilon({p(x)}) \cdot x=x, $$
 for all admissible $a,b \in \mathcal B$ and $x\in X$. We shall often say action for left-action for the sake of simplicity.
   Since we may have to deal with situations where there are more than one Lie groupoid or more than one manifold involved, it will be convenient to write an action by $b\bullet_{\mathcal B, X}x$, mentioning therefore in the notation itself which groupoid acts and which manifold is acted upon.

To ensure a self-contained exposition, we recall the definition of the pull-back of a Lie groupoid.
Notice first that, for a given manifold $B$, Lie groupoids over $B$ form a category, with morphisms being
Lie groupoid morphisms over the identity of $B$. Similarly, topological groupoids over a given manifold form a category.

\begin{defi}\label{def:pback}\cite{McK}
Let $p:M \to B  $ be a smooth map. The assignments below define a functor from the category of Lie groupoids
over $B$ to the category of topological groupoids over $M$:
\begin{enumerate}
\item {\emph (On objects)}
Let $\GGG  \toto B$ be a Lie groupoid over a manifold $B$. Then the set $ \GGG[p]:= M\times_{p,B,s}\GGG\times_{t,B,p} M $ is endowed with a topological groupoid structure over $M$ given as follows: the source and target $ s, t:\GGG[p] \rightarrow M$ are the projections on the first and the third components respectively, the unit map is given for all $x \in M $ by $x \mapsto (x,\varepsilon\circ p(x),x)$, where $\varepsilon$ is the unit map of the Lie groupoid $\GGG\toto B$. Last, the multiplication and the inverse are given by:
     $$(x,\gamma, y)\bullet(y,\gamma' , z)=(x, \gamma\bullet\gamma',z) \hbox{ and }
   (x,\gamma ,y)^{-1}=(y,\gamma^{-1},x). $$
   for all $x,y,z \in M$ and $\gamma , \gamma' \in \GGG.$
\item {\emph (On arrows)} Given $\phi : \GGG \to \GGG' $ be a Lie groupoid homomorphism over the identity of $B$, we set $\phi[p,M]$ to be $(n,r,n') \mapsto (n,\phi(r),n')$ for all $(n,r,n' ) \in \GGG[p,M] = M \times_B \GGG \times_{B} M $. By construction, $\phi[p,M] $ is a  Lie groupoid homomorphism over the identity of $M$ from $ \GGG[p]$ to $ \GGG'[p] $.
 \end{enumerate}
%
     The topological groupoid $\GGG[p] \toto M $ is called the \emph{pull-back of $\GGG \toto B$ with respect to $p: M \rightarrow B $}, or simply the pull-back groupoid when there is no risk of confusion.
\end{defi}

Indeed, the previous functor takes values in the category of Lie groupoids when $p$ is a surjective submersion.
More generally  \cite{CrainicMoerdijk}:

 \begin{lem}\label{lem: generalized surjective submersion}
 Let $\GGG \toto B$ be a Lie groupoid, $M$ be a manifold and $p:M \to B$ a smooth map. Then  $\GGG[p] $ admits a structure of Lie groupoid on the manifold $M$ if the map $\phi : M \times_{p,B,s}\GGG \to B$ given by $(m,\gamma) \mapsto t(\gamma) $, for all $(m,\gamma)\in M \times_{p,B,s}\GGG$, is a surjective submersion (in which case $p$ is called a \emph{generalized surjective submersion for $\GGG \toto B$}).
 \end{lem}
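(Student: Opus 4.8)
The plan is to produce the smooth manifold structure on $\GGG[p]$ by two nested applications of Lemma \ref{lem:manifold}, organising the bracketing of the defining fibered product so that at each stage the map being pulled back is a surjective submersion. First I would set $P := M \times_{p,B,s} \GGG$; since $\GGG \toto B$ is a Lie groupoid its source $s \colon \GGG \to B$ is a surjective submersion, so Lemma \ref{lem:manifold} makes $P$ a smooth manifold. I would then rewrite $\GGG[p] = P \times_{\phi,B,p} M$, where $\phi \colon P \to B$, $(m,\gamma) \mapsto t(\gamma)$, is precisely the map in the statement. As $\phi$ is a surjective submersion by hypothesis, a second application of Lemma \ref{lem:manifold} endows $\GGG[p]$ with a smooth manifold structure, sitting as an embedded submanifold of $M \times \GGG \times M$.

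With the manifold structure in hand, I would verify that the structure maps of Definition \ref{def:pback} are smooth. Since $\GGG[p]$ is embedded in $M \times \GGG \times M$, it is enough to check smoothness after composing with the inclusion into this ambient product, and then the unit $x \mapsto (x, \epsilon\circ p(x), x)$, the inverse $(x,\gamma,y) \mapsto (y, \gamma^{-1}, x)$ and, on composable pairs, the multiplication $((x,\gamma,y),(y,\gamma',z)) \mapsto (x, \gamma\bullet\gamma', z)$ are all visibly smooth, being assembled from $p$, from the unit, inversion and multiplication of $\GGG$, and from coordinate projections. The only point requiring care before one can even speak of smoothness of the multiplication is that the space of composable pairs $\GGG[p]^{(2)} = \GGG[p] \times_{t,M,s} \GGG[p]$ be a manifold, which will again follow from Lemma \ref{lem:manifold} as soon as $t$ (or $s$) is known to be a surjective submersion.

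The heart of the matter, and what I expect to be the main obstacle, is therefore to show that the source and target $s,t \colon \GGG[p] \to M$ are surjective submersions, as the definition of a Lie groupoid demands. For $t \colon (x,\gamma,y) \mapsto y$ this is immediate from the second fibered product: $t$ is exactly the projection $P \times_{\phi,B,p} M \to M$, that is, the base change of $\phi$ along $p$, and a base change of a surjective submersion is a surjective submersion. The source map is the delicate one, because the hypothesis constrains $\phi$ (the target side) whereas $p$ itself is only assumed smooth, so the symmetric projection argument is unavailable. To get around this I would first note that the inverse map is a smooth involution of $\GGG[p]$, hence a diffeomorphism, and then use the identity $s = t \circ \mathrm{inv}$ to conclude that $s$, being a surjective submersion composed with a diffeomorphism, is itself a surjective submersion. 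Once $t$ and $s$ are surjective submersions, $\GGG[p]^{(2)}$ is a manifold, the multiplication is smooth as above, and all the axioms of a Lie groupoid are met, completing the proof.
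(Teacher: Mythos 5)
Your proof is correct and takes essentially the same route as the paper's: the paper likewise obtains the manifold structure by applying Lemma \ref{lem:manifold} to $\phi$ and $p$ (with the first fibered product $M\times_{p,B,s}\GGG$ handled implicitly via the source submersion), and then declares the verification of the Lie groupoid axioms to be routine. Your write-up simply supplies that routine part explicitly --- in particular the observations that $t$ is a base change of $\phi$ and that $s=t\circ\mathrm{inv}$ --- which is exactly the detail the paper leaves unstated.
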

 \begin{proof}
 Lemma \ref{lem:manifold} applied to $\phi : M\times_{p,B,s}\GGG \to B$ and $p: M \to B$ implies that $(M \times_{p,B,s}\GGG) \times_{t,B,p} M$ is a manifold. It is routine to check that $(M \times_{p,B,s}\GGG) \times_{t,B,p} M$, together with the structure maps defined in definition \ref{def:pback} is a Lie groupoid.
 \end{proof}

Given a covering ${\mathcal U}=(U_i)_{i\in I}$ of a manifold $N$, the \emph{\v{C}ech groupoid } is the pull-back of the trivial groupoid $N \toto N$ with respect to the map $ \coprod_{i\in I}U_i \to M$ given by $x_i \mapsto x$ for all $x \in U_i$ (see notations above). Let us give an explicit description of it: the \v{C}ech groupoid is, explicitly, the Lie groupoid $\coprod_{i,j \in J} U_{ij} \toto \coprod_{i\in I} U_i$ with source $s(x_{ij}):=x_i$, target $t(x_{ij}):=x_j$, product $x_{ij}\bullet x_{jk}:=x_{ik} $, the unit map $\epsilon(x_i):=x_{ii} $ and inverse~$x_{ij}^{-1}:=x_{ji}$. In general, we shall simply denote the \v{C}ech groupoid by $\Cechs$ (instead of $ \prod_{i,j \in J} U_{ij} \toto \coprod_{i\in I} U_i$ or $N[\coprod_{i\in I} U_i ] $).

\section{Lie groupoids $\G \to \HH$-extensions}
\label{sec:GpdExt}

Let $\G \to \HH$ be a crossed module of finite dimensional Lie groups. The purpose of this section is to give a complete description, purely in terms of Lie groupoids, of  $\G \to \HH $-gerbes over a given stack, and to check that, when the stack in question is simply a manifold $N$, our notion gives back an already known description \cite{BM,BaezSchreiber,Dedecker} in terms of non-Abelian cohomology.

\subsection{Definition of Lie groupoid $\G \to\HH$-extension}
\label{subsec:defofextensions}

 In \cite{BL}-\cite{LaurentStienonXu} gerbes are described as Lie groupoids extensions (up to Morita equivalence of those). But this description mainly covers the case of the so-called $\G$-gerbes, i.e. gerbes valued in the crossed module $\G \to \Aut (\G)$. In order to describe, in purely Lie groupoid terms, $\G \to \HH$-gerbes, one needs to go further and to work with $\G$-extensions endowed with some principal $\HH$-bundle structure (up to Morita equivalence of those).

We first wish to introduce Lie groupoid extensions.

  \begin{defi}\cite{LaurentStienonXu}
  A Lie groupoid extension is a triple $(\mathcal{R},\GGG,\varphi)$, denoted by $\mathcal{R} \stackrel{\varphi}{\to}\GGG$ (or simply by $\mathcal{R}  \to \GGG$, when there is no risk of confusion), where $\mathcal{R}\toto M$ and $\GGG \toto M$ are Lie groupoids over the (same) manifold $M$ and the map $\varphi: \mathcal{R}  \to \GGG $ is a groupoid morphism over the identity of $M$ such that $\varphi$ is surjective submersion.
  \end{defi}

  The \emph{kernel} of a Lie groupoid extension $\mathcal{R} \stackrel{\varphi}{\to}\GGG$ is, by definition, the inverse image through $\varphi$ of the unit manifold of $\GGG$, \emph{i.e.} the set  $$K=\{r\in \mathcal{R} : \varphi(r) \in \epsilon(M) \}.$$ Since $\varphi$ is a surjective submersion, the kernel is a submanifold of $\mathcal{R}$. Also, since $\varphi$ is a groupoid homomorphism over the identity of $M$, $K$ is indeed a bundle of Lie groups (\emph{i.e.} it is a Lie groupoid whose source and target maps coincide). Notice that $K$ is normal in $\mathcal{R}$ in the sense that $r^{-1} \gpdproductR k \gpdproductR r \in K$ for all admissible $k \in K, r \in \mathcal{R}$.
  The previous assignment defines indeed a Lie groupoid action of $\mathcal{R} $ on $K \to M $, action that we shall denote by $\gpdactkernel $.

  \begin{defi}\cite{LaurentStienonXu}
   Let $\G$ be a Lie group. A Lie groupoid extension $\mathcal{R} \stackrel{\varphi}{\to}\GGG $ is called a Lie groupoid $\G$-extension if its kernel $K$ is locally trivial with typical fiber $\G$,\emph{i.e.} if every point $x\in M$ ($M$ being the base manifold of both $\mathcal{R}$ and $\GGG $) admits a neighborhood $U$ such that $K_{U}=\varphi^{-1}(\epsilon(U))$ is isomorphic to $\G\times U$.
  \end{defi}

  To a Lie groupoid $G$-extension $\mathcal{R} \stackrel{\varphi}{\to}\GGG$, we now associate a principal $\Aut(\G)$-bundle over the groupoid $ \mathcal{R}  \toto M$, called the band of the extension. We first recall the notion of principal $\HH$-bundle over a Lie groupoid. See \cite{LTX} for instance.

  \begin{defi}
  Let $\HH$ be a Lie group and $ \mathcal{R}  \toto M$ be a Lie groupoid.
  A principal $\HH$-bundle over the Lie groupoid $\mathcal{R}\toto M$ is an usual (right) principal $\HH$-bundle $P \stackrel{\pi}{\to} M$  together with a (left) action of the Lie groupoid $ \mathcal{R} \toto M$ on $P \stackrel{\pi}{\to} M$ such that the ${\mathcal R}$ and the $\HH$ actions commute, \emph{i.e.} if we denote the action of the Lie groupoid $\mathcal{R}\toto M$ and the action of the Lie group $\HH$ on $P \stackrel{\pi}{\to} M$, both by the same notation $\cdot$, then $(\gamma\cdot p)\cdot h=\gamma\cdot(p\cdot h)$, for all admissible $\gamma \in \mathcal R, p\in P, h\in \HH$.
  \end{defi}

    We also define morphisms between two principal bundles w.r.t.different groups over different Lie groupoids, as follow.

\begin{defi}\label{defi:isomppalbundles}
A morphism from a principal $\HH$-bundle $P \stackrel{\pi}{\to} M$ over a Lie groupoid $\mathcal{R}\toto M$ to a  principal $\HH'$-bundle $P' \stackrel{\pi'}{\to} M'$ over a Lie groupoid $\mathcal{R'}\toto M'$ is triple a $(\Phi,\Psi,\jmath)$, where $\Phi : \mathcal R \rightarrow \mathcal R'$ is an morphism of Lie groupoids $\Psi: P \to P' $ is diffeomorphism  and $\jmath: \HH \to \HH' $ be a Lie group morphism, such that:
    $$ \Psi ( \gamma \gpdactP p \cdot h ) = \Phi(\gamma) \, \bullet_{{\mathcal R'},{P'}} \,  \Psi (p) \cdot \jmath(h)   $$
    for all pair $(\gamma,p) \in \mathcal{R} \times_{t,M,\pi} P $ and all $h \in \HH$.
%
When the Lie groupoids $\mathcal{R}\toto M$ and $\mathcal{R'}\toto M'$ are identically, the same Lie groupoid and the map $\Phi $ is identity, then the morphism $(\Phi,\Psi,\jmath)$ is called  \emph{morphism over the identity of ${\mathcal R} \toto M $} and  simply denoted by pair $(\Psi,\jmath)$.
  \end{defi}
The band \cite{Giraud,BM} is, in general, defined for gerbe itself, but \cite{LaurentStienonXu} introduced a notion of band for Lie groupoid  $\G$-extensions that boils down to the band of  gerbes. Let $\G$ be a Lie group. Then \emph{ band of a given $\G$-extension $\mathcal{R}\stackrel{\varphi}{\rightarrow} \GGG$}, by construction, is the set of all Lie group morphisms from $\G$ to some fiber of its kernel. More precisely, let $\G$ be a Lie group and $\mathcal{R} \stackrel{\varphi}{\rightarrow} \GGG$ be a $\G$-extension, we set
\begin{equation}
Band( \mathcal{R} \rightarrow \GGG ) :=\coprod_{m\in M} isom(\G,K_{m})
\end{equation}
to be the set of all possible Lie group isomorphisms from $\G$ to fiber $K_m$, for some $m\in M$, where $K_m=\{k\in K| \varphi(k)=\epsilon(m)\}$.
Recall from \cite{LaurentStienonXu} that
\begin{enumerate}
\item \label{actAutonBand}$Band( \mathcal{R} \rightarrow \GGG ) $ admits a natural manifold structure, for which the projection on $M$ is a smooth surjective submersion.
We let $Band_m( \mathcal{R} \rightarrow \GGG )$ stand for the fiber over $m \in M $.
\item $\Aut (\G)$ acts (on the right) freely and transitively on the fibers of $Band( \mathcal{R} \rightarrow \GGG ) $ as follows: $b_{m}\cdot \rho:=b_{m}\circ \rho$, for all $\rho \in \Aut (\G)$ and  $b_{m}\in Band_m( \mathcal{R} \rightarrow \GGG)$.
\end{enumerate}
All these items together imply that for a given Lie group $\G$ and a $\G$-extension $\mathcal{R}\to \GGG$, $Band( \mathcal{R} \rightarrow \GGG ) \stackrel{\pi}{\to} M$ is a (right) principal $\Aut(\G)$-bundle over the base manifold $M$, where $\pi$ is the obvious projection to the manifold $M$. Moreover, $Band( \mathcal{R} \rightarrow \GGG ) \stackrel{\pi}{\to} M$ is a principal $\Aut(\G)$-bundle over the Lie groupoid $\mathcal{R}\toto M$, when equipped with a left action of $\mathcal{R}\toto M$ on $Band( \mathcal{R} \rightarrow \GGG ) \stackrel{\pi}{\to} M$, defined by setting $r\gpdactband b_{m} $ to be the Lie group morphism from $\G$ to $K_{s(r)} $, given by
 \begin{equation}\label{def:def of gpdactband}
 g \mapsto r b_m(g) r^{-1},
 \end{equation}
 For all $r\in \mathcal{R}$ with $t(r)=m,b_m\in isom(\G,K_m)$.


We now have all the tools required for defining the type of extension whose (to be defined in section \ref{sec:MoritaExt}) quotients shall define $\G \to \HH $-gerbes.

\begin{defi}\label{def: Azimi-extension}
Let $\G \stackrel{\rho}{\to} \HH$ be a crossed module, with action map $\jmath: \mathbf H \to \Aut (\G)$ and $\GGG \toto M $ be a Lie groupoid. A Lie groupoid $\G \to \mathbf H$-extension(or simply a $\G \to \mathbf H$-extension) of the Lie groupoid $\GGG \toto M $ is a triple $(\mathcal{R} \rightarrow \GGG, P \to M, \chi)$, where:
 \begin{enumerate}
 \item $\mathcal{R} \rightarrow \GGG$ is a Lie groupoid $\G$-extension,
 \item  $P \to M $ is a principal $\HH$-bundle over the Lie groupoid $ \mathcal{R} \toto M$,
 \item $(\chi,\jmath)$ is a morphism over the identity of $ {\mathcal R} \toto M$ (see definition \ref{defi:isomppalbundles}) from the principal $\HH$-bundle $P \to M $ to the principal $\Aut(\G)$-bundle $Band( \mathcal{R} \rightarrow \GGG ) $,
 \end{enumerate}
      such that, for all $p\in P, g \in \G$:
  \begin{equation}
  \label{eq:chi} \begin{array}{rcl}
    p \cdot \rho (g) & =  & \chi(p) (g) \gpdactP p  \end{array}
    \end{equation}
    (recall that $\chi(p)$ belongs to $Band_{\pi(p)}(  \mathcal{R} \rightarrow \GGG)= Aut (\G, K_{\pi(p)}) $, so that  $\chi(p) (g)$ is an element in $ K_{\pi (p)} \subset {\mathcal{R}}$: it makes therefore sense to let it act on $p \in P$).
\end{defi}

It shall be convenient to draw the following diagram in order to represent $\G \to \HH$-extensions.
Below, it shall be understood that an arrow of the type $\mathcal{R} \xymatrix{  \ar@{{}{--}^{)}}[r] & P}$ means that
the groupoid $\mathcal{R} $ acts on $P$.
$$\xymatrix{  {\mathcal R} \ar@{{}{--}^{)}}[dr] \ar@{{}{--}^{)}}[drr]  \ar[d]  &   \\ {\GGG} \ar@<2pt>[d] \ar@<-2pt>[d] & \ar[dl] P \ar[r]^-{\chi }  & \ar[dll] Band \\  M\\} $$

Let $\G \to \HH$ be a crossed module. By an \emph{isomorphism between two $\G \to \HH$-extensions of Lie groupoid  $\GGG \toto M $}, namely
$(\mathcal{R} \rightarrow \GGG, P \to M, \chi)$
and
$(\mathcal{R}' \rightarrow \GGG, P' \to M, \chi')$, we mean an isomorphism $(\Phi,\Psi,id_\HH)$ of principal bundle over Lie groupoids (see definition \ref{defi:isomppalbundles}) such that the following diagram commutes:
\begin{equation}\label{diagram for isomorphism} \begin{CD}
P @>\Psi>> P'\\
@VV\chi V @VV\chi'V\\
Band(\mathcal{R}\rightarrow\GGG ) @>\bar{\Phi}>> Band(\mathcal{R'}\rightarrow\GGG)
\end{CD}
\end{equation}
where $\bar{\Phi}(\eta)(g)=\Phi(\eta(g))$, for $\eta \in Band(\mathcal{R}\rightarrow\GGG ), g \in \G $. For the sake of simplicity we suppress $id_\HH$ and use the notation $(\Phi,\Psi)$ for such an isomorphism.




\begin{examp}
\label{ex:1toHH}
When the crossed module is simply $ \{1\} \to \HH $, then $\G \to \HH$-extensions are nothing than principal $\HH$-bundles over Lie groupoids, and isomorphisms of $\G \to \HH$-extensions amount to isomorphisms of those.
\end{examp}

\begin{examp}\label{ex:G_extensions_as_Azimi_extensions}
For every $\G$-extension $\mathcal{R} \rightarrow \GGG \toto M$, the quadruple
 \begin{equation}\label{eq:quadruple} (\mathcal{R} \rightarrow \GGG , Band(\mathcal{R} \rightarrow \GGG  ) \to M, \id_{Band(\mathcal{R} \rightarrow \GGG )}) \end{equation}
 is a $\G \to \Aut (\G)$-extension.
Conversely, when the crossed module $\G \to \HH$ is $ \G \to \Aut (\G) $, then for every $\G \to \HH$-extension $(\mathcal{R} \rightarrow \GGG , P \to M, \chi)$, the pair $(\chi,Id_{\Aut (\G)})  $ is an isomorphism of principal bundles over the identity of  $\mathcal{R} \toto M$.
  In conclusion, the assignment of (\ref{eq:quadruple}) induces a one-to-one correspondence between $ \G \to \Aut (\G) $-extensions and $\G$-extensions. This correspondence is an equivalence of categories, for isomorphisms of $\G \to \Aut(\G)$-extension amount to isomorphisms of the corresponding $\G$-extension.
\end{examp}
\begin{rem}
\label{rem:linkGinotStienon}
The referee pointed to us the next point, relating our construction with \cite{GinotStienon}.
As recalled in \cite{GinotStienon}, crossed module of Lie groups $\G\stackrel{\rho}{\to}\HH$ can be seen as a Lie $2$-group. To say it briefly, the $2$-arrows are all triples $(h_1,g,h_2) \in \HH \times \G \times \HH$ subjects to the constraint
 $$ h_1  = \rho(g)h_2 .$$
The horizontal and vertical products are given by:
 $$ (h_1,g_1,h_2) ._V (h_2,g_2,h_3) = (h_1,g_1g_2,h_3) \hbox{ and } (h_1,g_1,h_2)._H(h_3,g_2,h_4)=(h_1h_3, g_1h_2(g_2),h_2h_4) $$
respectively.

Now, let $(\mathcal{R}\stackrel{\varphi}{\to}\GGG, P\stackrel{\pi}{\to}M,\chi)$ be a $\G\stackrel{\rho}{\to}\HH$-extension of the Lie groupoid $\GGG\toto M$,
 with kernel $K$. There is a natural $2$-groupoid with $2$-arrows the set $ \mathcal{R} \times_\GGG \mathcal{R}$ of all pairs of elements in $\mathcal{R}$ projecting over the same element of $\GGG$: the horizontal and vertical products $._V$ and $._H$ being given in the obvious manner:
  $$ (r_1,r_2) ._V(r_2,r_3)=(r_1,r_3) \hbox{ and } (r_1,r_2) ._H(r_3,r_4) = (r_1r_3,r_2r_4)  .$$

Assume that there exists a section $\sigma : M \to P$ of the projection $ \pi: P \to M $.
A natural identification of the kernel $K$ with $\G \times M$ is induced: more precisely, $\chi \circ \sigma $ is,  at all point $m \in M$, a Lie group isomorphism between $K_m$ and $\G$.
Since $r\bullet \sigma(t(r))$ and $\sigma(s(r))$ are in same fiber of $\pi: P \to M$,  there is an unique element $\psi(r)\in \HH$ such that $r\bullet\sigma_{t(r)}=\sigma_{s(r)}\cdot\psi(r)$. The map
\begin{equation*}
\begin{array}{c}
\psi:\mathcal{R}\to \HH\\
r \bullet \sigma (t(r))=\sigma (s(r)) \cdot \psi(r),
\end{array}
\end{equation*}
where $\cdot$ stands for the action of Lie group $\HH$ on the manifold $P$, is well-defined.
A direct verification shows that sending a pair $(r_1,r_2) $ of elements in $ \mathcal{R} \times_\GGG \mathcal{R} $
to the triple $ ( \psi(r_1), \chi \circ \sigma^{-1} (r_1r_2^{-1})  , \psi(r_2) ) \in \HH \times \G \times \HH$, one obtains a morphism of Lie $2$-groupoid.

As a consequence, for every  $\G\stackrel{\rho}{\to}\HH$-extension $(\mathcal{R}\stackrel{\varphi}{\to}\GGG, P\stackrel{\pi}{\to}M,\chi)$,
a morphism of $2$-groupoid from  $ \mathcal{R} \times_\GGG \mathcal{R} $ to the crossed module can be constructed.
Now, it happens that $ \mathcal{R} \times_\GGG \mathcal{R} $ is Morita equivalent, in a sense defined in \cite{GinotStienon}, to $\GGG $, so that
 for every  $\G\stackrel{\rho}{\to}\HH$-extension $(\mathcal{R}\stackrel{\varphi}{\to}\GGG, P\stackrel{\pi}{\to}M,\chi)$ a $\G \to \HH$-extension
in the sense of \cite{GinotStienon} can be defined.

This construction can be done backward, but there is a delicate point.
A $2$-group bundle in the sense of \cite{GinotStienon} over a Lie groupoid $\GGG$ will not induce in general a
$\G \to \HH$-extension of $\GGG$, but a $\G \to \HH$-extension of a Lie groupoid $\GGG'$ which is a pull-back of $\GGG$.
\end{rem}

\subsection{The manifold case: $\G \to \HH $-valued non-abelian cocycles as $\G \to \HH$-extensions over  Lie groupoid}
\label{subsec:mfdcase}

Throughout the present section, we shall fix an open covering ${\mathcal U}= (U_i)_{i \in I}$ of a manifold~$N$.

 Our purpose is to show that $\G \to \HH$-extensions of the  \v{C}ech groupoid $\Cechs$ correspond to $\G \to \HH$-valued non-Abelian $1$-cohomology, computed with respect to ${\mathcal U}$. We first recall the notion of non-Abelian $1$-cocycles \cite{Dedecker,BM}, as introduced by Dedecker. Then, we show that these are in one-to-one correspondence with (a certain set of) $\G \to \HH$-extensions of the  \v{C}ech groupoid $\Cechs$. Proving that $\G \to \HH$-coboundaries correspond to isomorphisms of these extensions shall then yield to the desired conclusion.

\begin{defi} \label{definition of Adapted extension}
An adapted Lie groupoid $\G \to \HH$-extension of the \v{C}ech groupoid $\Cechs $ is a $\G \to \HH$-extension  $({\mathcal R} \stackrel{\varphi}{\to}  \Cechs, P \to \coprod_{i \in I} U_{i}, \chi) $
on which we impose the following constraints:
\begin{enumerate}
  \item ${\mathcal R}$ is the space $ \G \times  \coprod_{i,j \in I} U_{ij}$ and $ \varphi$ is the projection onto the second component,
    \item $P$ is the space $  \coprod_{i \in I} U_{i} \times \HH $, equipped with the trivial right $\HH$-action $ (x_i,h) \cdot h'= (x_i,hh')$ for all $h,h' \in \HH, x\in U_{i}$,
          \item The map $\chi : P \to Band ( {\mathcal R} \stackrel{\phi}{\to}  \Cechs)$ maps $ (x_i,h) \in P $ to the element of the band over $x_i$ given by $  g \mapsto (h (g),x_{ii}) $ for all $g \in \G, h \in \HH, x \in U_i $, where we have used the same notation for
   \item the Lie groupoid product $\gpdproductR$ of ${\mathcal R}$ satisfies the relation $(g,x_{ii}) \gpdproductR (g',x_{ij}) =
     (gg', x_{ij}) $ for all~$x \in U_{ij}, g,g' \in \G, i,j\in I$.
    \end{enumerate}
\end{defi}
  Items 1 and 4 of the definition imply that the kernel of an adapted Lie groupoid $\G \to \HH$-extension of the \v{C}ech groupoid $\Cechs $ is the trivial bundle of group:
$K=\G \times \coprod_{i \in I} U_{ii}\simeq \G \times \coprod_{i, \in I} U_{i}$
so that the band $Band(\mathcal R \rightarrow \Cechs)$ is canonically isomorphic to
 $\Aut(\G) \times \coprod_{i \in I} U_{i} $.

For a given manifold $N$, a given open covering $ {\mathcal U}$ and a given crossed module $\G \to \HH $, adapted Lie groupoid $\G \to \HH$-extensions of the same \v{C}ech groupoid $\Cechs $ may only differ by two things, namely the Lie groupoid product on  $\mathcal{R}=\G \times \coprod_{i,j \in I} U_{ij}$ and the action of $ \mathcal{R}=\G \times \coprod_{i,j \in I} U_{ij}$ on $P:= \coprod_{i \in I} U_i \times \G $.

\begin{nota}\label{not:adaptextensions}
For $ {\mathcal U}$ an open covering of a manifold $N$, we shall denote adapted Lie groupoid $\G \to \HH$-extensions of $\Cechs $ as triples $({\mathcal U},\bullet, \star)$, where $\mathcal U$ refers to the open covering, $\bullet $ refers to the multiplication of the Lie groupoid $\mathcal{R}:= \G \times \coprod_{i,j \in I} U_{ij}$ and $\star$ refers to the action of $\mathcal{R}:= \G \times \coprod_{i,j \in I} U_{ij}$ on the principal bundle $P:=  \coprod_{i \in I} U_i \times \G $.
\end{nota}


\begin{rem}\label{rmk:remark_on_adapted}
For an adapted Lie groupoid $\G \to \HH$-extension $(\mathcal U,\bullet, \star)$, the action of an element $(g, x_{ii}) $  in the kernel of ${\mathcal R} \stackrel{\phi}{\to}  \Cechs$ on an admissible element $(x_i,h) \in P$ is given by $(x_i,\rho(g)h)$. We prove it as follows.
First notice that:
\begin{equation}
\begin{array}{rcll}
(x_{i},h) \cdot \rho(g) & = & (x_{i},h\rho(g)) & \hbox{by definition \ref{definition of Adapted extension}, item 2}\\
                 & = & (x_{i},h\rho(g)h^{-1}h) & \hbox{}\\
                 & = & (x_{i},\rho(h(g))h) & \hbox{by axioms of crossed module.}
\end{array}
\end{equation}
On the other hand:
\begin{equation}
\begin{array}{rcll}
(x_{i},h) \cdot \rho(g)
                 & = & \chi (x_{i}, h)(g) \star (x_{i},h) &\hbox{by (\ref{eq:chi}) in definition \ref{def: Azimi-extension}}\\
                 & = & (h(g),x_{ii}) \star (x_{i},h) & \hbox{by definition \ref{definition of Adapted extension}, item 3.}
\end{array}
\end{equation}
The result follows by substituting $h(g) $ by $g $ in the previous relations.
\end{rem}
We now recall from \cite{Dedecker} the notion of non-Abelian $1$-cocycles valued in an arbitrary crossed module $\G \to \HH$. We use the notation $e$ for the neutral element of both Lie groups $\G, \HH$.
\begin{defi}\label{def:nonab}
Let $\G\stackrel{\rho}{\to} \HH$ be a crossed module, and ${\mathcal U}=(U_i)_{i \in I} $ an open covering of a manifold $N$.
A non-Abelian $1$-cocycle w.r.t. ${\mathcal U} $ with values in $\G\to \HH$ is a pair $(\lambda, \gggg)  \in \mathcal C ^{\infty}( \coprod_{i,j \in I}U_{ij},\HH ) \times \mathcal C ^{\infty} (\coprod_{i,j,k \in J}U_{ijk},\G) $
 required to satisfy the following conditions:
   \begin{equation}\label{nonAbe}  \left\{
    \begin{array}{l}
   \rho(\gggg_{ijk})  \lambda_{ik} =  \lambda_{ij}\lambda_{jk} \\
    \gggg_{ijk}\gggg_{ikl}=\lambda_{ij}(\gggg_{jkl})\gggg_{ijl}\\
    \gggg_{iij}=e
\end{array}   \right. \end{equation}
for all possible indices (here $\lambda_{ij}$ (resp.$\gggg_{ijk}$) stands for the restriction of $\lambda$ (resp. $\gggg$ ) to $ U_{ij} $ (resp.$ U_{ijk} $ )).
\end{defi}

\begin{rem} \label{rmk:lambdaii} Note that the first relation in  (\ref{nonAbe}), when $ i=j$, implies that $\lambda_{ii}=\e $, for all $i\in I$.
\end{rem}

We now prove the desired correspondence which generalizes \cite{BL} (recall that $\jmath: \HH \to \Aut (\G)$ is part of the crossed module structure, see section \ref{sec:prerequisites}).

\begin{prop}\label{prop:cocycles}
Let $ {\mathcal U}=(U_i)_{i \in I}$ be an open covering of a manifold $N$,
and $\G \to \HH$ a crossed module of Lie groups.
\begin{enumerate}
\item Let $(\mathcal U,\bullet, \star)$ be an adapted Lie groupoid $\G \to \HH$-extension of \v{C}ech groupoid $N[\mathcal U]$. We define $(\lambda,\gggg) \in  \mathcal C ^{\infty}( \coprod_{i,j \in I}V_{ij},\HH ) \times \mathcal C ^{\infty} (
\coprod_{i,j,k \in J}V_{ijk},\G) $ gluing together the family of maps $\lambda_{ij}:U_{ij} \to \HH$ and $\gggg_{ijk}:U_{ijk} \to \G $ defined by
    \begin{equation}
  \label{eq:cons}
      \left\{ \begin{array}{rcll}

                        (\e,x_{ij}) \star (x_{j},\e) &=& ( x_{i}, \lambda_{ij} ) & \forall i,j\in I, \forall x\in U_{ij} \\
                      (\e,x_{ij}) \bullet  (\e,x_{jk}) &=& (\gggg_{ijk}, x_{ik}) & \forall i,j,k\in I \forall x\in U_{ijk} .\\
       \end{array} \right.
  \end{equation}
  Then $(\lambda_, \gggg)$ is a non-Abelian $1$-cocycle.


 \item Given a non-Abelian $1$-cocycle  $(\lambda ,\gggg)$, we define:
   \begin{enumerate}
   \item a Lie groupoid structure $\bullet$ on $\mathcal R =\G \times \coprod_{i,j\in I} U_{ij} $ by:
     \begin{equation}\label{eq:recons}  \left\{ \begin{array}{rcl} (g,x_{ij})  (g',x_{jk}) &:=&
     (g\lambda_{ij}(g')\gggg_{ijk}, x_{ik}) \\ (g,x_{ij})^{-1} &:=& (
     \lambda_{ij}^{-1}(g^{-1}\gggg_{iji}^{-1}) , x_{ji})
 \end{array} \right. \end{equation}
 for all $g,g' \in \G, i,j \in I, x \in U_{ij}$, where $\lambda_{ij}(g')$ is a short hand notation for $\jmath (\lambda_{ij})(g')$.
   \item a map $ \phi: \mathcal R \to N[\mathcal U] $ given by $ (g,x_{ij}) \mapsto x_{ij} $, for all $ g \in \G ,i,j \in I , x \in U_{ij} $,
   \item a structure of principal $\HH$-bundle $\star$ on $P:=\coprod_{i\in I} U_{i} \times \HH $ over the Lie groupoid $\mathcal R \toto \coprod_{i\in I} U_{i} $ by
  \begin{equation}\label{defi: ppalstronP}
(g,x_{ij}) \star (x_{j},h)=(x_{i},\rho(g)\lambda_{ij} h),
\end{equation}
for all $g\in \G , h\in \HH, i,j\in I, x\in U_{ij}$,
   \item a map $\chi: P \to Band(\mathcal R \to N[\mathcal U])$ by $(x_{i},h)\mapsto (\jmath(h),x_{i}),$
   for all $ h\in \HH, i,\in I, x\in U_{i}$,
   \end{enumerate}
   then $(\mathcal U,\bullet , \star )$ is an adapted Lie groupoid $\G \to \HH$-extension of the \v{C}ech groupoid $N[\mathcal U]$,

   \item the procedures in items 1 and 2 are inverse to each other.


 \end{enumerate}
 \end{prop}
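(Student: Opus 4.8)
The plan is to verify the three assertions of Proposition \ref{prop:cocycles} in order, treating each as a computation that unwinds the defining relations \eqref{eq:cons}, \eqref{eq:recons}, \eqref{defi: ppalstronP}, and the cocycle conditions \eqref{nonAbe}. For item 1, I start from an adapted extension $(\mathcal U,\bullet,\star)$ and must show that the functions $\lambda_{ij},\gggg_{ijk}$ extracted via \eqref{eq:cons} satisfy the three equations of \eqref{nonAbe}. The first relation $\rho(\gggg_{ijk})\lambda_{ik}=\lambda_{ij}\lambda_{jk}$ should fall out of comparing two ways of evaluating a product-then-action expression such as $(\e,x_{ij})\bullet(\e,x_{jk})\star(x_k,\e)$, using associativity of the $\bullet$-product together with the compatibility \eqref{eq:chi} that relates $\chi$, $\rho$, and the kernel action; the term $\rho(\gggg_{ijk})$ arises precisely because moving a kernel element $(\gggg_{ijk},x_{ik})$ across the action produces a factor $\rho(\gggg_{ijk})$ by Remark \ref{rmk:remark_on_adapted}. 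The second relation is the associativity constraint of $\bullet$ on triple products $(\e,x_{ij})\bullet(\e,x_{jk})\bullet(\e,x_{kl})$, expanded in two bracketings; the appearance of $\lambda_{ij}(\gggg_{jkl})$ reflects that conjugating/acting the kernel element $(\gggg_{jkl},\cdots)$ by $(\e,x_{ij})$ twists it by $\jmath(\lambda_{ij})$. The normalization $\gggg_{iij}=e$ follows from the unit axiom $\epsilon(x_i)=(\,?\,,x_{ii})$ of the groupoid together with $\lambda_{ii}=\e$ (Remark \ref{rmk:lambdaii}).

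For item 2, I would run the construction in reverse: given a $1$-cocycle $(\lambda,\gggg)$, I must check that the formulas \eqref{eq:recons}, \eqref{defi: ppalstronP} and the stated $\phi,\chi$ genuinely assemble an adapted $\G\to\HH$-extension. This breaks into several independent verifications: (a) that $\bullet$ is associative and that the proposed inverse is a genuine two-sided inverse, making $\mathcal R$ a Lie groupoid — here the associativity computation is exactly where the second cocycle relation of \eqref{nonAbe} gets used, and the inverse formula is forced by solving $(g,x_{ij})\bullet(g,x_{ij})^{-1}=\epsilon(x_i)$; (b) that $\star$ defines a left action commuting with the trivial right $\HH$-action, so that $P$ is a principal $\HH$-bundle over $\mathcal R\toto\coprod U_i$ — the action axiom $(r\bullet r')\star p=r\star(r'\star p)$ again consumes the first cocycle relation to cancel the cross-terms $\lambda_{ij}\lambda_{jk}$ against $\rho(\gggg_{ijk})\lambda_{ik}$; and (c) that $(\chi,\jmath)$ is a bundle morphism satisfying \eqref{eq:chi}, which is essentially the content of Remark \ref{rmk:remark_on_adapted} read in this direction. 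I also need to confirm all four constraints of Definition \ref{definition of Adapted extension} hold, which is immediate from the shapes of the formulas (the kernel product reduces to $(g,x_{ii})\bullet(g',x_{ij})=(gg',x_{ij})$ precisely because $\lambda_{ii}=\e$ and $\gggg_{iij}=e$).

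For item 3 I verify that the two procedures are mutually inverse. One direction substitutes \eqref{eq:recons}–\eqref{defi: ppalstronP} into \eqref{eq:cons} and checks that one recovers the original $\lambda,\gggg$ — this is a direct evaluation, e.g. $(\e,x_{ij})\star(x_j,\e)=(x_i,\rho(\e)\lambda_{ij}\e)=(x_i,\lambda_{ij})$ and similarly $(\e,x_{ij})\bullet(\e,x_{jk})=(\e\cdot\lambda_{ij}(\e)\gggg_{ijk},x_{ik})=(\gggg_{ijk},x_{ik})$. The reverse direction starts from an arbitrary adapted extension, extracts $(\lambda,\gggg)$ by item 1, reconstructs a product and action by item 2, and argues these coincide with the original $\bullet,\star$; the key observation is that Remark \ref{rmk:remark_on_adapted} together with the two relations of \eqref{eq:cons} pins down the value of $\bullet$ and $\star$ on \emph{all} elements, not just on those of the special form appearing in \eqref{eq:cons}, because a general element $(g,x_{ij})$ factors as $(g,x_{ii})\bullet(\e,x_{ij})$ and a general action reduces via the kernel action of Remark \ref{rmk:remark_on_adapted}.

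The main obstacle I anticipate is the bookkeeping in the associativity and action-axiom computations: the left-action convention for $\jmath$ (flagged in Section \ref{sec:prerequisites}) together with the two crossed-module axioms must be applied in exactly the right order so that the $\lambda$-twists $\jmath(\lambda_{ij})(\gggg_{jkl})$ and the $\rho$-factors land where \eqref{nonAbe} predicts. The conceptual content is light, but I expect that getting the placement of conjugations and the distinction between $\G$-elements and $\HH$-elements correct — especially verifying that \eqref{eq:chi} is compatible with the reconstructed $\chi$ and that the inverse in \eqref{eq:recons} really inverts — is where an error could most easily creep in, so I would carry out those two checks most carefully.
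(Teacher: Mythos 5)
Your proposal is correct and follows essentially the same route as the paper: extract the first cocycle relation from the mixed associativity $((\e,x_{ij})\bullet(\e,x_{jk}))\star(x_k,\e)$ using Remark \ref{rmk:remark_on_adapted}, the second from associativity of $\bullet$, reconstruct via \eqref{eq:recons}--\eqref{defi: ppalstronP}, and pin down uniqueness by factoring $(g,x_{ij})=(g,x_{ii})\bullet(\e,x_{ij})$. The one point you assert rather than derive --- that conjugating a kernel element by $(\e,x_{ij})$ twists it by $\jmath(\lambda_{ij})$ --- is exactly the paper's Lemma \ref{key lemma}, whose proof is the only place in item 1 where the equivariance of $\chi$ as a morphism of principal bundles over groupoids is genuinely needed, so make sure to carry out that derivation explicitly.
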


The proof will go through a lemma.

\begin{lem} \label{key lemma}
Let $(\mathcal U,\bullet, \star) $ be an adapted Lie groupoid $\G\to\HH$-extension of the \v{C}ech groupoid $\Cechs $. Define the maps $\lambda_{ij}:U_{ij} \to \HH$ and $\gggg_{ijk}:U_{ijk} \to \G $ as in (\ref{eq:cons}). Then the following relation holds for all $i,j \in I, g\in \G$ and $x \in U_{ij}$:
 $$(e,x_{ij}) \bullet (g,x_{jj})=(\lambda_{ij}(g), x_{ij})$$
\end{lem}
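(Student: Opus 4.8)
The plan is to compute the groupoid product $(e,x_{ij}) \bullet (g,x_{jj})$ directly by expressing both factors in terms of the generating elements $(\e,x_{ij})$ and kernel elements $(g,x_{jj})$, and then using the structural constraints of Definition \ref{definition of Adapted extension} together with the first line of (\ref{eq:cons}). The key observation is that the element $(g,x_{jj})$ lies in the kernel $K_{x_j} = \G \times U_{jj}$, and item 4 of Definition \ref{definition of Adapted extension} tells us precisely how kernel elements multiply on the left: $(g,x_{jj}) \gpdproductR (g',x_{jk}) = (gg',x_{jk})$. So the whole difficulty is to move the kernel element $(g,x_{jj})$ across the generator $(e,x_{ij})$, which is exactly the kind of conjugation/twisting governed by the band action and the crossed-module action $\jmath$.

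First I would note that, since $\bullet$ is the groupoid product of an adapted extension and the kernel is normal (as recalled right after the definition of Lie groupoid extension), the product $(e,x_{ij}) \bullet (g,x_{jj})$ is a composable product landing over $x_{ij}$, hence of the form $(?, x_{ij})$ for some element of $\G$. To pin down the $\G$-component, I would exploit the band action (\ref{def:def of gpdactband}): conjugation of a kernel element by $(e,x_{ij})$ realizes the band action of $(e,x_{ij})$ on the band element over $x_j$. Concretely, the natural strategy is to compute $(e,x_{ij}) \bullet (g,x_{jj}) \bullet (e,x_{ij})^{-1}$, recognize it as a kernel element over $x_i$, and identify its $\G$-component via $\chi$ and the defining relation (\ref{eq:chi}); alternatively, one massages the product using the associativity of $\bullet$ together with the already-established cocycle relation $(\e,x_{ij}) \star (x_j,\e) = (x_i,\lambda_{ij})$ from the first line of (\ref{eq:cons}).

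The cleanest route, I expect, is to bring in the principal bundle action $\star$ as a probe. By the defining relation (\ref{eq:chi}) of a $\G \to \HH$-extension and Remark \ref{rmk:remark_on_adapted}, the action of a kernel element $(g,x_{jj})$ on $P$ is $(x_j,h) \mapsto (x_j,\rho(g)h)$, while the action of the generator $(\e,x_{ij})$ sends $(x_j,\e)$ to $(x_i,\lambda_{ij})$ by (\ref{eq:cons}). Applying the composite $(e,x_{ij}) \bullet (g,x_{jj})$ to the element $(x_j,\e) \in P$ and using the action axiom $b \cdot (a \cdot x) = (b \bullet a)\cdot x$, I would compute on one side $(e,x_{ij}) \star \big((g,x_{jj}) \star (x_j,\e)\big) = (e,x_{ij}) \star (x_j,\rho(g))$, and on the other side evaluate what $\big((e,x_{ij}) \bullet (g,x_{jj})\big) \star (x_j,\e)$ must be if the product equals $(\lambda_{ij}(g),x_{ij})$. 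Matching the two expressions, and using the first crossed-module axiom $\rho(h(\fatg)) = h\rho(\fatg)h^{-1}$ to convert $\lambda_{ij}\,\rho(g)$ into $\rho(\lambda_{ij}(g))\,\lambda_{ij}$, forces the $\G$-component to be exactly $\lambda_{ij}(g)$.

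The main obstacle will be keeping the bookkeeping of left-versus-right actions consistent: the $\HH$-action on $P$ is a right action while the $\mathcal R$-action is a left action, and the crossed-module convention here makes $\jmath$ a left action of $\HH$ on $\G$, so the order in which $\rho(g)$, $\lambda_{ij}$ and $\lambda_{ij}(g) := \jmath(\lambda_{ij})(g)$ appear is delicate. I do not anticipate any genuine difficulty beyond this careful tracking — once the composite action is expanded and the crossed-module axiom is applied to relate $\lambda_{ij}\,\rho(g)$ to $\rho(\lambda_{ij}(g))\,\lambda_{ij}$, the identity $(e,x_{ij}) \bullet (g,x_{jj}) = (\lambda_{ij}(g),x_{ij})$ falls out, because the free and transitive character of the right $\HH$-action on the fibers of $P$ lets one read off the $\G$-component unambiguously.
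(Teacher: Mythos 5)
Your setup is fine (the product is indeed of the form $(g',x_{ij})$ for some $g'\in\G$, since $\varphi$ is a groupoid morphism onto $\Cechs$), but the probe you commit to --- acting with $(e,x_{ij})\bullet(g,x_{jj})$ on the point $(x_j,\e)\in P$ --- cannot close the argument. Carrying out your computation: the right-hand side gives $(e,x_{ij})\star\bigl((g,x_{jj})\star(x_j,\e)\bigr)=(x_i,\lambda_{ij}\rho(g))=(x_i,\rho(\lambda_{ij}(g))\lambda_{ij})$, while writing $(e,x_{ij})\bullet(g,x_{jj})=(g',x_{ij})$ and using item 4 of definition \ref{definition of Adapted extension} together with remark \ref{rmk:remark_on_adapted} gives $(g',x_{ij})\star(x_j,\e)=(x_i,\rho(g')\lambda_{ij})$. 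The free and transitive $\HH$-action on the fibers of $P$ then yields only $\rho(g')=\rho(\lambda_{ij}(g))$, which determines $g'$ up to $\ker\rho$ --- not unambiguously, as you claim. Since $\rho$ is not assumed injective (for $\G\to\Aut(\G)$ its kernel is the center of $\G$; for $S^1\to pt$, or more generally $\G\to 1$ with $\G$ abelian, the $P$-probe carries no information about the $\G$-component at all), this is a genuine gap, not bookkeeping.

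The missing ingredient is the one you mention in passing but do not use: the equivariance of $\chi$ itself, i.e.\ $\chi(\gamma\star p)=\gamma\gpdactband\chi(p)$ from definition \ref{defi:isomppalbundles}. This is what the paper exploits. Computing $\chi\bigl((e,x_{ij})\star(x_j,\e)\bigr)(g)$ two ways --- once as $\chi(x_i,\lambda_{ij})(g)=(\lambda_{ij}(g),x_{ii})$ via item 3 of definition \ref{definition of Adapted extension}, once as $\bigl((e,x_{ij})\gpdactband\chi(x_j,\e)\bigr)(g)=(e,x_{ij})\bullet(g,x_{jj})\bullet(e,x_{ij})^{-1}$ via (\ref{def:def of gpdactband}) --- produces an identity of elements of the kernel $K\subset\mathcal R$, where the $\G$-component is read off directly from item 1, with no passage through $\rho$. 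Multiplying on the right by $(e,x_{ij})$ and invoking item 4 then gives the lemma. Note also that your alternative of identifying the conjugate via the relation (\ref{eq:chi}) would suffer from the same $\ker\rho$ ambiguity, since (\ref{eq:chi}) also only sees $\G$ through its action on $P$.
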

\begin{proof}
First observe that for all $i,j \in I, g\in \G$ and $x \in U_{ij}$ we have
\begin{equation*}
\begin{array}{rcll}
\chi((e,x_{ij}) \star (x_{j},e))(g) &=& \chi(x_{i},\lambda_{ij})(g) & \hbox{by (\ref{eq:cons}), i.e. definition of $\lambda_{ij}$}\\
                                  &=& (\lambda_{ij}(g),x_{ii}) & \hbox{by definition \ref{definition of Adapted extension} item 3}.
\end{array}
\end{equation*}
On the other hand,
\begin{equation*}
\begin{array}{rcll}
                                 && \chi((e,x_{ij})\star (x_{j},e))(g)\\
                                  &=& ((e,x_{ij}) \gpdactband \chi(x_{j},e)) \, (g) & \hbox{$\chi$ is a morphism of ppal bundles over grpds}\\
                                  &=& (e,x_{ij}) \bullet ( g,x_{jj})\bullet (e,x_{ij})^{-1}, &\hbox{by def of $\gpdactband$ i.e. \ref{actAutonBand} }
\end{array}
\end{equation*}
for all $i,j \in I, g\in \G$ and $x \in U_{ij}$.
%
Multiplying on the right of both sides of the last two relations by $ (e,x_{ij}) $ and using item $4$ in definition \ref{definition of Adapted extension} yield the desired relation.
%
\end{proof}

\begin{proof}(of proposition \ref{prop:cocycles}).\emph{1}) We first prove that the maps defined in item $1$ form a non-Abelian $1$-cocycle. The relation $\gggg_{ijj}=e$ is obtained by putting $j=k$ in the second relation of (\ref{eq:cons}).
 By definition of groupoid action, we have
 \begin{equation}\label{associativity}
 ((e,x_{ij})\bullet (e,x_{jk}))\star (x_{k},e)=(e,x_{ij})\star((e,x_{jk})\star(x_ k,e))
\end{equation}
for all indices $i,j,k$ and all $x\in U_{ijk}$. The LHS of (\ref{associativity}) gives:
\begin{equation*}
\begin{array}{rcll}\label{LHS}
\hbox{LHS of} \,(\ref{associativity}) & = & (\gggg_{ijk},x_{ik})\star(x_{k},e) & \hbox{by (\ref{eq:cons}), i.e. def. of $\gggg_{ijk}$}              \\
                           & = & ((\gggg_{ijk},x_{ii}) \bullet (e,x_{ik}))\star(x_{k},e)  & \hbox{by def. \ref{definition of Adapted extension}, item 4}\\
                            & = & (\gggg_{ijk},x_{ii}) \star ((e,x_{ik})\star(x_{k},e))  & \hbox{by axioms of groupoid action}\\
                           & = & (\gggg_{ijk},x_{ii}) \star (x_{i},\lambda_{ik}) &\hbox{by (\ref{eq:cons}), i.e. def. of $\lambda_{ik}$}  \\
                           & = & (x_{i},\rho(\gggg_{ijk})\lambda_{ik}) & \hbox{by remark \ref{rmk:remark_on_adapted}}
\end{array}
\end{equation*}
while the RHS of (\ref{associativity}) gives:
\begin{equation*}
\begin{array}{rcll}\label{RHS}
\hbox{RHS of}\, (\ref{associativity}) & = &(e,x_{ij})\star (x_{j},\lambda_{jk}) & \hbox{by (\ref{eq:cons}), i.e. def. of $\lambda_{jk}$} \\
 & = & (e,x_{ij})\star(x_{j},e)  \lambda_{jk} & \hbox{by def. \ref{definition of Adapted extension}, item 2,} \\
                                       & = & (x_{i},\lambda_{ij})\lambda_{jk} & \hbox{by def. of $\lambda_{ij}$} \\
                                       & = & (x_{i},\lambda_{ij}\lambda_{jk}) & \hbox{by (\ref{eq:cons}), i.e. def. \ref{definition of Adapted extension}, item 2}
\end{array}
\end{equation*}
Comparing these relations, we obtain the first condition of (\ref{nonAbe}).
 To show that the henceforth constructed families $(\lambda_{ij})_{i,j \in I}$ and $(\gggg_{ijk})_{i,j,k \in I}$ satisfy the second condition of (\ref{nonAbe}), we write the associativity condition of the Lie groupoid multiplication of $\mathcal{R} $ as follows:
\begin{equation}\label{associativity 2}
((e,x_{ij})\bullet(e,x_{jk}))\bullet(e,x_{kl})=(e,x_{ij})\bullet((e,x_{jk})\bullet(e,x_{kl}))
\end{equation}
for all indices $i,j,k,l \in I$ and $x\in U_{ijkl}$. The LHS of (\ref{associativity 2}) amounts to:
\begin{equation*}
\begin{array}{rcll}
\hbox{LHS of } (\ref{associativity 2})& = & (\gggg_{ijk},x_{ik})\bullet(e,x_{kl}) & \hbox{by (\ref{eq:cons}), i.e. definition of $\gggg_{ijk}$}\\
                                     & = & ((\gggg_{ijk},x_{ii})\bullet(e,x_{ik}))\bullet(e,x_{kl}) & \hbox{by definition \ref{definition of Adapted extension}, item 4}\\
                                     & = & (\gggg_{ijk},x_{ii})\bullet((e,x_{ik})\bullet(e,x_{kl})) & \hbox{(by associativity of the gpd product)}\\
                                     & = & (\gggg_{ijk},x_{ii})\bullet(\gggg_{ikl},x_{il})   & \hbox{by (\ref{eq:cons}), i.e. definition of $\gggg_{ikl}$ }\\
                                     & = & (\gggg_{ijk}\gggg_{ikl},x_{il})            & \hbox{by definition \ref{definition of Adapted extension}, item 4}
\end{array}
\end{equation*}
while the RHS of (\ref{associativity 2}) gives
 \begin{equation*}
 \begin{array}{rcll}
  \hbox{RHS of } (\ref{associativity 2}) & = & (e,x_{ij})\bullet(\gggg_{jkl},x_{jl}) & \hbox{by  (\ref{eq:cons}), i.e. definition of $\gggg_{jkl}$ } \\
                                       & = & (e,x_{ij})\bullet(\gggg_{jkl},x_{jj}) \bullet (e,x_{jl})  &\hbox{by definition \ref{definition of Adapted extension}, item 4}\\
                                       & = & (\lambda_{ij}(\gggg_{jkl}),x_{ij}) \bullet (e,x_{jl})               & \hbox{by lemma \ref{key lemma}}\\
                                       & = & (\lambda_{ij}(\gggg_{jkl}),x_{ii}) \bullet (e,x_{ij})\bullet(e,x_{jl})     &\hbox{by definition \ref{definition of Adapted extension}, item 4}\\
                                       & = & (\lambda_{ij}(\gggg_{jkl}),x_{ii}) \bullet (\gggg_{ijl},x_{il})     & \hbox{by (\ref{eq:cons}), i.e. definition of $\gggg_{ijl}$ }\\
                                       & = & (\lambda_{ij}(\gggg_{jkl})\gggg_{ijl},x_{il})            & \hbox{by definition \ref{definition of Adapted extension}, item 4}
 \end{array}
 \end{equation*}
Comparing these relations, we obtain the second condition of (\ref{nonAbe}), which completes the proof of the first item.

\bigskip
\noindent
\emph{2}   We need to check that the multiplication\, $\bullet$ defined in (\ref{eq:recons}) is a Lie groupoid multiplication. We first prove the associativity:
For all $i,j,k \in I, x \in U_{ijkl}$, $g,g',g'' \in \G$, we compute:
\begin{equation*}
\begin{array}{rcll} & & (g,x_{ij})\bullet ((g',x_{jk}) \bullet(g'',x_{kl})) & \\
 & = & (g,x_{ij}) \bullet (g'\lambda_{jk}( g'')\gggg_{jkl},x_{jl})   &  \hbox{by (\ref{eq:recons}), i.e. def. of $\bullet$}\\
                                    & = & (g\lambda_{ij}(g'\lambda_{jk}( g'')\gggg_{jkl})\gggg_{ijl}, x_{il}) & \hbox{by (\ref{eq:recons}), i.e. def. of$\bullet$}\\
                                    & = & (g\lambda_{ij}( g')\lambda_{ij}(\lambda_{jk}( g''))\lambda_{ij}(\gggg_{jkl})\gggg_{ijl}, x_{il}) & \hbox{by crossed modules axioms}\\
                                    & = & (g\lambda_{ij}( g')(\rho(\gggg_{ijk})\lambda_{ik}( g''))\lambda_{ij} (\gggg_{jkl})\gggg_{ijl},x_{il}) & \hbox{by (\ref{nonAbe}) in definition \ref{def:nonab}}\\
                                    & = & (g\lambda_{ij}( g')\gggg_{ijk}\lambda_{ik}(g'')\gggg_{ijk}^{-1}\lambda_{ij} (\gggg_{jkl})\gggg_{ijl},x_{il}) &\hbox{by crossed module axioms}\\
                                    & = & (g\lambda_{ij} (g')\gggg_{ijk}\lambda_{ik}(g'') \gggg_{ikl},x_{il}) & \hbox{by (\ref{nonAbe}) in definition \ref{def:nonab}}\\
                                    & = & (g\lambda_{ij}( g')\gggg_{ijk},x_{ik})\bullet(g'',x_{kl}) & \hbox{by (\ref{eq:recons}), i.e. def. of $\bullet$}\\
                                    & = & ((g,x_{ij}) \bullet (g',x_{jk})) \bullet (g'',x_{kl}) & \hbox{by (\ref{eq:recons}), i.e. def. of $\bullet$}.
\end{array}
\end{equation*}
 It is routine to check that the henceforth defined multiplication admits as its source map $s$ (resp. its target map $t$) the map $(g,x_{ij})\mapsto x_i$ (resp $x_j$). Also, this multiplication admits the map $\epsilon : \coprod_{i \in I} U_i \to \mathcal R$ given by $x_i \mapsto (e,x_{ii})$ as its unit map, and its inverse given as in (\ref{eq:recons}). Altogether, these structural maps endow $\mathcal R$ with a structure of Lie groupoids, and eventually turn $\mathcal R\stackrel{\phi}{\to} \coprod_{i \in I} U_{ij}$ into a Lie groupoid $\G$-extension. It is also routine to check that (\ref{defi: ppalstronP}) gives a structure of principal $\HH$-bundle over the Lie groupoid $ \mathcal R \toto \coprod_{i\in I} U_{i} $. In order to check that
 $$(\mathcal R \to N[\mathcal U], P \to \coprod_{i\in I} U_{i}, \chi)$$
 is a $\G \to \HH$-extension, we are left with the task of showing that $(\chi, \jmath)$ is a morphism of principal bundles over the identity of $\mathcal R$.
 One condition is obvious:
 $$ \chi((x_i,h)\cdot h')=\chi(x_i,hh')=(\jmath(hh'),x_i)=(\jmath(h)\jmath(h'),x_i)=(\jmath(h),x_i)\jmath(h')=\chi(x_i,h)\jmath(h')$$
  while the following proves that $p \cdot \rho(g)= \chi(p)(g)\star p$ for all $p\in P, g\in \G$, hence proves the claim:

 \begin{equation*}
 \begin{array}{rcll}
                                       & &\chi((x_i,h)\cdot h')(g)\star (x_i,h)&\hbox{}\\
                                       & = & (\jmath(h)(g),x_{ii})\star (x_i,h)& \hbox{by def. of $\chi$}\\
                                       & = & (h(g),x_{ii})\star (x_i,h) & \hbox{}\\
                                       & = & (x_i,\rho(h(g))\lambda_{ii}h) & \hbox{by (\ref{defi: ppalstronP}})\\
                                       & = & (x_i,h\rho(g)h^{-1}h)& \hbox{by crossed module axiom}\\
                                       & = & (x_i,h)\cdot \rho(g).&
 \end{array}
 \end{equation*}
 Now items 1-3 of definition \ref{definition of Adapted extension} hold by construction and item 4 holds because $\gggg_{iij}$ is assumed to be equal to the neutral element $\e$ of $\G$ in definition \ref{def:nonab}. This completes the proof of the second item.


  \bigskip
\noindent
\emph{3})
   Next, we prove that items 1 and 2 in the proposition yield constructions which are inverse one to the other. For this purpose, we first notice that (\ref{eq:recons}) and (\ref{defi: ppalstronP}) hold for any adapted Lie groupoid $\G \to \HH$-extension, hence the construction of item 2 is injective. Assume that we are given a $\G \to \HH$-valued non-Abelian $1$-cocycle $(\lambda_{ij}, \gggg_{ijk})_{i,j,k \in I}$, then applying the procedure in item 2 we obtain an adapted Lie groupoid $\G \to \HH$-extension, to which we apply the construction in item $1$ to yield a  $\G \to \HH$-valued non-Abelian $1$-cocycle $(\lambda'_{ij}, \gggg'_{ijk})_{i,j,k \in I}$. We need to show that these two non-Abelian $1$-cocycles are equal. For this, observe that, by construction in item 2, we have $(x_i, \lambda'_{ij})= (e,x_{ij}) \star (x_j, e)$ while it follows from item 1 that $(e,x_{ij}) \star (x_j, e)=(x_i,\rho(e)\lambda_{ij}e)$. These two relations together prove that $\lambda_{ij}=\lambda'_{ij}$ for all $i,j \in I$. A
similar argument proves that $\gggg_{ijk}=\gggg'_{ijk}$, hence the claim. This implies that if two adapted Lie groupoid $\G \to \HH$-extensions of the \v{C}ech groupoid have the same $\G \to \HH$-valued non-Abelian $1$-cocycles associated with, they are equal. This proves the claim.
\end{proof}

Having made explicit a one-to-one correspondence between adapted $\G \to \HH$-extensions and non-Abelian $1$-cocycles, we now prove that, under this correspondence, isomorphisms of adapted $\G \to \HH$-extensions correspond to non-Abelian coboundaries, a notion that we now introduce, following \cite{BM},\cite{Dedecker}.

\begin{defi}\label{defi:cobound}
Let $ {\mathcal U}=(U_i)_{i \in I}$ be an open covering of a manifold $N$
and $\G \to \HH$ be a crossed module of Lie groups.
 A \emph{$\G \to \HH$-valued $1$-coboundary} is a pair $(r,\vvvv) \in  \mathcal C ^{\infty}( \coprod_{i,j \in I}U_{ij},\HH ) \times \mathcal C ^{\infty} (
\coprod_{i,j,k \in J}U_{ijk},\G) $. We say that a $\G \to \HH$-valued $1$-coboundary  $ (r,\vvvv) $,  \emph{relates} two non-Abelian $1$-cocycles  $(\lambda,\gggg) $ and $(\lambda',\gggg') $ if,
\begin{equation}\label{eq:relate}   \left\{ \begin{array}{rcll}
                   \lambda_{ij}'      &=&  \rho(\vvvv_{ij})   r_i
                    \lambda_{ij}  r_j^{-1}, & (*) \\
\gggg_{ijk}'  \vvvv_{ik} &=& \lambda_{ij}' (\vvvv_{jk}) \vvvv_{ij}
                   r_i(\gggg_{ijk}),  & (**) \\ \end{array}
\right. \end{equation}
for all possible indices. We  recall that $r_i, \vvvv_{ij}$ stand for the restriction of non-Abelian $1$-coboundary  $(r,\vvvv)$ to the intersection $U_{ij}$.
\end{defi}

The next proposition relates coboundaries and isomorphisms of adapted extensions which generalizes the results of \cite{BL} to arbitrary crossed modules.

\begin{prop}\label{prop:coboundaries}  Let $(\mathcal U, \bullet,\star )$ and $(\mathcal U, \bullet',\star')$ be two adapted Lie groupoid $\G \to \HH$-extensions of $\Cechs$.
Let $(\lambda,\gggg) $ and $(\lambda',\gggg')$ be the $\G \to \HH$-valued  non-Abelian $1$-cocycles w.r.t. ${\mathcal U}$
  associated with the adapted Lie groupoid $\G\to\HH$-extensions $(\mathcal U, \bullet,\star )$ and $(\mathcal U, \bullet',\star')$, respectively (as in proposition \ref{prop:cocycles}).
Then, the following construction defines a one-to-one correspondence between
 the set of isomorphisms of Lie groupoid $\G \to \HH$-extensions of $\Cechs$
from $(\mathcal U, \bullet,\star )$
to $(\mathcal U, \bullet',\star' )$,
and the set of $\G\to\HH$-valued $1$-coboundaries relating $(\lambda,\gggg) $ and $(\lambda',\gggg')$:
\begin{enumerate}
\item
Given an isomorphism $(\Phi_{\mathcal R}, \Phi_P) $ of (adapted) Lie groupoid $\G \to \HH$-extensions of $\Cechs$ between $(\mathcal U, \bullet,\star )$ and $(\mathcal U, \bullet',\star')$, we define $r_i: U_i \to \HH$ and $\vvvv_{ij}: U_{ij} \to \G$ by:
  \begin{equation}\label{eq:isomclasses} \begin{array}{rcl} ( x_{i},r_i) &=&  \Phi_P ( x_{i},e),
  \\ (\vvvv_{ij}^{-1}, x_{ij})  &=& \Phi_{\mathcal R} (e, x_{ij}).  \end{array} \end{equation}

\item Given a $\G \to \HH$-valued $1$-coboundary $(r , \vvvv) $ such that relates the non-Abelian $1$-cocycles
$(\lambda,\gggg) $ and $(\lambda',\gggg') $, define an isomorphism of Lie groupoid $\G\to \HH$-extensions $(\Phi_{\mathcal R}, \Phi_P) $ between the corresponding adapted Lie groupoid $\G\to \HH$-extensions $(\mathcal U, \bullet,\star )$ and $(\mathcal U, \bullet',\star')$ as follows:
  \begin{equation} \label{eq:def_PhiX}
  \Phi_\mathcal R (g,x_{ij}) =   (r_i (g) \vvvv_{ij}^{-1}, x_{ij} ), \quad \quad \mbox{for all}\quad i,j \in I, x \in U_{ij}, g \in \G
  \end{equation}
  and
    \begin{equation} \label{eq:def_PhiP}
    \Phi_P (x_{i},h) = (x_i ,  r_i h),\quad \quad \mbox{for all}\quad i,j \in I, x \in U_{ij}, g \in \G.
    \end{equation}
  \end{enumerate}
\end{prop}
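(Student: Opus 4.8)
The plan is to prove the bijection in two movements: first a \emph{rigidity} step showing that any isomorphism of adapted extensions is forced into the form (\ref{eq:def_PhiX})--(\ref{eq:def_PhiP}), so that constructions~1 and~2 are automatically inverse to one another on the level of the underlying data; and second a matching of the two structural axioms of an isomorphism with the two relations $(*),(**)$ of (\ref{eq:relate}).

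For the rigidity step I would argue as follows. By definition \ref{defi:isomppalbundles} the maps $\Phi_{\mathcal R}$ and $\Phi_P$ cover the identity of $\Cechs$ and of $\coprod_i U_i$ respectively, so both preserve the \v{C}ech component; writing $\Phi_P(x_i,h)=(x_i,G_i(x,h))$ and using the \emph{trivial} right $\HH$-action of item~2 of definition \ref{definition of Adapted extension}, equivariance forces $G_i(x,h)=G_i(x,e)\,h$, which is exactly (\ref{eq:def_PhiP}) with $r_i:=G_i(x,e)$ as in (\ref{eq:isomclasses}). To pin down $\Phi_{\mathcal R}$ I would evaluate the band square (\ref{diagram for isomorphism}) on $(x_i,e)$: using item~3 of definition \ref{definition of Adapted extension} for $\chi$ and $\chi'$ this gives $\Phi_{\mathcal R}(g,x_{ii})=(r_i(g),x_{ii})$ on the kernel, and then the factorization $(g,x_{ij})=(g,x_{ii})\bullet(e,x_{ij})$ (item~4 of definition \ref{definition of Adapted extension}) together with multiplicativity of $\Phi_{\mathcal R}$ and the definition $(\vvvv_{ij}^{-1},x_{ij}):=\Phi_{\mathcal R}(e,x_{ij})$ yields (\ref{eq:def_PhiX}). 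Since plugging these formulas back into (\ref{eq:isomclasses}) returns $(r,\vvvv)$ (note $r_i(e)=e$), constructions~1 and~2 are mutually inverse as maps of data, and the only remaining content is that the two subsets --- isomorphisms and coboundaries --- correspond.

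Next I would read off the coboundary relations by specializing the two axioms to $g=g'=e$. Equivariance of $\Phi_P$ for the groupoid action, $\Phi_P\big((e,x_{ij})\star(x_j,e)\big)=\Phi_{\mathcal R}(e,x_{ij})\star'\Phi_P(x_j,e)$, evaluated through (\ref{eq:cons}) and the reconstructed action formula (\ref{defi: ppalstronP}) for $\star'$, gives $r_i\lambda_{ij}=\rho(\vvvv_{ij}^{-1})\lambda_{ij}'r_j$, i.e. relation $(*)$. Likewise multiplicativity, $\Phi_{\mathcal R}\big((e,x_{ij})\bullet(e,x_{jk})\big)=\Phi_{\mathcal R}(e,x_{ij})\bullet'\Phi_{\mathcal R}(e,x_{jk})$, evaluated through (\ref{eq:cons}) and the reconstructed product (\ref{eq:recons}) for $\bullet'$, gives $r_i(\gggg_{ijk})\vvvv_{ik}^{-1}=\vvvv_{ij}^{-1}\lambda_{ij}'(\vvvv_{jk}^{-1})\gggg_{ijk}'$, which rearranges (using that $\jmath(\lambda_{ij}')$ is a homomorphism) into relation $(**)$. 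This proves that construction~1 lands in coboundaries.

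For the converse I would start from a coboundary $(r,\vvvv)$, first noting that $(**)$ at $i=j=k$ forces the normalization $\vvvv_{ii}=e$, and then verify that (\ref{eq:def_PhiX})--(\ref{eq:def_PhiP}) define a genuine isomorphism of $\G\to\HH$-extensions: unit- and source/target-preservation of $\Phi_{\mathcal R}$ are immediate; commutativity of the band square (\ref{diagram for isomorphism}) follows from $\vvvv_{ii}=e$ and $\jmath$ being a homomorphism; compatibility of $\Phi_P$ with the $\HH$-action is automatic from the trivial action; and invertibility is clear since $\jmath(r_i)$ is an automorphism of $\G$. The substantive point --- and the step I expect to be the main obstacle --- is the full multiplicativity of $\Phi_{\mathcal R}$ on \emph{arbitrary} elements $(g,x_{ij}),(g',x_{jk})$, not merely the $g=g'=e$ case used above. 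Expanding both sides through (\ref{eq:recons}) and distributing $\jmath(r_i)$ over products reduces the identity, after cancelling the common prefactor and substituting $(**)$, to the single equation $\vvvv_{ij}\,\jmath(r_i\lambda_{ij})(g')\,\vvvv_{ij}^{-1}=\jmath(\lambda_{ij}'r_j)(g')$; this is where the crossed-module axioms are essential, since axiom~2 identifies the left-hand conjugation with $\jmath(\rho(\vvvv_{ij}))$, whereupon the equation becomes precisely $\jmath$ applied to relation $(*)$. The careful bookkeeping of the left-action convention of $\jmath$ and the systematic use of the kernel factorization of item~4 of definition \ref{definition of Adapted extension}, exactly in the spirit of lemma \ref{key lemma}, is the delicate part; everything else is routine once the rigidity step has fixed the shape of the maps.
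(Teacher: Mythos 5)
Your outline follows essentially the same route as the paper: the rigidity argument (trivial right $\HH$-action pins down $\Phi_P$, the band square pins down $\Phi_{\mathcal R}$ on the kernel, and the factorization $(g,x_{ij})=(g,x_{ii})\bullet(e,x_{ij})$ then forces (\ref{eq:def_PhiX})), the derivation of $(*)$ and $(**)$ from equivariance and multiplicativity at $g=g'=e$, and the reduction of full multiplicativity of $\Phi_{\mathcal R}$ to $\jmath$ applied to $(*)$ via the second crossed-module axiom are all exactly the paper's computations, just reorganized with the rigidity step placed first (which makes the bijectivity cleaner, and your observation that $(**)$ at $i=j=k$ forces $\vvvv_{ii}=e$ supplies a normalization the paper uses silently in its ``direct computation'' for the band square).

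One verification is missing from your checklist in the converse direction: compatibility of $\Phi_P$ with the \emph{left groupoid action}, i.e.\ $\Phi_P\big((g,x_{ij})\star(x_j,h)\big)=\Phi_{\mathcal R}(g,x_{ij})\star'\Phi_P(x_j,h)$ for arbitrary $g$ and $h$. This is part of Definition \ref{defi:isomppalbundles} and is not the same as the right $\HH$-equivariance you dismiss as ``automatic from the trivial action''; the paper treats it as its condition (b). It is not hard --- expanding both sides through (\ref{defi: ppalstronP}) reduces it to $\rho\big(r_i(g)\vvvv_{ij}^{-1}\big)\lambda'_{ij}r_j=r_i\,\rho(g)\,\lambda_{ij}$, which follows from relation $(*)$ together with the \emph{first} crossed-module axiom $\rho(r_i(g))=r_i\rho(g)r_i^{-1}$ --- but it does require that extra axiom, which appears nowhere else in your argument, so it should be stated rather than absorbed into ``routine.''
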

\begin{proof}
1) First we prove that given an isomorphism of Lie groupoid $\G \to \HH$-extensions $(\Phi_{\mathcal R}, \Phi_P)$ between the adapted Lie groupoid $\G \to \HH$-extensions $(\mathcal U, \bullet,\star )$
and $(\mathcal U, \bullet',\star' )$, by following the construction in item 1 we obtain a $\G \to \HH$-valued $1$-coboundary. For this we need
 to prove that the pair $(r , \vvvv)$ obtained as in (\ref{eq:isomclasses})
satisfy relations (\ref{eq:relate}). We first prove the first of those relations, by
exploiting the fact that $(\Phi_{\mathcal R},\Phi_P, id_{\HH})$ is a morphism of principal bundles over Lie groupoids (see Definition \ref{defi:isomppalbundles}), which amounts to:
\begin{equation}\label{eq:cobound1}  \Phi_P (  (e,x_{ij}) \star (x_j,e) )  =
\Phi_\mathcal R (  (e,x_{ij}))  \star' \Phi_P ( (x_j,e) ), \quad \quad \forall ij \in I , x \in U_{ij}.
\end{equation}
The LHS of (\ref{eq:cobound1}) is given by
\begin{equation*}
\begin{array}{rcll}
\Phi_P (  (e,x_{ij}) \star (x_j,e) ) & = & \Phi_P (x_{i},\lambda_{ij})  & \hbox{by (\ref{eq:cons}), i.e. definition of $\lambda_{ij}$}\\
                                              & = & \Phi_P (x_{i},e) \cdot \lambda_{ij} & \hbox{$\Phi_P$ being a $\HH$-ppal bundle morphism}\\
                                              & = & (x_i,r_i) \cdot \lambda_{ij} & \hbox{by (\ref{eq:isomclasses}), i.e. definition of $r_i$}\\
                                              & = & ( x_i , r_i\lambda_{ij}) & \hbox{by definition \ref{definition of Adapted extension}, item 2.} \\
 \end{array}
 \end{equation*}
While the RHS of (\ref{eq:cobound1}) is given by
\begin{equation*}
\begin{array}{rcll}
                             & & \Phi_\mathcal R   (e,x_{ij})  \star' \Phi_P  (x_j,e) & \\
                            & = & (\vvvv_{ij}^{-1},x_{ij}) \star' (x_j, r_j) & \hbox{by  (\ref{eq:isomclasses}), i.e. def. of $\vvvv_{ij}^{-1}$  and $r_j$ }\\
                             & = & (( \vvvv_{ij}^{-1}, x_{ii}) \bullet'  (e,x_{ij})) \star' (x_j, e)\cdot r_j &  \hbox{by def. \ref{definition of Adapted extension}  item 2 and 4} \\
                              & = &( \vvvv_{ij}^{-1}, x_{ii}) \star' ( x_i, \lambda'_{ij}) \cdot r_j & \hbox{by  (\ref{eq:cons}), i.e. def. of $\lambda_{ij}'$}\\
                              & = & ( \vvvv_{ij}^{-1}, x_{ii}) \star' ( x_i, \lambda'_{ij} r_j) & \hbox{by def. \ref{definition of Adapted extension}  item 2}\\
                             & = & ( x_i, \rho(\vvvv_{ij}^{-1})\lambda'_{ij} r_j ) & \hbox{by (\ref{defi: ppalstronP})}
\end{array}
\end{equation*}

Comparing the LHS and RHS of (\ref{eq:cobound1}), we get
$$r_i\lambda_{ij} = \rho(\vvvv_{ij}^{-1})\lambda'_{ij} r_j$$ or $$\lambda'_{ij}= \rho(\vvvv_{ij})r_i\lambda_{ij}  r_j^{-1},$$
 which is the first relation of (\ref{eq:relate}).
Before proving the second relation of (\ref{eq:relate}), we need to explore the consequences of the commutativity of the diagram displayed in (\ref{diagram for isomorphism}).
It follows from item 3 in definition \ref{definition of Adapted extension} that $ \chi((x_i,e))$ is the element in the band given by $\chi((x_i,e))(g)=  ( g, x_{ii} ) $, so that $ \overline{\Phi_\mathcal R}(\chi((x_i,e))) $ is by definition the element of the band given by $g \mapsto \Phi_\mathcal R((g,x_{ii})) $.
Now, $\Phi_P((x_i,e))= (r_i,e) $ by (\ref{eq:isomclasses}), i.e. definition of $r_i$, so that $\chi'( \Phi_P((x_i,e)) ) $ is the element of the band given by
$g \mapsto (r_i(g),x_{ii}) $, by item (3) of definition of adapted extensions again. The commutativity of diagram (\ref{diagram for isomorphism}) can therefore be expressed by meaning that the next relation holds for all $g \in \G$:
 \begin{equation}\label{eq:PhiX} \Phi_\mathcal R(g,x_{ii}) = (r_i(g),x_{ii}) .\end{equation}
 Exploiting the assumption that $\Phi_\mathcal R$ is a Lie groupoid morphism, we can derive a more general formula as follows
\begin{equation}  \label{eq:cobound2}
\begin{array}{rcll}
\Phi_\mathcal R(g,x_{ij})&=&\Phi_{\mathcal R}((g,x_{ii})\bullet (e,x_{ij}))& \hbox{by definition \ref{definition of Adapted extension} item 4}\\
   &=&\Phi_\mathcal R(g , x_{ii} )\bullet'\Phi_\mathcal R(e,x_{ij})& \hbox{$\Phi_{\mathcal R}$ being a Lie groupoid morphism}\\
                &=&(r_i(g) , x_{ii} )\bullet'\Phi_\mathcal R(e,x_{ij})& \hbox{by (\ref{eq:PhiX})}\\
                &=&(r_i(g) , x_{ii} )\bullet'(\vvvv_{ij}^{-1}, x_{ij})& \hbox{by (\ref{eq:isomclasses}) definition of $ \vvvv_{ij}$}\\
                &=&(r_i(g)\vvvv_{ij}^{-1}, x_{ij})& \hbox{by definition \ref{definition of Adapted extension} item 4.}
\end{array}
\end{equation}
Now, we derive the second of the relations (\ref{eq:relate}) by comparing the left and right hand sides of a relation following from the assumption that $\Phi_\mathcal R$ be a Lie groupoid morphism:
 \begin{equation}  \label{eq:cobound3}
 \Phi_\mathcal R( (e,x_{ij}) \bullet (e,x_{jk}) ) = \Phi_{\mathcal R}  ((e,x_{ij})) \bullet' \Phi_\mathcal R ((e,x_{jk})),
 \end{equation}
a computation that goes as follows:
\begin{equation*}
\begin{array}{rcll}
                            & & \Phi_\mathcal R ( (e,x_{ij}) \bullet (e,x_{jk}) )& \\
                            & = & \Phi_\mathcal R ( \gggg_{ijk} ,x_{ik} )  & \hbox{by (\ref{eq:cons}), i.e. definition of $\gggg_{ijk}$ }\\
                            & = &    (r_i(\gggg_{ijk})\vvvv_{ik}^{-1},x_{ik}) & \hbox{by (\ref{eq:cobound2})},  \\
\end{array}
\end{equation*}
while the RHS of (\ref{eq:cobound3}) is:
\begin{equation*}
\begin{array}{rcll}
 & & \Phi_\mathcal R  ((e,x_{ij})) \bullet' \Phi_\mathcal R ((e,x_{jk}))& \\
 & =&  ( \vvvv_{ij}^{-1}, x_{ij} ) \bullet' ( \vvvv_{jk}^{-1}, x_{jk} )  & \hbox{by (\ref{eq:isomclasses}), i.e. def. of $\vvvv_{ij}$ and $\vvvv_{jk} $ } \\
  & = & (  \vvvv_{ij}^{-1}  \lambda'_{ij}(\vvvv_{jk}^{-1}) \gggg_{ijk}', x_{ik})  & \hbox{by (\ref{eq:recons})}. \\
\end{array}
\end{equation*}
Comparing the LHS  and the RHS of (\ref{eq:cobound3}) leads to
$$ r_i(\gggg_{ijk})\vvvv_{ik}^{-1} = \vvvv_{ij}^{-1}  \lambda'_{ij}(\vvvv_{jk}^{-1}) \gggg_{ijk}'
\hbox{ $\Leftrightarrow $ } \gggg_{ijk}'\vvvv_{ik}=\lambda'_{ij}(\vvvv_{jk})\vvvv_{ij}r_i(\gggg_{ijk}) ,$$
which is precisely the second relation of (\ref{eq:relate}), and completes the proof of the first item.

\bigskip
\noindent
2) Second, we prove that given a $\G \to \HH$-valued $1$-coboundary, by following the construction in item 2, we get an isomorphism of adapted Lie groupoid $\G \to \HH$-extensions. In order to show that the triple $(\Phi_\mathcal R, \Phi_P, \id_{\HH}) $ with $\Phi_\mathcal R, \Phi_P$ as in (\ref{eq:def_PhiX}) and (\ref{eq:def_PhiP}), is an isomorphism of Lie groupoid $\G \to \HH$-extensions, we need to check that (see Notation \ref{not:adaptextensions})
 \begin{enumerate}
 \item[(a)] $\Phi_\mathcal R : \mathcal R \to \mathcal R'$ is a morphism of Lie groupoids,
 \item[(b)] $\Phi_P : P \to P'$ is a morphism of principal bundles over Lie groupoids,
 \item[(c)] the following diagram commutes:
 $$ \xymatrix{P \ar [r]^{\Phi_P} \ar [d]^{\chi}& P'\ar [d]_{\chi'}\\
              Band((\mathcal R,\bullet)\to \Cechs)\ar [r]^{\bar{\Phi}_\mathcal R} & Band((\mathcal R,\bullet')\to \Cechs)} $$
              with $\bar{\Phi}_\mathcal R$ being defined as in (\ref{diagram for isomorphism})
 \end{enumerate}
We first check that condition (a) holds, i.e that $\Phi_\mathcal R( r \bullet r'  ) = \Phi_\mathcal R(r) \bullet' \Phi_\mathcal R(r') $
for arbitrary elements of the form $r=(g,x_{ij}) \in \mathcal R$ and $r'=(g',x_{jk}) \in \mathcal R$. On the one hand:
\begin{equation*}
\begin{array}{rcll}
\Phi_\mathcal R((g,x_{ij}) \bullet (g',x_{jk})) & = & \Phi_\mathcal R (g \lambda_{ij} ( g')  \gggg_{ijk}, x_{ik}) & \hbox{by (\ref{eq:recons}) in prop. \ref{prop:cocycles}}                      \\
 & = & ( r_i ( g \lambda_{ij} (g') \gggg_{ijk}) \vvvv_{ik}^{-1}, x_{ik}) & \hbox{by (\ref{eq:def_PhiX}), i.e. definition of $\Phi_\mathcal R$},
\end{array}
\end{equation*}
while on the other hand:
\begin{equation*}
\begin{array}{rcll}
  & & \Phi_\mathcal R( (g,x_{ij}) ) \bullet' \Phi_\mathcal R (g',x_{jk}) & \\
  & = &  (r_i(g)\vvvv_{ij}^{-1},x_{ij})   \bullet'  (r_j(g')\vvvv_{jk}^{-1},x_{jk}) & \hbox{by (\ref{eq:def_PhiX}), i.e. def. of $\Phi_\mathcal R$} \\
  & = & (   r_i(g)\vvvv_{ij}^{-1}  \lambda_{ij}'( r_j(g')\vvvv_{jk}^{-1}) \gggg_{ijk}' ,x_{ik}) & \hbox{by (\ref{eq:recons}) in prop.  \ref{prop:cocycles}.}             \\
\end{array}
\end{equation*}
Of course, $\Phi_\mathcal R$ is a Lie groupoid isomorphism if and only if both sides of the previous relations are equal for all $g,g' \in \G$, i.e. if and only if
\begin{equation}\label{eq:bothsides} r_i ( g \lambda_{ij} (g') \gggg_{ijk}) \vvvv_{ik}^{-1}  = r_i(g)\vvvv_{ij}^{-1}  \lambda_{ij}'( r_j(g')\vvvv_{jk}^{-1}) \gggg_{ijk}'\end{equation}
which reduces, multiplying both sides by $r_i(g^{-1})$, to require that, for all $g' \in \G $:
$$ r_i ( \lambda_{ij} (g') \gggg_{ijk}) \vvvv_{ik}^{-1}  =\vvvv_{ij}^{-1}  \lambda_{ij}'( r_j(g')\vvvv_{jk}^{-1}) \gggg_{ijk}'$$
or, equivalently:
$$r_i ( \lambda_{ij} (g') ) r_i( \gggg_{ijk} ) \vvvv_{ik}^{-1} =  \vvvv_{ij}^{-1}  \lambda_{ij}'( r_j(g')) \vvvv_{ij} \vvvv_{ij}^{-1} \lambda_{ij}'(\vvvv_{jk}^{-1})   \gggg_{ijk}' $$
 By the second relation in (\ref{eq:relate}), $ r_i( \gggg_{ijk} ) \vvvv_{ik}^{-1} =  \vvvv_{ij}^{-1} \lambda_{ij}'(\vvvv_{jk}^{-1})   \gggg_{ijk}' $,
 so that, eventually,  $\Phi_\mathcal R$ is a Lie groupoid isomorphism if and only if for all $g ' \in \G$
\begin{equation}\label{eq:*}
  r_i ( \lambda_{ij} (g') )= \vvvv_{ij}^{-1}  \lambda_{ij}'( r_j(g')) \vvvv_{ij}
\end{equation}
By axiom of crossed module the RHS of (\ref{eq:*}) is equal to
$\rho(\vvvv_{ij}^{-1})\lambda_{ij}'(r_j (g'))$
so that eventually $\Phi_\mathcal R$ is a Lie groupoid isomorphism if and only if
$$r_i \circ \lambda_{ij} \,(g')= \rho(\vvvv_{ij}^{-1}) \circ \lambda_{ij}' \circ r_j\, (g'); \forall g' \in \G ,$$
an equation which is obtained by applying $\jmath: \HH \to \Aut (\G)$ to the first relation in (\ref{eq:relate}), and is therefore true, here $\circ$ refers to the composition low of $\Aut(\G)$,
Hence, $\Phi_\mathcal R $ is a Lie groupoid isomorphism.

We wish now to check that condition (b) holds, i.e that $\Phi_P( r \star p  ) = \Phi_\mathcal R(r) \star' \Phi_P(p) $
for arbitrary elements $r=(g,x_{ij}) \in \mathcal R$ and $p=(x_i,h) \in P$. On the one hand, we compute:
\begin{equation} \label{eq:condition4}
\begin{array}{rcll}
                                          & & \Phi_P((g,x_{ij}) \star (x_{j},h)) & \\
                                          & = & \Phi_P(x_i,\rho(g)\lambda_{ij}h)  & \hbox{by (\ref{eq:recons}) in prop. \ref{prop:cocycles}}\\
                                          & = & (x_i, r_i \rho(g)\lambda_{ij}h )  & \hbox{by (\ref{eq:def_PhiP}), i.e. definition of $\Phi_P$},
\end{array}
\end{equation}
while on the other hand, we compute:
\begin{equation} \label{eq:condition5}
\begin{array}{rcll}
                       &   & \Phi_\mathcal R(g,x_{ij}) \star'\Phi_P(x_{j},h)) &\\
                       & = & (r_i(g)\vvvv^{-1}_{ij},x_{ij}) \star' (x_j, r_jh) & \hbox{by (\ref{eq:def_PhiX}-\ref{eq:def_PhiP}), i.e. def. of $\Phi_\mathcal R $ and  $\Phi_P$}\\
                       & = & (x_i, \rho(r_i(g)\vvvv^{-1}_{ij})\lambda'_{ij}r_j h) & \hbox{by (\ref{eq:recons}) in prop. \ref{prop:cocycles}}
\end{array}
\end{equation}
Equations (\ref{eq:condition4}) and (\ref{eq:condition4}),  together with $\rho(r_i(g)\vvvv^{-1}_{ij})\lambda'_{ij}r_j h= r_i \rho(g)\lambda_{ij}h$
(an immediate consequence of (\ref{eq:relate})), imply that:
$$ \Phi_P((g,x_{ij}) \star(h,x_{j}))= \Phi_\mathcal R(g,x_{ij}) \star'\Phi_P(h,x_{j}))$$
which completes the proof of (b). Condition (c) is a direct computation.

Last, we have to check that both constructions in item 1 and item 2 are inverse one to the other. It is easy to see that, applying the construction of item 2 and  then the construction of item 1 to a $\G \to \HH $-valued $1$-coboundary $(r_i,\vvvv_{ij}) $, one obtains $(r_i,\vvvv_{ij}) $.
Moreover,  two
  $(\Phi_\mathcal R,\Phi_P) $, $(\Phi_\mathcal R',\Phi_P') $
  isomorphisms of $\G \to \HH$-extensions  which correspond to the same coboundary $(r_i,\vvvv_{ij}) $ need to be equal. This follows from (\ref{eq:cobound2}), which clearly implies that $\Phi_{\mathcal R}=\Phi_\mathcal R'$, and from (\ref{eq:isomclasses}), which implies that $\Phi_P$ and $\Phi_{P'}$ coincide on every element in $P$ of the form $(x_{i},e) $, and are therefore equal since principal bundle morphisms that coincide on some global section coincide globally. This completes the proof.
\end{proof}

Let $ {\mathcal U}=(U_i)_{i \in I}$ be an open covering of a manifold $N$,
and $\G \to \HH$ be a crossed module of Lie groups.
It follows from proposition \ref{prop:coboundaries} that coboundaries define an equivalence relation on
the set of $\G \to \HH$-valued $1$-cocycles w.r.t. the open covering ${\mathcal U} $. The quotient set obtained by this equivalent relation is called \emph{$\G \to \HH$-valued $1$-cohomology w.r.t. the open covering ${\mathcal U}$} and is denote by $H^1_{\mathcal U}( \G \to \HH)$ .

The next corollary follows from propositions \ref{prop:cocycles} and
 \ref{prop:coboundaries}.

\begin{cor}\label{cor:1-1Cohomology-adapted}
Let $ {\mathcal U}=(U_i)_{i \in I}$ be an open covering of a manifold $N$,
and $\G \to \HH$ a crossed module of Lie groups. There is a one-to-one correspondence between the set $H^1_{\mathcal U}( \G \to \HH)$
and the set of all adapted Lie groupoid $\G\to \HH$-extensions of $\Cechs $ up to isomorphisms (of Lie groupoids $\G \to \HH $-extensions of $\Cechs$).
\end{cor}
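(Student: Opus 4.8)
The plan is to deduce this statement formally from the two preceding propositions, so that no new geometric input is required: the content lies entirely in reorganising the bijections already established. Write $\Theta$ for the map sending a non-Abelian $1$-cocycle $(\lambda,\gggg)$ (with respect to ${\mathcal U}$ and valued in $\G \to \HH$) to the adapted Lie groupoid $\G \to \HH$-extension $({\mathcal U},\bullet,\star)$ of $\Cechs$ constructed in item $2$ of Proposition \ref{prop:cocycles}. By item $3$ of that same proposition, $\Theta$ is a bijection from the set of non-Abelian $1$-cocycles onto the set of adapted Lie groupoid $\G \to \HH$-extensions of $\Cechs$, with inverse given by the construction of item $1$.

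The key step is then to observe that $\Theta$ intertwines the two equivalence relations at stake. On the source side, the coboundary relation has just been shown to be an equivalence relation, and its quotient is by definition $H^1_{\mathcal U}(\G \to \HH)$. On the target side, the relation of being isomorphic as adapted $\G \to \HH$-extensions of $\Cechs$ is manifestly reflexive, symmetric and transitive, since isomorphisms in the sense of Definition \ref{defi:isomppalbundles} compose and are invertible. Proposition \ref{prop:coboundaries} asserts precisely that, for two cocycles $(\lambda,\gggg)$ and $(\lambda',\gggg')$, there exists a $\G \to \HH$-valued $1$-coboundary relating them if and only if there exists an isomorphism between $\Theta(\lambda,\gggg)$ and $\Theta(\lambda',\gggg')$; in fact it furnishes an explicit bijection between the two sets of such objects, so in particular one set is nonempty exactly when the other is. Hence $(\lambda,\gggg)$ and $(\lambda',\gggg')$ are cohomologous if and only if $\Theta(\lambda,\gggg)$ and $\Theta(\lambda',\gggg')$ are isomorphic.

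It then remains to invoke the elementary fact that a bijection carrying one equivalence relation onto another descends to a bijection of the corresponding quotient sets. Applying this to $\Theta$ yields the desired one-to-one correspondence between $H^1_{\mathcal U}(\G \to \HH)$ and the set of adapted Lie groupoid $\G \to \HH$-extensions of $\Cechs$ taken up to isomorphism. I do not expect any serious obstacle here: the only points requiring care are the verification that both relations are genuine equivalence relations --- already granted for coboundaries in the paragraph preceding the statement, and immediate for isomorphisms --- and the routine check that the ``if and only if'' of Proposition \ref{prop:coboundaries} is exactly the compatibility needed to pass to quotients. Since all the substantive algebraic work was already carried out in Propositions \ref{prop:cocycles} and \ref{prop:coboundaries}, this corollary is purely formal.
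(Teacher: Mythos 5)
Your proposal is correct and matches the paper's intent exactly: the paper derives this corollary directly from Propositions \ref{prop:cocycles} and \ref{prop:coboundaries}, merely stating that it "follows from" them, and your argument spells out precisely the formal bookkeeping (the bijection intertwining the two equivalence relations and descending to quotients) that the paper leaves implicit.
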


The notion of adapted extension may appear to be somewhat arbitrary. We wish to convince the reader that it is not,
 by showing the next proposition.

\begin{prop}\label{prop:everyextensionisadapted}
Let $ {\mathcal U}=(U_i)_{i \in I}$ be an open covering of a manifold $N$ such that $U_{ij}$
is a contractible open set for all $i,j \in I$,
and let $\G \to \HH$ be a crossed module of Lie groups.
Then every Lie groupoid $\G \to \HH$-extension of the \v{C}ech groupoid $\Cechs $ is isomorphic (as a Lie groupoid $\G \to \HH$-extension of $\Cechs$) to an adapted Lie groupoid $\G \to \HH$-extension $\Cechs$.
\end{prop}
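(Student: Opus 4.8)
The plan is to transport the entire structure of a given $\G \to \HH$-extension onto the standard models $\mathcal R = \G \times \coprod_{i,j} U_{ij}$ and $P = \coprod_i U_i \times \HH$ by means of carefully chosen sections, and then to check that the four constraints of Definition \ref{definition of Adapted extension} hold automatically. The one idea that makes everything fit together is that the trivialization of the kernel must \emph{not} be chosen freely, but must be read off from $P$ through $\chi$. So let $({\mathcal R} \stackrel{\varphi}{\to} \Cechs, P \stackrel{\pi}{\to} \coprod_i U_i, \chi)$ be an arbitrary $\G \to \HH$-extension of the \v{C}ech groupoid, with kernel $K$.

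First I would choose, for each $i \in I$, a smooth section $s_i : U_i \to P$ of $\pi$; these exist because $U_i = U_{ii}$ is contractible, so the principal $\HH$-bundle $P|_{U_i}$ is trivial. Setting $\tau_i := \chi \circ s_i$, the fact that $\chi$ is a morphism of principal bundles over the identity of $\mathcal R$ with values in the band means that each $\tau_i(x)$ is a Lie group isomorphism $\G \to K_{x_i}$ depending smoothly on $x \in U_i$; equivalently, $\tau_i$ is a section over $U_i$ of the principal $\Aut(\G)$-bundle $Band(\mathcal R \to \Cechs)$, and so trivializes $K$ over $U_i$. Next I would choose sections $\xi_{ij} : U_{ij} \to \mathcal R$ of $\varphi$ over the $(i,j)$-component, taking $\xi_{ii} := \epsilon$. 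After trivializing $K$ by $\tau_i$, the restriction $\varphi^{-1}(U_{ij}) \to U_{ij}$ is a principal $\G$-bundle (its fibres being torsors under the kernel), hence trivial over the contractible set $U_{ij}$, which yields $\xi_{ij}$.

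With these choices I would set
\[ \Phi_{\mathcal R} : \G \times \coprod_{i,j} U_{ij} \to \mathcal R, \quad (g, x_{ij}) \mapsto \tau_i(x)(g) \gpdproductR \xi_{ij}(x), \qquad \Phi_P : \coprod_i U_i \times \HH \to P, \quad (x_i, h) \mapsto s_i(x) \cdot h. \]
Both are diffeomorphisms (the inverse of $\Phi_{\mathcal R}$ sends $r \in \varphi^{-1}(U_{ij})$ to $(\tau_i(x)^{-1}(r \gpdproductR \xi_{ij}(x)^{-1}), x_{ij})$), and $\varphi \circ \Phi_{\mathcal R}$ is the projection onto $\coprod_{i,j} U_{ij}$, so $\Phi_{\mathcal R}$ is over the identity of $\Cechs$. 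I would then transport the groupoid product, the $\HH$-action and the map $\chi$ onto the models along $\Phi_{\mathcal R}$ and $\Phi_P$; by construction $(\Phi_{\mathcal R}, \Phi_P, \id_\HH)$ becomes an isomorphism of $\G \to \HH$-extensions onto the resulting model, so it only remains to verify that this model is \emph{adapted}.

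Items 1 and 2 hold by the shape of the models together with $\Phi_P((x_i,h)\cdot h') = s_i(x)\cdot hh' = \Phi_P(x_i, hh')$, showing the transported $\HH$-action is trivial. Item 4 uses $\xi_{ii} = \epsilon$: then $\Phi_{\mathcal R}(g, x_{ii}) = \tau_i(x)(g) \in K_{x_i}$, and since $\tau_i(x)$ is a \emph{group} homomorphism, $\tau_i(x)(g)\gpdproductR\tau_i(x)(g') = \tau_i(x)(gg')$ translates into $(g,x_{ii})\bullet(g',x_{ij}) = (gg', x_{ij})$. Item 3 is where the choice $\tau_i = \chi\circ s_i$ is indispensable: using that $\chi$ intertwines the $\HH$- and $\Aut(\G)$-actions via $\jmath$ and that the $\Aut(\G)$-action on the band is $b_m \cdot \rho = b_m \circ \rho$ (item \ref{actAutonBand}), one gets $\chi(\Phi_P(x_i,h)) = \tau_i(x)\circ\jmath(h)$, whose value at $g$ is $\tau_i(x)(h(g)) = \Phi_{\mathcal R}(h(g), x_{ii})$; transported, this reads $\chi(x_i,h)(g) = (h(g), x_{ii})$, which is item 3. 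I expect the main obstacle to be not any single computation but the coordination of the three families of sections so that all four constraints hold simultaneously; concretely, the realization that the kernel trivialization is forced to be $\tau_i = \chi\circ s_i$ is exactly what reconciles item 3 with items 1, 2 and 4, while the only geometric input, used twice, is the triviality of principal bundles over the contractible sets $U_{ij}$.
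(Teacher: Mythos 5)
Your proposal is correct and follows essentially the same route as the paper's proof: a section $\sigma$ of $P$ (your $s_i$) whose image under $\chi$ trivializes the kernel, a section $\sigma_1$ of $\varphi$ over the off-diagonal components (your $\xi_{ij}$, with the unit section on the diagonal playing the role the paper handles via $\tau_K$), the resulting maps $\tau$ and $\Psi_P$ (your $\Phi_{\mathcal R}$ and $\Phi_P$), and transport of structure followed by verification of the four conditions of Definition \ref{definition of Adapted extension}. The one point you rightly single out --- that the kernel trivialization must be $\chi\circ\sigma$ rather than arbitrary, so that item 3 holds --- is exactly the mechanism of the paper's argument.
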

\begin{proof}
Let $( \mathcal R \stackrel{\phi}{\to} \Cechs, P\to \coprod_{i \in I}U_i, \chi) $ be a Lie groupoid $\G \to \HH$-extension of $\Cechs$.
Since $\coprod_{i \in I}U_i $ is a disjoint union of contractible sets (since $U_{ij}$ is by assumption contractible for all $i,j \in I$, so is $U_i = U_{ii}$), there exists a global section $\sigma $ of the principal $\HH$-bundle $ P\to \coprod_{i \in I}U_i$.

Since $\chi: P \to Band (\mathcal R \stackrel{\phi}{\to} \Cechs)$ is by assumption a morphism of principal bundles over the identity of $ \coprod_{i \in I}U_i $, the map $\hat{\sigma} :=\chi \circ \sigma  $ is a global section of the principal $Aut(\G)$-bundle $Band( \mathcal R \stackrel{\phi}{\to} \Cechs )$.
In turn, a global section of the band amounts to a global trivialization of the kernel $K \to  \coprod_{i \in I}U_i$, by considering the group bundle isomorphism $ \tau_{K} :\G \times \coprod_{i \in I} U_{i}  \simeq K $ given by
 $ (g,x_i) \mapsto  \hat{\sigma}(x_i) (g) $.
 Since, by construction, $\hat{\sigma}(x_i) $ belongs to $Band_{x_i}= Aut(\G,K_{x_i}) $, it is clear that
 $\tau_K $ is, as expected, a group bundle isomorphism over the identity of $\coprod_{i \in I} U_{ii} $.

 Now, the surjective submersion $\phi: \mathcal R \to  \coprod_{i,j \in I} U_{ij} $ restricts to a surjective submersion
 from $\mathcal R  \backslash K $ to $\coprod_{i \neq j} U_{ij} $, and the fibers of this submersion are acted upon transitively and freely by $K $. Using $\tau_K $, we endow
 $$\mathcal R  \backslash K  \to \coprod_{i,j \in I \hbox{ s.t.} i \neq j} U_{ij}$$
with a structure of principal $\G$-bundle as follows: the outcome of the action of  $g \in \G $ on $r \in  \mathcal R  \backslash K $
is defined to be $ \tau_K (g, {s(r)} ) \bullet_{\mathcal R} r   $.
Every principal bundle over a disjoint union of contractible open sets is trivial, which means, in this case, that there is a global section
$\sigma_{1}:\coprod_{i \neq j} U_{ij} \to \mathcal R\backslash K $.
Then we define $\tau_{\mathcal R \backslash K } : \G \times \coprod_{i \neq j} U_{ij} \to \mathcal R \backslash K $  by
$$ (g,x_{ij}) \mapsto \tau_K(g,x_i) \bullet_{\mathcal R} \sigma_1(x_{ij}),$$
 for all $i,j \in I, i \neq j$.
By construction, $\tau_{\mathcal R \backslash K } $ is a group bundle morphism over the identity of $\coprod_{i \neq j }U_{ij} $.
Gluing $\tau_{ K}  $ and $ \tau_{\mathcal R \backslash K}$, we get a map (over the identity of $\coprod_{i,j \in I}U_{ij} $)
that we denote by $\tau : \G \times \coprod_{i,j \in I} U_{ij} \to \mathcal R $, namely:
 $$ \tau(g,x_{ii}):= \tau_K(g,x_i),\quad \forall i\in I $$ and $$ \tau(g,x_{ij}):= \tau_{\mathcal R  \backslash K }(  g,x_{ij} ),\quad \forall i,j \in I  \hbox{with} i \neq j.$$
 The section $\sigma$ of $ P \to  \coprod_{i \in I} U_i$ also, induces a map
   $\Psi_P :   \coprod_{i \in I} U_{i} \times \HH \simeq  P $ given by:
   \begin{equation} \label{eq:psiP}
    (x_{i}, h)  \mapsto \sigma (x_i) \cdot  h.
    \end{equation}
With the help of this pair of maps $\Psi_P $ and $\tau$, the structure of $\G \to \HH$-extensions on $ (\mathcal R \stackrel{\phi}{\to} \Cechs, P\to \coprod_{i \in I}U_i, \chi)$ is transported and induces a structure of $\G \to \HH$-extension on $( \G \times \coprod_{i,j \in I} U_{ij} \stackrel{\phi}{\to} \Cechs,  \coprod_{i} U_{i} \times \HH \to \coprod_{i \in I}U_i, \chi' )  $. Explicitly the induced Lie groupoid structure on $\G\times \coprod_{i,j\in I}U_{ij} \toto \coprod_{i\in I}U_{i}$ is given by :
$$(g,x_{ij})\bullet(g',x_{jk}):=\tau^{-1}(\tau(g,x_{ij})\bullet_{\mathcal R} \tau(g',x_{jk})),$$
for all $g,g'\in\G, i,j,k\in I, x\in U_{ijk}$, the induced action of Lie groupoid $G\times \coprod_{i,j\in I}U_{ij} \toto \coprod_{i\in I}U_{i}$ on $\coprod_{i\in I}U_{i}\times \HH$ is given by:
$$(g,x_{ij})\star(x_j,h):=\Psi_P^{-1}(\tau(g,x_{ij}) \bullet_{\mathcal R,P}\Psi_P(x_j,h)),$$
for all $g\in\G,h\in\HH i,j\in I, x\in U_{ij}$ and the induced principal bundle structure on $\coprod_{i\in I}U_{i}\times \HH \to \coprod_{i}U_{i}$ over the Lie groupoid $\G\times \coprod_{i,j\in I}U_{ij} \toto \coprod_{i,j\in I}U_{ij}$ is given by:
$$(x_i,h)h'=(x_i,hh'),$$
for all $h,h'\in\HH i\in I, x\in U_{i}$.
Last we define $ \chi':\coprod_{i\in I}U_{i} \times \HH \to Band(\G\times \coprod_{i,j\in I}U_{ij} \toto \coprod_{i,j\in I}U_{ij})$ by
$$ (x_i,h)\mapsto (x_i,j(h)),$$
for all $h\in\HH i\in I, x\in U_{i}$.
We claim that:
\begin{enumerate}
\item the extension $Ext_2:=(G\times \coprod_{i,j \in I}U_{ij} \to \coprod_{i,j \in I}U_{ij}, \coprod_{i\in I}U_{i} \times \HH \to \coprod_{i \in I}U_{i},\chi' )$ is a Lie groupoid $\G \to \HH$-extension.
\item the Lie groupoid $\G \to \HH$-extension $Ext_2$ is isomorphic to the  Lie groupoid $\G \to \HH$-extension $Ext_1:=(\mathcal R \to \coprod_{i,j \in I}U_{ij}, P \to \coprod_{i \in I}U_{i}, \chi)$.
\item the Lie groupoid $\G \to \HH$-extension $Ext_2$ is an adapted Lie groupoid $\G \to \HH$-extension.
\end{enumerate}
These claims complete the proof of the proposition.
For the proof of claim 1), it is enough to check that $(x_i,h)\cdot \rho(g)=\chi'(x_i,h)(g)\star(x_i,h)$ for all $x\in N,i\in I,h\in \HH, g\in \G,$ which goes as follows:
\begin{equation}
\begin{array}{rcll}
                        & &\chi'(x_i,h)(g)\star (x_i,h) & \\
                        &=& (h(g),x_{ii}) \star (x_i,h)& \hbox{by def of $\chi'$}\\
                        &=& \Psi_P^{-1}(\tau (h(g),x_{ii})\bullet_{\mathcal R,P}\Psi_P(x_i,h)) & \hbox{by def of $\star$}\\
                        &=& \Psi_P^{-1}(\chi \circ \sigma (x_i)(h(g)) \bullet_{\mathcal R,P}\sigma(x_i)\cdot h) &\hbox{by def of $\tau$ and def of $\Psi_P$}\\
                        &=& \Psi_P^{-1}( \chi(\sigma(x_i)\cdot h)(g)\bullet_{\mathcal R,P}\sigma(x_i)\cdot h)& \hbox{$\chi$ is morphism of ppal bundles}\\
                        &=& \Psi_P^{-1}(\sigma(x_i)\cdot h \cdot \rho(g))& \hbox{since $Ext_1$ is a $\G \to \HH$-extension}\\
                        &=& (x_i,h\cdot\rho(g))& \hbox{by def of $\Psi_P^{-1}$}\\
                        &=& (x_i,h)\cdot\rho(g).&

\end{array}
\end{equation}
For the proof of claim 2), since  $\Psi_P$ is a morphisms of principal bundles and $\tau$ is a morphism of Lie groupoids, it is enough to prove that the following diagram commutes: \label{diag*}
$$\xymatrix{\coprod_{i\in I}U_i \times\HH \ar[d]^{\chi'} \ar[rrr]^{\Psi_P}&& & P \ar[d]\\
             Band(\G\times\coprod_{i,j \in I}U_{ij} \ar[r]& \coprod_{i,j \in I}U_{ij})\ar[r]^{\bar{\tau}} &Band(\mathcal R \ar[r] &\coprod_{i,j\in I}U_{ij} ) }$$
In turn, the commutativity of this diagram is proved by the following computations:
\begin{equation*}
\begin{array}{rcll}
                     & & (\bar{\tau} \circ \chi' (x_i,h))(g) & \\
                     &=& \tau(\chi'(x_i,h)(g)) & \hbox{by def of $\bar{\tau}$}\\
                     &=& \tau(h(g),x_{ii}) & \hbox{by def of $\chi'$}\\
                     &=& (\chi\circ \sigma (x_i))(h(g)) & \hbox{by def of $\tau$}\\
                     &=& \chi \circ \sigma(x_i) \circ j(h)(g) &\hbox{ }\\
                     &=& \chi(\sigma(x_i)\cdot h)(g)& \hbox{since $\chi$ is a morphism of ppal bundles}\\
                     &=& \chi \circ \Psi_P(x_i, h)(g)& \hbox{by def of $\Psi_P $},

\end{array}
\end{equation*}
for all $i\in I , x_i \in U_i, h \in \HH, g\in \G$. \\
 Last we prove the claim 3). For this, it is enough to check that the axiom 4 in definition \ref{definition of Adapted extension} holds, while the other axioms in definition \ref{definition of Adapted extension} hold by construction. We show that
 \begin{equation}\label{eq:adap}
 (g,x_{ii})\bullet (g',x_{ij})=(gg',x_{ij}),
 \end{equation}
  for all $x \in N, i,j \in I , g,g' \in \G$. This goes as follows:
\begin{equation*}
\begin{array}{rcll}
                     & & \hbox{LHS of (\ref{eq:adap})}& \\
                     &=& \tau^{-1}(\tau(g,x_{ii})\bullet_{\mathcal R}\tau(g',x_{ij}))&\hbox{by def of $\bullet$}\\
                     &=& \tau^{-1}(\chi \circ \sigma(x_i)(g)\bullet_{\mathcal R}\chi \circ\sigma(x_i)(g')\bullet_{\mathcal R}\sigma_1(x_{ij}))&\hbox{by def of $\tau$}\\
                     &=& \tau^{-1}(\chi \circ\sigma(x_i)(gg')\bullet_{\mathcal R} \sigma_1(x_{ij}))&\hbox{$\chi \circ\sigma(x_i)$ is a morphism of groups}\\
                     &=& \tau^{-1}(\tau(gg',x_{ij}))&\hbox{by def of $\tau$ }\\
                     &=& \hbox{RHS of (\ref{eq:adap})}.&
\end{array}
\end{equation*}

\end{proof}

We can now state the conclusion of this section, which follows immediately from proposition \ref{prop:everyextensionisadapted} and corollary \ref{cor:1-1Cohomology-adapted}.

\begin{them}
Let $ {\mathcal U}=(U_i)_{i \in I}$ be an open covering of a manifold $N$ such that $U_{ij}$
is a contractible open set for all $i,j \in I$ and $\G \to \HH$ be a crossed module of Lie groups.
Then, there is a one-to-one correspondence between
\begin{enumerate}
\item[(i)] the set $H^1_{\mathcal U}( \G \to \HH)$,
\item[(ii)] adapted Lie groupoid $\G \to \HH$-extensions of the \v{C}ech groupoid $\Cechs $, up to isomorphisms of Lie groupoid $\G \to \HH$-extensions of $\Cechs $,
\item[(iii)] Lie groupoid $\G \to \HH$-extensions of $\Cechs $ up to isomorphisms of Lie groupoid $\G \to \HH$-extensions of $\Cechs $.
\end{enumerate}
\end{them}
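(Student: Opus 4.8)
The plan is to realize the three-way correspondence as the composite of two bijections: the correspondence between (i) and (ii) is already furnished by Corollary \ref{cor:1-1Cohomology-adapted}, while the correspondence between (ii) and (iii) reduces formally to Proposition \ref{prop:everyextensionisadapted}. Since Corollary \ref{cor:1-1Cohomology-adapted} is stated for an arbitrary open covering, it applies verbatim under the present contractibility hypothesis on the $U_{ij}$, so the only genuine work lies in relating (ii) and (iii).

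To pass from (ii) to (iii), I would exploit the fact that in both sets the equivalence relation is literally the same, namely isomorphism of Lie groupoid $\G \to \HH$-extensions of $\Cechs$, and that an adapted extension is by definition a particular $\G \to \HH$-extension of $\Cechs$ carrying no separate notion of isomorphism of its own. Hence the inclusion of adapted extensions into all $\G \to \HH$-extensions of $\Cechs$ descends to a map $\iota$ on isomorphism classes, sending the class of an adapted extension to its class as a general extension. I would then verify that $\iota$ is a bijection. Well-definedness and injectivity are immediate: two adapted extensions have the same image in (iii) precisely when they are isomorphic as $\G \to \HH$-extensions of $\Cechs$, which is exactly the relation defining the classes in (ii). Surjectivity is the one point where the hypothesis on the $U_{ij}$ is used: given any $\G \to \HH$-extension $E$ of $\Cechs$, Proposition \ref{prop:everyextensionisadapted} produces an adapted extension isomorphic to $E$, whose class in (ii) maps under $\iota$ to the class of $E$ in (iii).

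Composing the bijection of Corollary \ref{cor:1-1Cohomology-adapted} with the bijection $\iota$ then yields the desired correspondence among (i), (ii) and (iii), completing the proof.

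As for the main obstacle: essentially all the substance has already been absorbed into the two results invoked, so the remaining argument is genuinely formal. If one step deserves attention it is the surjectivity of $\iota$, which is the only place that invokes contractibility of the $U_{ij}$ and which rests entirely on Proposition \ref{prop:everyextensionisadapted}, whose proof is the real work (the construction of the global sections of $P$ and of $\mathcal{R} \backslash K$ trivializing the extension). The well-definedness and injectivity of $\iota$ are, by contrast, purely set-theoretic bookkeeping about equivalence classes under a single ambient notion of isomorphism.
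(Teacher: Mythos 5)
Your proposal is correct and follows essentially the same route as the paper: the (i)--(ii) correspondence is delegated to Corollary \ref{cor:1-1Cohomology-adapted} and the (ii)--(iii) correspondence to Proposition \ref{prop:everyextensionisadapted}, the only cosmetic difference being that you describe the bijection via the inclusion of adapted extensions (trivially well-defined, surjective by the proposition), whereas the paper describes the inverse assignment from (iii) to (ii) and checks its well-definedness by noting that two adapted extensions isomorphic to a common extension are isomorphic to each other.
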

\begin{proof}
The equivalence between (i) and (ii) was already stated in corollary \ref{cor:1-1Cohomology-adapted}.
The equivalence between (iii) and (ii) comes from proposition \ref{prop:everyextensionisadapted} which states that every Lie groupoid $\G \to \HH$-extension
 of $\Cechs $ is isomorphic to an adapted one, of course, a given extension can be isomorphic (as Lie groupoid $\G \to \HH$-extensions of $\Cechs $)
 to two different adapted $\G \to \HH$-extensions, but both adapted extensions are then isomorphic (as Lie groupoid $\G \to \HH$-extensions of $\Cechs $), so that the assignment from (iii) to (ii) is well-defined. This concludes the proof of the theorem.
\end{proof}

\section{Morita equivalence of Lie groupoid $\G \to \HH$-extensions, and $\G \to \HH$-gerbes on groupoids.}\label{sec:MoritaExt}

Let $\G \to \HH$ be a crossed module. We intend in this section to define, purely in terms of Lie groupoids, the notion of $\G \to \HH $-gerbes over a given Lie groupoid $\BBB \toto \BBB_{0}$, having in mind the case where $\BBB \toto \BBB_{0}$ is the trivial Lie groupoid $N \toto N$ associated to a manifold~$N$.
In view of the preceding section, it is reasonable to consider all the $\G \to \HH$-extensions of all the possible pull-back of $\BBB \toto \BBB_{0}$ with respect to surjective submersions. For instance, when $\BBB \toto \BBB_{0} $ is of the form $N \toto N$, with $N$ a manifold, this includes all the $\G \to \HH$-extensions of the \v{C}ech groupoids associated to an arbitrary open cover of $N$ (because the \v{C}ech groupoid $N[{\mathcal U}]  \toto \coprod_{i \in I}U_i$ is the pull-back groupoid of $N \toto N $ with respect to the natural inclusion maps $\imath: \coprod_{i \in I}U_i \to N $).
  But of course, we shall later have to take a quotient of that class, which is way too large. We do it by identifying
two $\G \to \HH$-extensions which are Morita equivalent in some sense described below.

Following several comments from the referee, we would like to say a few words about the link between the present work and an article by Ginot and Sti\'enon \cite{GinotStienon}.
As mentioned in Remark \ref{rem:linkGinotStienon}, there is a clear relation between $ \G \to \HH$-extensions and $ \G \to \HH$-bundles in their sense, and we have no doubt that we could have completed the purpose of this section by using their language. There were two reasons not to do so. First, we wanted not to use Lie $2$-groupoids, a self-imposed limitation that can criticized, of course, but we feel that Lie $2$-groupoids would be a too demanding notion for some mathematicians willing to study non-Abelian gerbes.  Second, one of our concern was to address the problem of knowing when two extensions should be identified. We do not want to identify, in the case of the crossed-module $ 1  \to \HH $, case for which extensions are simply $\HH$-principal bundles (see example \ref{ex:1toHH}), a principal bundle and its pull-back through a diffeomorphism of the base. To overcome this difficulty, we introduced the notion of
$ \G \to \HH$-extensions over a given Lie groupoid that shall appear below together with the appropriate Morita equivalences.
Of course, we have no doubt that the point of view of  \cite{GinotStienon} could also be adapted in order to make these identifications precise, but this does not appear to us to be that trivial. However, we acknowledge that the point of view of $ \G \to \HH$-principal bundles is also an efficient tool, and, in a subsequent work, we hope to make the relation more precise.

\subsection{Definition of Morita equivalence of $ \G \to \HH$-extensions and $\G \to \HH $-gerbes}

 Let us first define what the pull-back of a $ \G \to \HH$-extension is.

 Given a Lie groupoid extension $\mathcal{R} \stackrel{\phi}{\to} \GGG \toto M$ and a surjective submersion $p:M' \to M$, the functor of definition \ref{def:pback} applied to $\mathcal{R} \stackrel{\phi}{\to} \GGG $ yields a Lie groupoid extension
  $$\mathcal{R}[p] \stackrel{\phi[p]}{\longrightarrow} \GGG[p].$$
  It is routine to check that   $\mathcal{R}[p] \stackrel{\phi[p]}{\longrightarrow} \GGG[p] \toto M'$ is again a Lie groupoid extension.
    This construction still goes through under the weaker assumption that $p$ is a generalized surjective submersion for the Lie groupoid $\GGG\toto M $. Notice that $p$ is a generalized surjective submersion for the Lie groupoid $\GGG \toto M$
    if and only if it is a generalized surjective submersion for the Lie groupoid $\mathcal{R} \toto M$, so that we could say
    that this construction still goes through under the weaker assumption that $p$ be a generalized surjective submersion for the Lie groupoid ${\mathcal R}\toto M $.
     For all such maps $p:M'\to M$, we call the Lie groupoid extension $\mathcal{R}[p] \stackrel{\phi[p]}{\longrightarrow} \GGG[p] \toto M'$ the \emph{pull-back of the Lie groupoid extension $\mathcal{R} \stackrel{\phi}{\to} \GGG \toto M$ with respect to $p$}.

Having defined the pull-back of Lie groupoid extensions, we wish to define the pull-back of Lie groupoids $\G \to\HH $-extensions.
   This shall require to go through some technical considerations about the pull-back of the kernel and pull-back of the band of a Lie groupoid $\G$-extension.

   There is a clear notion of pull-back for both group bundles (resp. principal bundles): to say it in one word, given a group bundle (resp. principal bundle) $P \stackrel{\pi}{\to} M $, and a smooth map $p: M' \to M $, then the fibered product $P \times_{\pi,M,p} M' $ endows a natural structure of group bundle (resp. principal bundle).  To a Lie groupoid extension, we have associated in section \ref{subsec:defofextensions} a bundle of group, called the kernel, also starting with a $\G$-extension, we have constructed an $\Aut(\G) $-principal bundle, called the band. The next proposition claims that these two constructions behave well with respect to pull-back.

  \begin{prop}\label{prop:pullbackExtensions} Let $M,M'$ be smooth manifolds, $p : M' \to M $ be a surjective submersion map
  and $\mathcal{R}\stackrel{\phi}{\to} \GGG \toto M$ be a Lie groupoid extension.
   Then:\begin{enumerate}
   \item there is a canonical isomorphism between the kernel of the Lie grioupoid extension $\mathcal{R}[p] \stackrel{\phi[p]}{\longrightarrow} \GGG[p] \toto N$ and the pull-back of the kernel $K$ of the Lie groupoid extension $\mathcal{R}\stackrel{\phi}{\to} \GGG \toto M$ by the surjective submersion map $p$,
   \item if the Lie groupoid extension $\mathcal{R}\stackrel{\phi}{\to} \GGG \toto M$ is a Lie groupoid $\G$-extension, for some Lie group $\G$, then the pull-back
   $\mathcal{R}[p] \stackrel{\phi[p]}{\longrightarrow} \GGG[p] \toto N$ is also a Lie groupoid $\G$-extension,
   \item in the case of a $\G$-extension, there is a canonical isomorphism between the band of $\mathcal{R}[p] \stackrel{\phi[p]}{\lra} \GGG[p] \toto M'$ and the pull-back of the band of $\mathcal{R}\stackrel{\phi}{\to} \GGG \toto M$ by the surjective submersion$p$.
   \end{enumerate}
    The same holds true when the map $p$ is a generalized surjective submersion.
  \end{prop}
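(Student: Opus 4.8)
The plan is to make each of the three constructions explicit from the pull-back formula of Definition \ref{def:pback} and then to verify that the evident identifications are the asserted canonical isomorphisms. Throughout I write an element of $\mathcal{R}[p]$ as a triple $(x,r,y)\in M'\times_{p,M,s}\mathcal{R}\times_{t,M,p}M'$, with source $x$, target $y$, product $(x,r,y)\bullet(y,r',z)=(x,r\bullet r',z)$ and inverse $(x,r,y)^{-1}=(y,r^{-1},x)$; the map $\phi[p]$ sends $(x,r,y)$ to $(x,\phi(r),y)$.

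For item 1, I would compute the kernel of $\mathcal{R}[p]\stackrel{\phi[p]}{\to}\GGG[p]$ directly: a triple $(x,r,y)$ lies in it exactly when $(x,\phi(r),y)$ is a unit of $\GGG[p]$, which forces $x=y$ and $\phi(r)=\epsilon(p(x))$, i.e. $r\in K_{p(x)}$. Thus the kernel is $\{(x,r,x):x\in M',\,r\in K_{p(x)}\}$, and the smooth bijection $(x,r,x)\mapsto(r,x)$ identifies it with the pull-back group bundle $K\times_{\pi,M,p}M'$. From the product formula $(x,r,x)\bullet(x,r',x)=(x,r\bullet r',x)$ this map is fibrewise a group morphism, so it is the desired group bundle isomorphism; call it $\theta$, with fibrewise components $\theta_x\colon (K[p])_x\stackrel{\sim}{\to}K_{p(x)}$.

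Item 2 then follows formally, since local triviality is a condition on the base and the pull-back of a bundle of groups with typical fibre $\G$ has the same typical fibre: given $x'\in M'$, take a neighborhood $U$ of $p(x')$ trivializing $K$ and pull the trivialization back along $p$ to $p^{-1}(U)$. For item 3 I would use $\theta$ to transport the band. By definition the fibre of $Band(\mathcal{R}[p]\to\GGG[p])$ over $x$ is $\mathrm{isom}(\G,(K[p])_x)$, so post-composition with $\theta_x$ gives a bijection $b\mapsto(\theta_x\circ b,x)$ onto $Band(\mathcal{R}\to\GGG)\times_{\pi,M,p}M'$; it is smooth by the smooth structure recalled in item \ref{actAutonBand}, and it is $\Aut(\G)$-equivariant because $\theta_x\circ(b\circ\rho)=(\theta_x\circ b)\circ\rho$, hence an isomorphism of principal $\Aut(\G)$-bundles over $M'$.

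It remains, still within item 3, to check that this band isomorphism intertwines the two groupoid actions, and this is the step that requires the most care. For $(x,r,y)\in\mathcal{R}[p]$ and $b_y\in\mathrm{isom}(\G,(K[p])_y)$, writing $b_y(g)=(y,\theta_y(b_y(g)),y)$ and using the product and inverse in $\mathcal{R}[p]$ I would compute $(x,r,y)\bullet b_y(g)\bullet(x,r,y)^{-1}=(x,\,r\,\theta_y(b_y(g))\,r^{-1},\,x)$, so that, by \eqref{def:def of gpdactband}, $\theta_x\circ\big((x,r,y)\gpdactband b_y\big)=r\gpdactband(\theta_y\circ b_y)$, which is precisely the action of $r$ on the pull-back band. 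This shows that $\theta$ induces an isomorphism of principal bundles over the groupoid $\mathcal{R}[p]\toto M'$. Finally, the generalized surjective submersion case needs no new idea: by Lemma \ref{lem: generalized surjective submersion} the pull-back $\mathcal{R}[p]$ is still a Lie groupoid, the set-theoretic identifications of items 1--3 are unchanged, and smoothness is again supplied by item \ref{actAutonBand}. The only genuine obstacle is bookkeeping --- keeping the three-component notation consistent through the conjugation computation of item 3 --- since all the conceptual content is concentrated in the single fibrewise identification $\theta_x\colon(K[p])_x\cong K_{p(x)}$.
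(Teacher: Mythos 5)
Your proof is correct and follows essentially the same route as the paper: compute the kernel of $\mathcal{R}[p]\to\GGG[p]$ explicitly as $\{(x,r,x): r\in K_{p(x)}\}$, identify it with $K\times_M M'$, deduce local triviality, and transport the band through this identification. You are in fact somewhat more careful than the paper, which does not spell out the $\Aut(\G)$-equivariance or the compatibility of the band identification with the $\mathcal{R}[p]$-action via conjugation; those verifications are welcome but do not change the argument.
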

  \begin{proof}
  The kernel of the Lie groupoid extension $\mathcal{R}[p] \stackrel{\phi[p]}{\lra} \GGG[p] \toto M'$, denoted by $K[p]$, is, as a set, equal to $\{ (n,k,n) | n\in M' , k\in K_n \} $, where the kernel of the Lie groupoid extension $\mathcal{R} \stackrel{\phi}{\to} \GGG \toto M$ is denoted by $K$. As a bundle of group, $K[p]$ can be identified, therefore, with $M'\times_M K$. This proves the first item.

  In particular, the fiber $K[p]_{n}$ of the kernel $K[p]$ over a given point $n\in M'$ is isomorphic to
   $K_{p(n)} $, more generally, if $K$ is locally trivial with typical fiber $\G$, so is its pull-back $K[p]$. This  means precisely that the pull-back of a Lie groupoid $\G$-extension is again a Lie groupoid $\G$-extension.
   This proves the second item.

     The identification between $K' $ the pull back of the kernel of the Lie groupoid extension $\mathcal{R}\stackrel{\phi}{\to} \GGG \toto M$ and the kernel $K[p]$ of the Lie groupoid extension $\mathcal{R}[p] \stackrel{\phi[p]}{\to} \GGG[p] \toto M'$ induces an identification between
     the set of all Lie group automorphisms from $\G$ to $K_m'$ and $Band_{p(m)}(\mathcal{R} \stackrel{\phi}{\to} \GGG) $ for all $ m \in M'$. All together, these identifications yield an identification $Band(\mathcal{R}[p] \stackrel{\phi[p]}{\to} \GGG[p])$ and  $M'\times_{M} Band(\mathcal{R} \stackrel{\phi}{\to} \GGG)$. This proves the last item.
   \end{proof}

   We are now able to define clearly the notion of pull-back of a $\G \to \HH$-extension $( \mathcal{R} \to \GGG, P \to M, \chi ) $.
Let $p: M' \to M $ be a (maybe generalized) surjective submersion. According to the second item in proposition \ref{prop:pullbackExtensions}, the pull-back extension $ \mathcal{R}[p] \stackrel{\phi[p]}{\to} \GGG[p] $ is again a $\G$-extension. Moreover, $p^* P = P \times_M M' \to M' $ is an principal $\HH$-bundle over $M'$, which is acted upon by  $\mathcal{R}[p] \toto M' $ as follows:
    $$ (n,r,n') \bullet (x,n') = (r \bullet x , n) ,$$
    for all $n,n' \in M', x\in P, r \in R $ subject to the constraints $p(n)=s(r),t(r)=p(n')=p(x) $.
   The map $\chi[p] : P \times_M M'  \to Band(  \mathcal{R} \to \GGG ) \times_M M' $ defined by $ (p,n) \to (\chi(p), n) $, composed with the canonical isomorphism between $Band(  \mathcal{R} \to \GGG ) \times_M M' $ and
    $ Band(  \mathcal{R}[p]  \stackrel{\phi[p]}{\to}  \GGG[p] ) $ of item 3 in proposition \ref{prop:pullbackExtensions}, satisfies all the requirements needed to guarantee that   $( \mathcal{R}[p] \stackrel{\phi[p]}{\to} \GGG[p], p^*P \to M', \chi[p] ) $ is a $\G \to \HH$-extension.

    \begin{defi} Let $( \mathcal{R}  \stackrel{\phi}{\to} \GGG, P \to M, \chi ) $ be a Lie groupoid $\G \to \HH$-extension.
Let $p: M' \to M $ be a (generalized) surjective submersion. We call the Lie groupoid $\G \to \HH$-extension defined in the lines above the \emph{pull-back of the Lie groupoid $\G \to \HH$-extension $(\mathcal{R} \stackrel{\phi}\to \GGG, P \to M, \chi)$ with respect to $p$} and we denote it by
$  ( \mathcal{R}[p] \stackrel{\phi[p]}\to \GGG[p], P[p] \to M[p], \chi[p] ) $.
    \end{defi}

Indeed, we need a notion which is slightly more subtle. Recall that our purpose is to define gerbes as being the quotient of a sub-class of all $\G \to \HH$-extensions by some relation. We can now be more precise, and define, given a Lie groupoid $\BBB \toto \BBB_0 $,
a \emph{$\G \to \HH$-extension over $\BBB \toto \BBB_0$} to be a quadruple
$(q, \mathcal{R}\stackrel{\phi}{\to} \BBB[q],P \to M,\chi)$ where:
\begin{enumerate}
\item $q: M \to \BBB_0$ is a surjective submersion,
\item $ ( \mathcal{R} \stackrel{\phi}{\to} \BBB[q],P \to M,\chi)$ in a $\G \to \HH$-extension of the pull-back groupoid
$ \BBB[q] \toto M  $ of $\BBB \toto \BBB_0  $ with respect to $q$.
\end{enumerate}

We define the pull-back of those.

\begin{defi}
The pull-back of a Lie groupoid $\G \to \HH$-extension $(q, \mathcal{R}\stackrel{\phi}{\to} \BBB[q],P \to M,\chi)$  over the Lie groupoid $\BBB \toto \BBB_0$ w.r.t the surjective submersion $p: M'\to M$ is the Lie groupoid $\G \to \HH$-extension $(q\circ p,Y \stackrel{\phi[p]}{\to} \BBB[q \circ p],p^* P \to M',\chi[p])$ over the Lie groupoid $\BBB \toto \BBB_0$.
\end{defi}

\begin{rem}
 The previous definition used implicitly the existence of a natural isomorphism $ \BBB[q][p] \simeq  \BBB[q \circ p]$:
  $$ \xymatrix{  & &\BBB[q][M',p] \ar@<2pt>[d] \ar@<-2pt>[d]  \ar@{<->}^-{\simeq}[r] &  \BBB[M', q \circ p] \ar@<2pt>[dl] \ar@<-2pt>[dl]\\ & \BBB[q] \ar@<2pt>[d] \ar@<-2pt>[d] & M' \ar[dl]^p \ar@/^2pc/[ddll]^{q \circ p}\\  \BBB\ar@<2pt>[d] \ar@<-2pt>[d] & M\ar[dl]^q \\ \BBB_{0}  }  $$
  Indeed, the pull-back of the $\G \to \HH$-extension
  $( \mathcal{R}\stackrel{\phi}{\to} \BBB[q],P \to M,\chi)$ with respect to $p$ is a priori a $\G \to \HH$-extension
  of $\BBB[q][p]$. But in view of the
  isomorphism $ \BBB[q][p] \simeq  \BBB[q \circ p]$, it can be considered as a $\G \to \HH$-extension
   of $\BBB[q\circ p] $, and $(q \circ p, \mathcal{R}[p] \stackrel{\phi[p]}{\to} \BBB[q \circ p],p^* P \to M,\chi[p])$ is a $\G \to \HH$-extension over $\BBB \toto \BBB_0 $.
\end{rem}

\begin{rem}
As mentioned in Remark \ref{rem:linkGinotStienon}, and as pointed to us by the referee, a $\G\to \HH$ bundle over a Lie groupoid $\BBB$ in the sense of \cite{GinotStienon} will give in general a $\G \to \HH$-extension, but of a Lie groupoid $\GGG$ which is the pull-back of $\BBB$ through some surjective submersion onto the base manifold $\BBB_0$ of $\BBB $. This can be seen by using the fact that a morphism of $2$-groupoid
from a Lie groupoid $\GGG$ to the crossed module $\G \to \HH$ (seen as a Lie $2$-group) is in fact a simplicial map from the simplicial tower
of $\GGG$ to the simplicial tower of $\G \to \HH$. In turn, such a simplicial map is determined by a map $\varphi_1: \GGG \to \HH$
and a map from compatible pairs $\GGG_2$ to $\HH^2 \times \G$ of the form $(g_1,g_2) \to (\varphi(g_1),\varphi(g_2), \varphi_2(g_1,g_2))$
  The maps $\varphi_1$ and $ \varphi_2$ give a structure of Lie groupoid extension
by taking for $\HH$-principal bundle the set $P=M \times \HH$ (with $M$ the base manifold of $\GGG$) and ${\mathcal R} =  \GGG \times \G$.
The action of ${\mathcal R} =  \GGG \times \G$ on $P=M \times \HH $ is then given by:
 $$ (\gamma,g) \cdot (m,h):= (n, \rho(g) \varphi_1(\gamma) h) $$
 for all $g \in \G,h \in \HH , \gamma \in \GGG $ with $s(\gamma)=n, t(\gamma)=m$,
  while the product of  ${\mathcal R} =  \GGG \times \G$ is given by:
 $$ (\gamma,g) \cdot(\gamma',g') =(\gamma\gamma', gg' \varphi_2(\gamma,\gamma')) $$
 for all compatible $\gamma,\gamma'\in \GGG $ and all $ g,g' \in \G$.
As a consequence, a $\G\to \HH$-bundle over a Lie groupoid $\BBB \to \BBB_0$ in the sense of \cite{GinotStienon} defines a $\G \to \HH$-extension
 over $\BBB \to \BBB_0$, making the correspondence of remark \ref{rem:linkGinotStienon} more precise.
\end{rem}

We can now define the notion of Morita equivalence that we are interested in.

\begin{defi}
A Morita equivalence between two Lie groupoid $\G \to \HH$-extensions $(q,{\mathcal R} \stackrel{\phi}{\to} \BBB[q],P \to M,\chi)$ and
$(q',{\mathcal R} \stackrel{\phi}{\to} \BBB[q'],P \to M,\chi)$ over $\BBB \toto \BBB_0 $ is a triple
$(M'',p,p') $ where $M''$ is a manifold, $p: M'' \to M $ and $q:M'' \to M'$ are surjective submersions, such that:
\begin{enumerate}
\item the following diagram commutes:
 $$ \xymatrix{ & M'' \ar[dr]_{p} \ar[dl]^{p'} & \\ M' \ar[dr]^{q'} & & M \ar[dl]_{q} \\ & B & \\ }  $$

 $$ q \circ p=q' \circ p', $$
\item the pull-back of the Lie groupoid $ \G \to \HH$-extension $(\mathcal R \stackrel{\phi}{\to} \BBB[q],P \to M,\chi)$ with respect to $p$ is isomorphic
to the pull-back of the Lie groupoid  $ \G \to \HH$-extension $(\mathcal R' \stackrel{\phi}{\to} \BBB[q'],P' \to M',\chi')$ with respect to $p'$
(notice that both pull-back Lie groupoid $\G \to \HH $-extensions are $\G \to \HH $-extensions of $\BBB[q'\circ p'] = \BBB[q \circ p]$).
\end{enumerate}
\end{defi}
In terms of commutative diagram, Morita equivalence of Lie groupoid $\G \to \HH$-extensions over $\BBB \toto \BBB_0 $
can be visualized as follows
$$ \xymatrix{ & p^* P \ar@/^3pc/[rrrr]_{\simeq}  \ar[ddrr] & \mathcal R[p] \ar@/_1pc/[rr]^{\simeq} \ar[dr] & & \mathcal R[p'] \ar[dl]& p'^* P' \ar[ddll] & \\
         P \ar[ddr] & \mathcal R \ar[d]& & \BBB [q \circ p]\approx \BBB [q'\circ p'] \ar@<1pt>[d] \ar@<-1pt>[d]  & & \mathcal R' \ar[d] & P' \ar[ddl]\\
          & \mathcal B[q] \ar@<1pt>[d] \ar@<-1pt>[d] & & M'' \ar@{.>}[dll]^p \ar@{.>}[drr]_{p'}  & & \mathcal B[q'] \ar@<1pt>[d] \ar@<-1pt>[d] &\\
           & M \ar[drr]^{q} & & \mathcal B \ar@<1pt>[d] \ar@<-1pt>[d] & & M' \ar[dll]_{q'} & \\
            & & & \mathcal B_{0} & & & }$$
          \begin{examp} \label{ex:isom}
A pair  $(q,\mathcal R \stackrel{\phi}{\to} \BBB[q],P \to M,\chi)$
and  $(q,\mathcal R' \stackrel{\phi}{\to} \BBB[q],P'\to M,\chi)$ of
$\G \to \HH$-extensions over $\BBB \toto \BBB_0 $ which are isomorphic over the identity of
$\BBB[q]$ are Morita equivalent.
\end{examp}
\begin{examp} \label{ex:pullback}

Every Lie groupoid $\G \to \HH$-extension over a Lie groupoid is Morita equivalent to its pull back with respect to a (generalized) surjective submersion.
\end{examp}

We can not say, strictly speaking, that Morita equivalence of $\G \to \HH$-extensions over a given Lie groupoid
$\BBB \toto \BBB_0 $ is an equivalence relation because $\G \to \HH$-extensions over $\BBB \toto \BBB_0 $ do not form a set. However, the axioms of equivalence relations remain satisfied, as shown in the next proposition

\begin{prop}
Let $\BBB \toto \BBB_0 $ be a Lie groupoid.
\begin{enumerate}
\item A $\G \to \HH$-extension over $\BBB \toto \BBB_0 $ is always Morita equivalent to itself.
\item Let $Ext_1,Ext_2$ be $\G \to \HH$-extensions over $\BBB \toto \BBB_0 $. $Ext_1$ is Morita equivalent to $Ext_2$
if and only if $ Ext_2$ is Morita equivalent to $Ext_1$.
\item Let $Ext_1,Ext_2,Ext_3$ be $\G \to \HH$-extensions over $\BBB \toto \BBB_0 $. If
$ Ext_1$ is Morita equivalent to $Ext_2$ and $ Ext_2$ is Morita equivalent to $Ext_3$, then $ Ext_1$ is Morita equivalent to $Ext_3$.
\end{enumerate}
\end{prop}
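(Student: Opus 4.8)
The three claims are exactly reflexivity, symmetry, and transitivity of the relation, so the plan is to dispatch the first two almost immediately and to concentrate on the third.

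\emph{Reflexivity and symmetry.} Given a $\G \to \HH$-extension $(q,\mathcal R \stackrel{\phi}{\to} \BBB[q], P \to M, \chi)$ over $\BBB \toto \BBB_0$, I would exhibit the triple $(M, \id_M, \id_M)$ as a self Morita equivalence: condition (1) is the trivial identity $q \circ \id_M = q \circ \id_M$, and condition (2) holds because the pull-back of a Lie groupoid $\G \to \HH$-extension along the identity is canonically isomorphic to the extension itself (the fibered products in the pull-back construction collapse onto the original data). For symmetry one simply observes that the definition is manifestly symmetric in the two extensions: if $(M'', p, p')$ witnesses that $Ext_1$ is Morita equivalent to $Ext_2$, then $(M'', p', p)$ witnesses the converse, since condition (1) reads $q \circ p = q' \circ p'$ either way and condition (2) asks for an isomorphism of two pull-back extensions, which is itself a symmetric relation.

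\emph{Transitivity: the construction.} Write $Ext_i = (q_i, \mathcal R_i \stackrel{\phi_i}{\to} \BBB[q_i], P_i \to M_i, \chi_i)$ for $i = 1,2,3$, and suppose we are given a Morita equivalence $(N, a_1, a_2)$ from $Ext_1$ to $Ext_2$ and a Morita equivalence $(N', b_2, b_3)$ from $Ext_2$ to $Ext_3$, so that $a_2 : N \to M_2$ and $b_2 : N' \to M_2$ are surjective submersions. I would form the fibered product $\widetilde M := N \times_{a_2, M_2, b_2} N'$, a smooth manifold by Lemma \ref{lem:manifold}, with projections $\mathrm{pr} : \widetilde M \to N$ and $\mathrm{pr}' : \widetilde M \to N'$; being base changes of the surjective submersions $b_2$ and $a_2$, both projections are themselves surjective submersions. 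Setting $p := a_1 \circ \mathrm{pr}$ and $p' := b_3 \circ \mathrm{pr}'$ (surjective submersions as composites of such), I claim $(\widetilde M, p, p')$ is the required Morita equivalence from $Ext_1$ to $Ext_3$. Condition (1) follows by chasing $q_1 \circ a_1 = q_2 \circ a_2$ and $q_3 \circ b_3 = q_2 \circ b_2$ around the defining square of $\widetilde M$, where $a_2 \circ \mathrm{pr} = b_2 \circ \mathrm{pr}'$, giving $q_1 \circ p = q_2 \circ a_2 \circ \mathrm{pr} = q_2 \circ b_2 \circ \mathrm{pr}' = q_3 \circ p'$.

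\emph{Transitivity: condition (2).} Here I would invoke two functoriality properties of the pull-back of $\G \to \HH$-extensions: that pull-back along a composite factors, up to canonical isomorphism, as iterated pull-back, and that pulling back two isomorphic extensions along the same map yields isomorphic extensions. Granting these, the pull-back of $Ext_1$ along $p = a_1 \circ \mathrm{pr}$ is isomorphic to the $\mathrm{pr}$-pull-back of the $a_1$-pull-back of $Ext_1$, which by the first Morita equivalence is isomorphic to the $\mathrm{pr}$-pull-back of the $a_2$-pull-back of $Ext_2$, hence to the pull-back of $Ext_2$ along $a_2 \circ \mathrm{pr}$. The symmetric argument identifies the pull-back of $Ext_3$ along $p'$ with the pull-back of $Ext_2$ along $b_2 \circ \mathrm{pr}'$. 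Since $a_2 \circ \mathrm{pr} = b_2 \circ \mathrm{pr}'$ by construction of $\widetilde M$, these two middle extensions coincide, and composing the chain of isomorphisms yields the desired isomorphism. The only place requiring genuine care, and thus the main obstacle, is establishing the two functoriality properties at the level of the \emph{full} extension data rather than the underlying groupoids: the base-groupoid isomorphism $\BBB[q][p] \simeq \BBB[q \circ p]$ is recorded in the remark following the pull-back definition, and Proposition \ref{prop:pullbackExtensions} controls the kernel and the band, so what remains is to check that these identifications are compatible with the principal $\HH$-bundle $P$ and the band map $\chi$, and that an isomorphism $(\Phi,\Psi)$ of $\G \to \HH$-extensions pulls back to one between the pulled-back extensions. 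These are routine diagram chases on the explicit formula $(n,r,n')\bullet(x,n') = (r\bullet x, n)$ for the pull-back action and on the formula for $\chi[p]$, but they are precisely what legitimizes the chain above.
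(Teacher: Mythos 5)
Your proposal is correct and follows essentially the same route as the paper: the first two items are dispatched as trivial, and for transitivity you form exactly the fibered product $M \times_{M_2} M'$ over the middle base manifold that the paper uses, equipped with the composed surjective submersions. Your explicit identification of the two functoriality properties of pull-backs (composite pull-back versus iterated pull-back, and preservation of isomorphisms) is precisely the content of the ``cumbersome but easy computation'' that the paper leaves unspelled, so if anything your argument is slightly more detailed than the original.
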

\begin{proof}
Only the third item merits some justification. If $M$, together with the surjective submersions $p,q$ give a
Morita equivalence between $Ext_1 $ and $Ext_2 $ while $M' $ together with the surjective submersions $p',q'$ give a
Morita equivalence between $Ext_2 $ and $Ext_3 $, then we introduce $ M'':= M \times_{q,M_2,p'} M' $
and equip it with the surjective submersions $(m,m') \to p(m) $ and $(m,m') \to q'(m') $ onto $M_1$
and $M_2 $ respectively, where, in the previous, $M_i, i=1,2,3 $ is the base manifold of the $\G \to \HH$-extension $Ext_i $.
A cumbersome but easy computation shows that the pull-back of $Ext_1 $ and $Ext_3 $ to $ M'' $ are isomorphic Lie groupoid $\G \to \HH$-extensions.
\end{proof}

This proposition allows one to give, at last, the following definition.

\begin{defi}
A $\G \to \HH$-gerbe over $ \BBB \toto \BBB_0$ is a Morita equivalence class of Lie groupoid $\G \to \HH$-extensions over $ \BBB \toto \BBB_0$.
\end{defi}

To justify this definition, we shall in subsection \ref{subsec:manifoldcase} show that, when the Lie groupoid $ \BBB \toto \BBB_0$ is simply a manifold
 Lie groupoid $ \BBB \toto \BBB_0$, $\G \to \HH$-gerbe are precisely the same thing as  $\G \to \HH$-valued non-Abelian $1$-cohomology.

\subsection{The manifold case: $\G \to \HH$-gerbes as non-Abelian $1$-cohomology}\label{subsec:manifoldcase}

The notion of  $\G \to \HH$ non-Abelian $1$-cohomology w.r.t. a given open covering was introduced in section \ref{subsec:mfdcase} .
As usual, $\G \to \HH$ non-Abelian $1$-cohomology is obtained by inductive limits of those.
More precisely, we proceed as follows. By a \emph{refinement} of an open cover ${\mathcal U}=(U_i)_{i \in I} $, we mean a pair $ ({\mathcal V},\sigma)$ made of an open cover $ {\mathcal V}=(V_j)_{j \in J}$ together with a map $\sigma: J \to I $ such that $ V_j \subset U_{\sigma (j)} $ for all $j \in J$.
Notice that $\sigma $ induces a map, again denoted by $\sigma $, from $\coprod_{k,l \in J}V_{kl} $ to
$\coprod_{i,j\in I}U_{ij} $ (resp. $\coprod_{k,l,m \in J}V_{klm} $ to
$\coprod_{i,j,k\in I}U_{ijk} $), obtained by mapping $x_{kl} \in V_{kl}  $
to $x_{\sigma(k)\sigma(l)} \in U_{\sigma(k)\sigma(l)} $ (using the notations of section \ref{notation:Cech}).
By the \emph{pull-back} of a non-Abelian $1$-cocycle $(\lambda,\gggg)   \in \mathcal C ^{\infty}( \coprod_{i,j \in I}V_{ij},\HH ) \times \mathcal C ^{\infty} (
\coprod_{i,j,k \in J}V_{ijk},\G)  $ w.r.t. ${\mathcal U}$, we mean
the pair of functions $(\sigma^* \lambda,\sigma^* \gggg )$ in $\mathcal C ^{\infty}( \coprod_{i,j \in I}V_{ij},\HH ) \times \mathcal C ^{\infty} (
\coprod_{i,j,k \in J}V_{ijk},\G)  $. Notice that, by construction, $ (\sigma^* \lambda)_{ij} = \left. \lambda_{\sigma(i)\sigma(j)} \right|_{V_{ij}} $
and
 $  (\sigma^* \gggg)_{ijk} =  \left.\gggg_{\sigma(i)\sigma(j)\sigma(k)} \right|_{V_{ijk}} $
 for all $i,j,k \in J $.

\begin{lem}
Let  $({\mathcal V},\sigma)$  be a refinement of ${\mathcal U} $.
The pull-back of a $\G \to \HH $-valued non-Abelian $1$-cocycle w.r.t. ${\mathcal U} $  is   a $\G \to \HH $-valued non-Abelian $1$-cocycle w.r.t ${\mathcal V} $. Moreover, two $\G \to \HH $-valued non-Abelian $1$-cocycles that differ by a coboundary have pull-back that differ by a coboundary again.
\end{lem}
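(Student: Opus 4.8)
The plan is to prove both assertions by direct substitution, resting on two elementary observations. First, for all indices $i_1,\dots,i_n \in J$ one has the inclusion $V_{i_1\dots i_n} \subset U_{\sigma(i_1)\dots\sigma(i_n)}$, which follows from $V_j \subset U_{\sigma(j)}$ together with the convention $U_{\sigma(i_1)\dots\sigma(i_n)} = U_{\sigma(i_1)}\cap\dots\cap U_{\sigma(i_n)}$; hence restricting any function defined on a $U$-intersection to the corresponding $V$-intersection is legitimate. Second, both the cocycle relations (\ref{nonAbe}) and the coboundary relations (\ref{eq:relate}) are pointwise algebraic identities in $\G$ and $\HH$, so they are automatically preserved under restriction to a smaller open set.

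For the first assertion I would fix a non-Abelian $1$-cocycle $(\lambda,\gggg)$ w.r.t. $\mathcal U$ and recall that, by construction, $(\sigma^*\lambda)_{ij} = \lambda_{\sigma(i)\sigma(j)}|_{V_{ij}}$ and $(\sigma^*\gggg)_{ijk} = \gggg_{\sigma(i)\sigma(j)\sigma(k)}|_{V_{ijk}}$. Then I would verify the three conditions of (\ref{nonAbe}) in turn. The first follows by restricting the identity $\rho(\gggg_{\sigma(i)\sigma(j)\sigma(k)})\lambda_{\sigma(i)\sigma(k)} = \lambda_{\sigma(i)\sigma(j)}\lambda_{\sigma(j)\sigma(k)}$, valid on $U_{\sigma(i)\sigma(j)\sigma(k)}$, to the subset $V_{ijk}$; the second by restricting the corresponding relation at indices $\sigma(i),\sigma(j),\sigma(k),\sigma(l)$ to $V_{ijkl}$; and the third is immediate from $\gggg_{\sigma(i)\sigma(i)\sigma(j)} = \e$, which gives $(\sigma^*\gggg)_{iij}=\e$.

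For the second assertion, suppose a $\G\to\HH$-valued $1$-coboundary $(r,\vvvv)$ relates $(\lambda,\gggg)$ and $(\lambda',\gggg')$. I would define its pull-back by $(\sigma^* r)_i = r_{\sigma(i)}|_{V_i}$ and $(\sigma^*\vvvv)_{ij} = \vvvv_{\sigma(i)\sigma(j)}|_{V_{ij}}$, and claim that $(\sigma^* r,\sigma^*\vvvv)$ relates the pull-backs $(\sigma^*\lambda,\sigma^*\gggg)$ and $(\sigma^*\lambda',\sigma^*\gggg')$ (which are genuine cocycles by the first assertion). Exactly as above, relation $(*)$ of (\ref{eq:relate}) for the pulled-back data is obtained by restricting $(*)$ for $(r,\vvvv)$ at indices $\sigma(i),\sigma(j)$ to $V_{ij}$, and relation $(**)$ by restricting $(**)$ at indices $\sigma(i),\sigma(j),\sigma(k)$ to $V_{ijk}$.

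The only genuine care required is bookkeeping: one must confirm that the index map $\sigma$ respects the intersection conventions even when it is not injective (for instance $U_{\sigma(i)\sigma(i)} = U_{\sigma(i)}$ is compatible with $V_{ii} = V_i$), and that each restriction lands in the correct domain, which is precisely the content of the inclusion $V_{i_1\dots i_n} \subset U_{\sigma(i_1)\dots\sigma(i_n)}$. There is no analytic or structural obstacle whatsoever; the entire substance is that $\sigma$ intertwines the $U$-indexed and $V$-indexed data while restriction is a homomorphism on the relevant spaces of smooth functions, so the identities transport verbatim.
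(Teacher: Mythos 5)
Your proof is correct and is precisely the routine verification the paper has in mind (the lemma is in fact stated there without proof, being regarded as immediate): since $V_{i_1\dots i_n}\subset U_{\sigma(i_1)\dots\sigma(i_n)}$ and the cocycle relations (\ref{nonAbe}) and coboundary relations (\ref{eq:relate}) are pointwise identities, they are preserved under restriction along $\sigma$. Your attention to the non-injective case ($\gggg_{\sigma(i)\sigma(i)\sigma(j)}=\e$ giving the normalization condition for the pull-back) covers the only point that needed explicit checking.
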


We now identify two $\G \to \HH $-valued non-Abelian $1$-cocycles $(\lambda, \gggg  ) $ and $(\lambda', \gggg') $, defined on covering $ \mathcal U  $ and $\mathcal U' $ of $N$ respectively, if there exists a common refinement of both
$ \mathcal U  $ and $\mathcal U' $  such that the pull-back to that refinement of
 $(\lambda, \gggg  ) $ and $(\lambda', \gggg') $ differ by a coboundary. We denote by $H^1(\G \to \HH ) $ the set henceforth obtained and we call this set the \emph{$\G \to \HH $-valued non-Abelian $1$-cohomology on $N$}. A priori, $H^1(\G \to \HH ) $ has no group structure.

We can now state the main result of this section.

\begin{them}\label{th:coho=gerbes}
Let $N$ be a manifold. There is a one-to-one correspondence between:
\begin{enumerate}
\item $\G \to \HH$-valued non-Abelian $1$-cohomology on $N$,
\item $\G \to \HH$ gerbes over $N \toto N $.
\end{enumerate}
\end{them}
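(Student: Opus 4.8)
The plan is to build a single map $\Theta$ from non-Abelian $1$-cohomology to gerbes and to prove it is well defined, surjective, and injective; since these three properties already yield a bijection, no separate inverse need be constructed. Everything is reduced to the good-cover case settled at the end of Section \ref{sec:GpdExt}. I use two standing conventions. First, good covers (those with all $U_{ij}$ contractible, hence with connected pieces) are cofinal among open covers of $N$, so $H^1(\G \to \HH)$ may be computed as the inductive limit taken over good covers alone. Second, the pull-back of the trivial groupoid $N \toto N$ along a surjective submersion $q : M \to N$ is the groupoid $M \times_N M \toto M$, the \v{C}ech groupoid $\Cechs$ being exactly the case $M = \coprod_i U_i$ with $q$ the tautological map.

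To define $\Theta$, represent a cohomology class by a cocycle $(\lambda, \gggg)$ on a good cover $\mathcal U$. By Proposition \ref{prop:cocycles} it gives an adapted $\G \to \HH$-extension $A$ of $\Cechs$, which, $\Cechs$ being a pull-back of $N \toto N$, is a $\G \to \HH$-extension over $N \toto N$; set $\Theta$ of the class to be the Morita class of $A$. Well-definedness rests on one routine fact, which I isolate as a lemma: the pull-back of an adapted extension of $N[\mathcal U]$ along a refinement map $\coprod_k V_k \to \coprod_i U_i$, $x_k \mapsto x_{\sigma(k)}$ (with $V_k \subset U_{\sigma(k)}$), is again an adapted extension, whose associated cocycle is the pulled-back cocycle $(\sigma^* \lambda, \sigma^* \gggg)$; this is a direct check from the explicit formulas of Proposition \ref{prop:cocycles}. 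Granting the lemma, comparing cocycles on a common refinement corresponds on the extension side to a base-change pull-back, hence to a Morita equivalence by Example \ref{ex:pullback}, while a coboundary relating two cocycles on the same cover corresponds, by Proposition \ref{prop:coboundaries}, to an isomorphism of adapted extensions, hence to a Morita equivalence by Example \ref{ex:isom}. Therefore cohomologous cocycles define the same gerbe and $\Theta$ is well defined.

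For surjectivity, let $E = (q, \mathcal R \to \BBB[q], P \to M, \chi)$ be an arbitrary $\G \to \HH$-extension over $N \toto N$. Since a submersion admits local sections, I may choose a good cover $\mathcal U = (U_i)$ of $N$ together with sections $s_i : U_i \to M$ of $q$; then $\iota := \coprod_i s_i : \coprod_i U_i \to M$ satisfies $q \circ \iota = $ the tautological map $\coprod_i U_i \to N$ and is a generalized surjective submersion for $M \times_N M \toto M$ in the sense of Lemma \ref{lem: generalized surjective submersion} (the needed surjectivity holds because the $U_i$ cover $N$ and the $s_i$ are sections of $q$). Pulling $E$ back along $\iota$ yields a $\G \to \HH$-extension of $N[q \circ \iota] = \Cechs$, which by Proposition \ref{prop:everyextensionisadapted} is isomorphic to an adapted extension $A$, and $A$ carries a cocycle $c$ by Proposition \ref{prop:cocycles}. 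As $A$ is isomorphic to $\iota^* E$, and $\iota^* E$ is a pull-back of $E$, Examples \ref{ex:isom} and \ref{ex:pullback} show $A$ is Morita equivalent to $E$; hence $\Theta$ of the class of $c$ is the gerbe of $E$, and $\Theta$ is onto.

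The substance of the proof, and the step I expect to be the main obstacle, is injectivity: one must convert an abstract Morita equivalence into a cohomology relation. Suppose $A_1, A_2$ are adapted extensions of \v{C}ech groupoids $N[\mathcal U_1], N[\mathcal U_2]$ with cocycles $c_1, c_2$, Morita equivalent over $N \toto N$ through a manifold $M''$ with submersions $p_1, p_2$ onto the bases $\coprod_i U^1_i, \coprod_j U^2_j$, satisfying $\pi_1 \circ p_1 = \pi_2 \circ p_2 =: r : M'' \to N$ (the $\pi$'s being tautological) and $p_1^* A_1 \cong p_2^* A_2$ over $N[r]$. Choose a good cover $\mathcal W = (W_a)$ of $N$ with sections $t_a : W_a \to M''$ of $r$, and put $\tau := \coprod_a t_a$. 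The crucial observation is that, each $W_a$ being connected and mapping tautologically to $N$, the composite $p_1 \circ \tau : \coprod_a W_a \to \coprod_i U^1_i$ is forced to have the form $x_a \mapsto x_{\sigma_1(a)}$ of a refinement map, with $W_a \subset U^1_{\sigma_1(a)}$, and similarly for $p_2 \circ \tau$. By the compatibility lemma the pulled-back extensions $(p_1 \tau)^* A_1$ and $(p_2 \tau)^* A_2$ are adapted with cocycles $\sigma_1^* c_1$ and $\sigma_2^* c_2$; being isomorphic (the isomorphism $p_1^* A_1 \cong p_2^* A_2$ pulls back to one), they differ by a coboundary on $\mathcal W$ by Proposition \ref{prop:coboundaries}. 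Since $\sigma_1^* c_1$ and $\sigma_2^* c_2$ are cohomologous to $c_1$ and $c_2$ by the very definition of the inductive limit, we obtain $[c_1] = [c_2]$ in $H^1(\G \to \HH)$, proving injectivity. Combining the three properties, $\Theta$ is the desired bijection. The delicate points to watch are precisely the compatibility lemma and the claim that the induced base maps are genuine refinement maps; both are elementary but must be verified with care.
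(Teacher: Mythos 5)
Your proposal is correct and follows essentially the same route as the paper: the same map built from Proposition \ref{prop:cocycles}, well-definedness via the compatibility of pull-backs with the cocycle correspondence (the paper's Lemma \ref{lem:lemma A}) and Proposition \ref{prop:coboundaries}, surjectivity via local sections plus Proposition \ref{prop:everyextensionisadapted}, and injectivity by producing a common refinement mapping into the Morita bimodule $M''$ and descending the isomorphism to a coboundary. Your only departures are presentational: you work with good covers from the start (justified by cofinality) and you make explicit, via sections of $r:M''\to N$ and connectedness of the $W_a$, the ``common refinement'' property that the paper merely asserts.
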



The proof of the theorem requires two lemmas. For the first one, recall from proposition \ref{prop:cocycles}
that, given an open covering  $\mathcal U $ of $N$, there is a one-to-one correspondence between
non-Abelian $1$-cocycles and adapted extensions of the \v{C}ech groupoid $N[\mathcal U]$.

\begin{lem}\label{lem:lemma A}
Let  $({\mathcal V},\sigma) $ be a refinement of ${\mathcal U}$ and $(\lambda,\gggg)$ be a non-Abelian $1$-cocycle w.r.t.~${\mathcal U}$. Then the adapted Lie groupoid $\G \to \HH$-extension associated to the pull-back of the non-Abelian $1$-cocycle $(\lambda, \gggg) $ is isomorphic (as a $\G \to \HH$-extension) to the pull-back of the adapted Lie groupoid $\G \to \HH$-extension associated to $(\lambda, \gggg) $. This can be expressed as the commutativity (up to a a canonical isomorphism) of the diagram:

$$ \xymatrix{*\txt{non.Ab. $1$-cocycle \\ w.r.t ${\mathcal V}$ } \ar@{<->}[rrr]^{\txt {corresp. of \\prop. \ref{prop:cocycles}}} &&& *\txt{adapted ext. of \\ $N[ {\mathcal V}]$ }   \\
&&& \\
             *\txt{ non.Ab. $1$-cocycle \\ w.r.t ${\mathcal U}$ }\ar[uu]^{pull-back} \ar@{<->}[rrr]^{\txt {corresp. of \\prop. \ref{prop:cocycles}}} &&& *\txt{adapted ext. of \\ $N[ {\mathcal U}]$ } \ar[uu]_{pull-back}}$$

\end{lem}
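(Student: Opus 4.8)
The plan is to make both routes around the square completely explicit and to observe that, under the canonical identifications, they produce the \emph{same} adapted $\G\to\HH$-extension, so that the two vertical ``pull-back'' arrows are intertwined by the identity maps. First I would reinterpret the refinement at the level of objects: the inclusions $V_k\subset U_{\sigma(k)}$ assemble into a map $p:\coprod_{k\in J}V_k\to\coprod_{i\in I}U_i$, $x_k\mapsto x_{\sigma(k)}$, satisfying $q_{\mathcal U}\circ p=q_{\mathcal V}$, where $q_{\mathcal U},q_{\mathcal V}$ are the canonical projections to $N$. Since $q_{\mathcal V}$ is a surjective submersion, $p$ is a generalized surjective submersion for $\Cechs$, and the natural isomorphism $\BBB[q][p]\simeq\BBB[q\circ p]$ (applied to $\BBB=(N\toto N)$ and $q=q_{\mathcal U}$) identifies $\Cechs[p]$ with $N[{\mathcal V}]$. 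This is exactly what gives meaning to the right-hand vertical arrow: the pull-back along $p$ of an adapted extension of $\Cechs$ is a $\G\to\HH$-extension of $\Cechs[p]\simeq N[{\mathcal V}]$.

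Next I would compute the bottom-then-right route. Applying item 2 of Proposition \ref{prop:cocycles} to the pulled-back cocycle $\sigma^*(\lambda,\gggg)$, whose components are $(\sigma^*\lambda)_{kl}=\lambda_{\sigma(k)\sigma(l)}|_{V_{kl}}$ and $(\sigma^*\gggg)_{klm}=\gggg_{\sigma(k)\sigma(l)\sigma(m)}|_{V_{klm}}$, yields the adapted extension on $\G\times\coprod_{k,l}V_{kl}$ and $\coprod_k V_k\times\HH$ whose product, action and band map read
$$(g,x_{kl})\bullet(g',x_{lm})=(g\,\lambda_{\sigma(k)\sigma(l)}(g')\,\gggg_{\sigma(k)\sigma(l)\sigma(m)},x_{km}),$$
$$(g,x_{kl})\star(x_l,h)=(x_k,\rho(g)\lambda_{\sigma(k)\sigma(l)}h),\qquad \chi(x_k,h)=(\jmath(h),x_k).$$

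Then I would compute the right-then-bottom route by unwinding the pull-back of the adapted extension $(\mathcal U,\bullet,\star)$ along $p$, as defined in Section \ref{sec:MoritaExt}. A constraint analysis of the fibered products shows that the elements of $\mathcal R[p]$ are exactly the triples $(x_k,(g,x_{\sigma(k)\sigma(l)}),x_l)$ with $g\in\G$ and $x\in V_{kl}$ (admissibility being guaranteed by $V_{kl}\subset U_{\sigma(k)\sigma(l)}$), and that those of $p^*P$ are the pairs $((x_{\sigma(l)},h),x_l)$ with $x\in V_l$; the canonical identifications $\mathcal R[p]\simeq\G\times\coprod_{k,l}V_{kl}$ and $p^*P\simeq\coprod_k V_k\times\HH$ send these to $(g,x_{kl})$ and $(x_l,h)$ respectively. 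Transporting the triple product $(x_k,r,x_l)\bullet(x_l,r',x_m)=(x_k,r\bullet r',x_m)$, the induced action $(x_k,r,x_l)\cdot(x_{\sigma(l)},h)=(r\star(x_{\sigma(l)},h),x_k)$, and the pulled-back band map $\chi[p]$ (the last via the canonical band identification of item 3 of Proposition \ref{prop:pullbackExtensions}) through these identifications, each reduces, using the defining formulas of $(\mathcal U,\bullet,\star)$, to the very formulas displayed above, term for term. Hence the identity maps on $\G\times\coprod_{k,l}V_{kl}$ and on $\coprod_k V_k\times\HH$ constitute an isomorphism of $\G\to\HH$-extensions of $N[{\mathcal V}]$ between the two routes, which is precisely the asserted commutativity of the square up to canonical isomorphism.

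These computations are all routine; the only steps demanding care are the constraint analysis pinning down $\mathcal R[p]$ and $p^*P$ (where one must track the indices $\sigma(k),\sigma(l)$ and check admissibility through $V_{kl}\subset U_{\sigma(k)\sigma(l)}$) and the verification that the canonical band isomorphism of Proposition \ref{prop:pullbackExtensions} matches $\chi[p]$ with $\chi(x_k,h)=(\jmath(h),x_k)$. I expect the bookkeeping with the double indexing $(k,l)\leftrightarrow(\sigma(k),\sigma(l))$ to be the main, purely administrative, obstacle: there is no conceptual difficulty, since both constructions are ultimately dictated by the same cocycle formulas of Proposition \ref{prop:cocycles}.
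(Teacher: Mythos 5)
Your proposal is correct and follows essentially the same route as the paper: the paper's proof defines exactly the identifications you describe, namely $\psi(x_k,(g,x_{\sigma(k)\sigma(l)}),x_l)=(g,x_{kl})$ on the pulled-back groupoid and $\psi'(x_k,(x_{\sigma(k)},h))=(x_k,h)$ on the pulled-back principal bundle, and then leaves to the reader the verification that $(\psi,\psi',\id_{\HH})$ is an isomorphism of $\G\to\HH$-extensions. Your write-up simply carries out that verification explicitly by matching the transported product, action and band map against the formulas of Proposition \ref{prop:cocycles} applied to $\sigma^*(\lambda,\gggg)$, which is exactly the intended (routine) check.
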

\begin{proof}
Let  $(\mathcal U , \bullet , \star)$  (resp. $(\mathcal V , \bullet' , \star')$) be the adapted Lie groupoid $\G \to \HH$-extension associated to the $\G \to \HH$-valued non-Abelian $1$-cocycle $(\lambda,\gggg)$ (resp. $(\lambda',\gggg')$, the pull-back of $(\lambda,\gggg)$ w.r.t. $\sigma$). Let $\sigma $ stand for the map $ \coprod_{j \in J} V_j\to \coprod_{i\in I} U_i$ (resp. $ \coprod_{k,l \in J} V_{kl}\to \coprod_{i,j\in I} U_{ij}$). The pull-back Lie groupoid  $ (\G \times \coprod_{i,j \in I}U_{ij})[\sigma] $
is isomorphic to $\G \times \coprod_{i,j \in  J}V_{ij}$ through the isomorphism defined for all $i,j \in J, x \in V_{ij}, g \in \G$ by
 $$ \psi (x_{i} ,(g,x_{\sigma(i)\sigma(j)}),x_{j}):=(g,x_{ij}),$$
 and the map  $$\psi' (x_{i} ,(x_{\sigma(i)},h)):=(x_i, h)$$ is an isomorphism between the pull-back of $\coprod_{i \in I} U_i \times \HH $  through $\sigma $
  and $ \coprod_{j\in J} V_j \times \HH $. We leave it to reader to prove that $(\psi,\psi',id_{\HH})$ is an isomorphism of Lie groupoid $\G \to \HH$-extensions.
\end{proof}

The next lemma shall also have its importance. The reader can replace the Lie groupoid $\BBB \toto \BBB_0$ by $N \toto N $ for the sake of simplicity, since we shall only use the lemma in that case.


\begin{lem} \label{lem:lemma B}
Let $(q,{\mathcal R} \stackrel{\phi}{\to} \BBB[q],P \to M,\chi)$ be a Lie groupoid $\G \to \HH$-extension over $\BBB \toto \BBB_0$. Let $\tau: M' \to M $ be a map such that $ q \circ \tau $ is a surjective submersion.
Then:
\begin{enumerate}
\item $\tau $ is a generalized surjective submersion for both Lie groupoids $ \mathcal{R}\toto M $ and $\BBB[q] \toto M $.
\item $( q \circ \tau, \mathcal{R}[\tau] \stackrel{\phi[\tau]}{\to}\BBB[q \circ \tau ],\tau^* P \to M', \chi[\tau])$
is a Lie groupoid $\G \to \HH$-extension.
\item This Lie groupoid $\G \to \HH$-extension is Morita equivalent (over the identity of $\BBB \toto \BBB_0$) to $(q,\mathcal{R} \stackrel{\phi}{\to} \BBB[q],P \to M,\chi)$.
  \end{enumerate}
\end{lem}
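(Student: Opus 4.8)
\emph{Plan.} The three assertions decrease in concreteness, so I would treat them in order, the last being where the real work lies. For item 1, I would use the criterion of Lemma \ref{lem: generalized surjective submersion}: $\tau$ is a generalized surjective submersion for $\BBB[q]\toto M$ precisely when the map $M'\times_{\tau,M,s}\BBB[q]\to M$, $(m',b)\mapsto t(b)$, is a surjective submersion. Since $s(x,\beta,y)=x$ on $\BBB[q]=M\times_{q,\BBB_0,s}\BBB\times_{t,\BBB_0,q}M$, the domain is canonically $M'\times_{q\circ\tau,\BBB_0,s}\BBB\times_{t,\BBB_0,q}M$ and the map in question is $(m',\beta,y)\mapsto y$. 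I would factor it as $(m',\beta,y)\mapsto(\beta,y)\mapsto y$: the first arrow is the projection off a fibered product over $q\circ\tau$, hence a surjective submersion because $q\circ\tau$ is one by hypothesis; the second is the projection $\BBB\times_{t,\BBB_0,q}M\to M$, a surjective submersion because the target map $t$ of $\BBB$ is. A composite of surjective submersions being a surjective submersion, $\tau$ is a generalized surjective submersion for $\BBB[q]$. The statement for $\mathcal{R}\toto M$ is then free: as recorded earlier in this section, $\tau$ is a generalized surjective submersion for $\BBB[q]$ if and only if it is one for $\mathcal{R}$, both amounting to the same condition through $\phi$.

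Item 2 is essentially bookkeeping. Once item 1 grants that $\tau$ is a generalized surjective submersion, Proposition \ref{prop:pullbackExtensions} produces the $\G$-extension $\mathcal{R}[\tau]\stackrel{\phi[\tau]}{\to}\BBB[q][\tau]$ together with the canonical identifications of its kernel and band with the pull-backs of those of $\mathcal{R}\to\BBB[q]$; transporting $P$ to $\tau^*P$ and $\chi$ to $\chi[\tau]$ exactly as in the construction of the pull-back of a $\G\to\HH$-extension yields a $\G\to\HH$-extension of $\BBB[q][\tau]$. Using the natural isomorphism $\BBB[q][\tau]\simeq\BBB[q\circ\tau]$ I would reread this as a $\G\to\HH$-extension of $\BBB[q\circ\tau]$, and since $q\circ\tau$ is a genuine surjective submersion this is precisely a $\G\to\HH$-extension over $\BBB\toto\BBB_0$ with base map $q\circ\tau$.

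For item 3, the extension just built is by construction the pull-back of $(q,\mathcal{R}\stackrel{\phi}{\to}\BBB[q],P\to M,\chi)$ along $\tau$, so the assertion is an instance of Example \ref{ex:pullback}; the content is to exhibit the Morita triple honestly, bearing in mind that $\tau$ itself need not be a surjective submersion. I would set $M''=M'\times_{q\circ\tau,\BBB_0,q}M$ with the two projections $p':M''\to M'$ and $p:M''\to M$, both surjective submersions (pull-backs of $q$ and of $q\circ\tau$), so that $q\circ p=(q\circ\tau)\circ p'$ and the first axiom of Morita equivalence holds. The pull-back of the original extension along $p$ and the pull-back of the new extension along $p'$ both live over $\BBB[q\circ p]=\BBB[q\circ\tau\circ p']$, the latter being the pull-back of the original along $\tau\circ p'$. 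The two maps $p,\tau\circ p':M''\to M$ agree after composing with $q$, so for each $w=(m',m)\in M''$ there is a canonical arrow $\theta(w)=(m,\varepsilon(q(m)),\tau(m'))\in\BBB[q]$ from $p(w)$ to $\tau p'(w)$, where $\varepsilon$ is the unit of $\BBB$, and conjugation by $\theta$ is the candidate isomorphism of the two pull-backs.

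The main obstacle is exactly this last isomorphism. Conjugation by $\theta$ is immediate on the $\BBB[q]$-level and transports the principal $\HH$-bundle and the map $\chi$ without trouble, but to twist the $\mathcal{R}$-part I need arrows of $\mathcal{R}$ lying over $\theta$, which exist only locally since $\phi$ is merely a surjective submersion. I would circumvent this by refining once more: replace $M''$ by $N''=M''\times_{\theta,\BBB[q],\phi}\mathcal{R}$, which carries a tautological lift $\hat\theta:N''\to\mathcal{R}$ of $\theta$ and still surjects submersively onto $M''$, hence onto $M$ and $M'$ through $\hat p$ and $\hat p'$ with $q\circ\hat p=(q\circ\tau)\circ\hat p'$. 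Over $N''$ conjugation by $\hat\theta$ now makes sense inside $\mathcal{R}$ and, together with the induced maps on $\tau^*P$ and $\chi[\tau]$, gives an isomorphism of $\G\to\HH$-extensions between the two pull-backs to $N''$; checking that this triple $(\Phi,\Psi,\id_\HH)$ respects the groupoid product, the bundle action and the band square of the isomorphism diagram \ref{diagram for isomorphism} is the one genuinely computational point. This exhibits $(N'',\hat p,\hat p')$ as a Morita equivalence over the identity of $\BBB\toto\BBB_0$, completing item 3.
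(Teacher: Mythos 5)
Your proposal is correct and follows essentially the same route as the paper: item 1 comes down to the same surjective-submersion check for $(m',b)\mapsto t(b)$ (you argue by factoring through fibered-product projections where the paper lifts paths), item 2 is the same bookkeeping, and for item 3 both arguments build the Morita roof by fibering over $\mathcal{R}$ itself so as to have tautological arrows of $\mathcal{R}$ to conjugate with --- the paper takes $T=M'\times_{\tau,M,t}\mathcal{R}$ in one step, whereas you first form $M'\times_{\BBB_0}M$ and then pull back along $\phi$. The verifications you defer as ``genuinely computational'' are exactly those the paper also declares routine.
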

\begin{proof}
We wish to show that the map $\xi: M' \times_{\tau,M,s} B[q]\to M $ given by $(b,m)\mapsto t(b)$ for all $b\in B[q], m\in M$ is a surjective submersion. Let $m\in M$, take $m'\in (q\circ \tau)^{-1}(q(m))$ (which is non-empty by assumption). Now since $t^{-1}(q(m))$ is not empty so $(m',(\tau(m'),b,m))$
projects on $m$ by $\xi $, where $b\in t^{-1}(q(m)) $. This proves the surjectivity. To check that $\xi $ is indeed a submersion, we have to think in terms of infinitesimal paths.
Let $(m' , (\tau(m'),b,m ))  $ be a point in $M' \times_{\tau,M,s} B[q]$, and $m \in M$ such that $ \xi(m' , (\tau(m'),b,m ))=m$. Let
 $m (\epsilon) $ be a path in $M$ starting from $m $. Since the target map (of Lie groupoid $\BBB \toto \BBB_0 $) is always a surjective submersion there exists a path $b(\epsilon)$ in $\BBB $ starting at $b$ such that $t(b(\epsilon))=q(m(\epsilon)) $ (for all $\epsilon $ small enough). Since $q \circ \tau $ is a surjective submersion by assumption, there exists also a path $m'(\epsilon) $ in $M'$ starting at $m'$
such that $ q \circ \tau (m'(\epsilon)) =s(b(\epsilon)) $ for $\epsilon  $ small enough. By construction, the path $ (m'(\epsilon),\tau(m'(\epsilon), b(\epsilon), m(\epsilon) ))  $ is a path in  $ M' \times_{\tau,M,s} B[q]\to M$ which starts at $(m' , (\tau(m'),b,m ))  $ and projects by $\xi$ onto $m(\epsilon) $, which completes the proof of the first item.

In view of the proof of lemma \ref{lem: generalized surjective submersion}, all the algebraic axioms of Lie groupoid $\G \to \HH$-extensions are satisfied
by $(\mathcal R[\tau] \stackrel{\phi}{\to}\BBB[q \circ \tau ],\tau^* P \to M',\tau^* \chi)$. Lemma \ref{lem:manifold} implies that the sets involved are manifolds. This completes the proof of the second item.

For the last item, the manifold that we shall consider to construct an explicit Morita equivalence is:
   $$  T= M' \times_{ \tau, M, t } \mathcal R $$
   equipped with the surjective submersions  $q_M' :T \to M'$ and $q_M :T \to M$ given by the projection on the first component and the target of the second component respectively.
    By construction, the following diagram commutes:
    $$ \xymatrix{ & T \ar[dr]_{q_{M'}} \ar[dl]^{q_{M}}& \\ M \ar[dr]^{q} & & M' \ar[ll]_{\sigma} \ar[dl]_{q \circ \tau} \\ & B& \\ } $$
    This implies that $ \mathcal R[\tau][q_{M'}] \simeq \mathcal R[ \tau \circ q_{M'}] = \mathcal R[q_M] $ and also $$ p_{M'}^* \sigma^* P \simeq  (\sigma \circ q_{M'})^* P= p_M^* P.$$
     It is routine to check that this pair of isomorphisms form an isomorphism of Lie groupoid $\G \to \HH$-extensions between the pull back of $( q \circ \tau, \mathcal R[\sigma] \stackrel{\phi}{\to}\BBB[q \circ \tau ],\sigma^* P \to M',\tau^* \chi , )$ w.r.t. $q_{M'}$ and the pull back of $( q , \mathcal R \stackrel{\phi}{\to}\BBB[q ],P \to M',\chi , )$ w.r.t. $q_M$.

\end{proof}
We now prove theorem  \ref{th:coho=gerbes}.
\begin{proof}
According to the first item of proposition \ref{prop:cocycles},
 to an arbitrary $\G \to \HH$-valued non-Abelian $1$-cocycle $( \lambda,\gggg) $ with respect to an arbitrary open cover ${\mathcal U}$ corresponds an
 adapted Lie groupoid $\G \to \HH$-extension (which is by construction a Lie groupoid $\G \to \HH$-extension above the Lie groupoid $N \toto N $).

This assignment goes to the quotient to yield an assignment from  $\G \to \HH$-valued non-Abelian $1$-cohomology on $N$ to $\G \to \HH$-gerbes over $N \toto N $. This follows from the fact that the adapted Lie groupoid $\G \to \HH$-extensions associated to a
$\G \to \HH$-valued non-Abelian $1$-cocycle and a pull-back of it are Morita equivalent over the identity of $N \toto N $ by Lemma \ref{lem:lemma A}.
Also, by proposition \ref{prop:coboundaries},  the adapted extensions associated to two $\G \to \HH$-valued
non-Abelian $1$-cocycles that differ by a coboundary are isomorphic, hence Morita equivalent over the identity of $N \toto N$
by example \ref{ex:isom}.
Hence, the adapted Lie groupoid $\G \to \HH$-extensions associated to two
$\G \to \HH$-valued non-Abelian $1$-cocycles that define the same element in cohomology are  Morita equivalent over the identity of $N \toto N$, yielding a well-defined map from  $\G \to \HH$-valued non-Abelian $1$-cohomology on $N$ to $\G \to \HH$-gerbes over $N \toto N$, that we denote by $\Xi $.

 We first check that $\Xi $ is surjective. Let $(q,\mathcal R \stackrel{\phi}{\to} N[q],P \to M,\chi)$ be an arbitrary Lie groupoid $\G \to \HH$-extension over $N \toto N$. There exists an open cover $\mathcal U=(U_i)_{i \in I}$ of $N$ such that $q: M \to N $
 admits local sections $\sigma_i : U_i \to M $ for all $i \in I $, which, altogether, define a map $\sigma: \coprod_{i \in I} U_i \to M $.
 By lemma \ref{lem:lemma B}, $(q,\mathcal R \stackrel{\phi}{\to} N[q],P \to M,\chi)$  is Morita equivalent over the identity of $N \toto N$ to
 its pull-back with respect to $\sigma $. The pull-back being a Lie groupoid $\G \to \HH$-extension of the \v{C}ech groupoid is, by proposition \ref{prop:everyextensionisadapted}, isomorphic (hence Morita equivalent by example \ref{ex:isom}) to an adapted one. Hence $(q,\mathcal R \stackrel{\phi}{\to} N[q],P \to M,\chi)$ is Morita equivalent to an adapted Lie groupoid $\G \to \HH$-extension, which, by proposition  \ref{prop:cocycles}, comes from some non-Abelian $1$-cocycle. This proves that the assignment $\Xi $ is surjective.

 We then check that $ \Xi$ in injective. The proof is based on  the following general property of open covers.
 Assume that there is a commutative diagram as follows:
  $$  \xymatrix{  & M' \ar[dr]^p \ar[dl]_{p'} & \\ \coprod_{i \in I} U_i \ar[dr]_\imath & & \coprod_{j\in J} V_j \ar[dl]^\imath \\ & N &   }$$
  with $p$ and $p'$ surjective submersions (above, the symbol $ \imath$ stands for all the canonical inclusions, and  $(U_i)_{i \in I} $ and $(V_j)_{j \in J} $ are open covers of the manifold $N$). Then there is a common refinement $(W_k)_{k \in K} $ of $(U_i)_{i \in I} $ and $(V_j)_{j \in J} $ and a map $\tau: \coprod_{k \in K} W_k  \to M'$ such that the following diagram commutes:
   \begin{equation}\label{eq:commonrefinement}\xymatrix{ & \coprod_{k \in K} W_k \ar[d]^\tau \ar[ddr]^\imath \ar[ddl]_\imath & \\ & M' \ar[dr]_p \ar[dl]^{p'} & \\ \coprod_{i \in I} U_i \ar[dr]^\imath & & \coprod_{j\in J} V_j \ar[dl]_\imath \\ & N &   }
   \end{equation}
  where, again, we use the symbol $\imath $ to denote all the canonical inclusions.

Assume now two $\G \to \HH$-valued non-Abelian $1$-cocycles, defined w.r.t. open covers  $(U_i)_{i \in I} $ and $(V_j)_{j \in J} $ respectively, have adapted Lie groupoid $\G \to \HH$-extensions $Ext1 $  and $Ext2 $ (which are hence over the Lie groupoid $N\toto N$) associated with which are Morita equivalent.
By the very definition of Morita equivalence of $\G \to \HH$-extensions, this implies that there exists a manifold $M'$ together with surjective submersions $p :M' \to \coprod_{i\in I}U_i$ and $p' :M' \to \coprod_{j\in J}V_j$  such that $\iota \circ p = \iota \circ p' $ and such that the pull-backs $Ext1[p] $  and $Ext2[p'] $ of both extensions
to $M'$ are isomorphic. According to the discussion above, there exists a common refinement
 $\coprod_{k \in K} W_k$ of both open covers and a map  $\tau: \coprod_{k \in K} W_k \to M' $
   such that the diagram (\ref{eq:commonrefinement}) commutes. According to lemma \ref{lem:lemma B}, the pull-back of the adapted extension $Ext1 $
   on $\coprod_{k \in K} W_k$ is isomorphic to the pull-back of $Ext1[p] $ by $\sigma $. Similarly,  the pull-back of the adapted extension $Ext2 $ on $\coprod_{k \in K} W_k$ is isomorphic to the pull-back of $Ext2[p] $ by $\sigma $. Since $Ext1[p] $ and $Ext2[p] $ are isomoprhic, this implies that the pull-back of $Ext1 $
   and $Ext2 $ to $\coprod_{k \in K} W_k$ are isomorphic.
   According to Lemma \ref{lem:lemma A}, this means that the pull-back of both cocycles to $(W_k)_{k \in K} $ have corresponding adapted extensions that are isomorphic. By proposition \ref{prop:coboundaries}, it means that their pull-back to  $(W_k)_{k \in K} $ differ by a coboundary, i.e. that both cocycles define the same class in cohomology. This proves the injectivity.
\end{proof}

\subsection{$\G \to \HH$-gerbes over differentiable stacks.}

Recall that a Morita equivalence between two Lie groupoids $ \BBB \toto \BBB_0$
and $\BBB' \toto \BBB_0' $ is a quadruple ${\mathcal M}=(T,f,g,\Phi) $, with $T$ a manifold, $f,g$ surjective submersions from $T$ to $\BBB_0$
 and to $\BBB_0' $ respectively, and $\Phi$ a Lie groupoid isomorphism over the identity of $T$ between
 $\BBB[f] \toto T$ and $\BBB'[g] \toto T $.
 (Alternatively, Morita equivalence may be defined with the help of the notion of bi-modules, a description which happens to be
  equivalent to the previous one, see \cite{BehrendXu}.)
  Morita equivalent Lie groupoids often share similar properties, in particular which regards to cohomology.
  The next theorem shows that they also have the same gerbes over them.

\begin{them}\label{thm:morita_and_gerbes}
A Morita equivalence between two groupoids $ \BBB \toto \BBB_0$
and $\BBB' \toto \BBB_0' $ induces a one-to-one correspondence between:
\begin{enumerate}
\item $\G \to \HH$-gerbes over $ \BBB \toto \BBB_0 $,
\item $\G \to \HH$-gerbes over $ \BBB' \toto \BBB_0' $.
\end{enumerate}
\end{them}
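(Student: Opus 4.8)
The plan is to build the correspondence explicitly from the Morita data $\mathcal M=(T,f,g,\Phi)$ and then to verify bijectivity by producing an inverse out of the transpose Morita equivalence $\mathcal M^{-1}=(T,g,f,\Phi^{-1})$. First I would construct the forward map $F_{\mathcal M}$. Starting from a $\G\to\HH$-extension $(q,\mathcal R\stackrel{\phi}{\to}\BBB[q],P\to M,\chi)$ over $\BBB\toto\BBB_0$, I form the fibered product $\widetilde M:=M\times_{q,\BBB_0,f}T$ with its projections $\pi_M,\pi_T$. Since $f$ is a surjective submersion, so is $\pi_M$, and the pull-back construction for $\G\to\HH$-extensions produces an extension of $\BBB[\widetilde q]\toto\widetilde M$ with $\widetilde q:=q\circ\pi_M=f\circ\pi_T$. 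Now I use $\Phi$: pulling the groupoid isomorphism $\Phi\colon\BBB[f]\xrightarrow{\sim}\BBB'[g]$ back along $\pi_T$ and composing with the canonical identifications $\BBB[f][\pi_T]\simeq\BBB[f\circ\pi_T]=\BBB[\widetilde q]$ and $\BBB'[g][\pi_T]\simeq\BBB'[g\circ\pi_T]$ yields an isomorphism $\Theta\colon\BBB[\widetilde q]\xrightarrow{\sim}\BBB'[q']$ over the identity of $\widetilde M$, where $q':=g\circ\pi_T$ is again a surjective submersion onto $\BBB_0'$. Transporting the pulled-back extension through $\Theta$ (which leaves the kernel, hence $P$, the band and $\chi$ untouched, since $\Theta$ is an isomorphism over the identity of the base) defines $F_{\mathcal M}(Ext)$, a $\G\to\HH$-extension over $\BBB'\toto\BBB_0'$.

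Next I would check that $F_{\mathcal M}$ descends to $\G\to\HH$-gerbes, i.e.\ that it carries Morita equivalent extensions to Morita equivalent ones. Given a Morita equivalence $(M'',p,p')$ between two extensions $Ext_1,Ext_2$ over $\BBB$, I form $\widetilde{M''}:=M''\times_{\BBB_0}T$; its projections cover $p$ and $p'$ and are surjective submersions, and $q'_1\circ\widetilde p=q'_2\circ\widetilde p'$ holds because both sides only involve the common $T$-component. For the second axiom of Morita equivalence, the fact that pull-back commutes with the fibered product over $T$ gives $F_{\mathcal M}(Ext_i)[\widetilde p]\simeq$ the $\Theta$-transport of $Ext_i[p]$ pulled back to $\widetilde{M''}$; combined with the hypothesis $Ext_1[p]\simeq Ext_2[p']$ and the functoriality of pull-back and of transport by an isomorphism, this produces the required isomorphism, so the triple is a Morita equivalence between $F_{\mathcal M}(Ext_1)$ and $F_{\mathcal M}(Ext_2)$. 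Thus $F_{\mathcal M}$ induces a map on gerbes, and the very same recipe applied to $\mathcal M^{-1}$ gives a map $F_{\mathcal M^{-1}}$ in the opposite direction.

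Finally I must prove $F_{\mathcal M^{-1}}\circ F_{\mathcal M}\simeq\mathrm{id}$ (and symmetrically), which is where the main obstacle lies. Unwinding the two constructions, the composite of an extension with base $M$ acquires base $\widetilde{\widetilde M}=(M\times_{\BBB_0}T)\times_{\BBB_0'}T$; its underlying $\G$-extension is the pull-back of $\mathcal R$ along the projection $\beta\colon\widetilde{\widetilde M}\to M$, and its structure groupoid is obtained from $\BBB[q\circ\beta]$ by transporting first through $\Theta$ and then through the $\Phi^{-1}$-induced isomorphism. The difficulty is that this double transport is \emph{not} the identity: it is conjugation by the natural transformation $c\colon\widetilde{\widetilde M}\to\BBB$ sending $(m,t,t')$ to the $\BBB$-component of $\Phi^{-1}\bigl(t,\epsilon'(g(t)),t'\bigr)$, whose source and target are $f(t)=q(\beta(m,t,t'))$ and $f(t')$; consequently the two surjective submersions onto $\BBB_0$ that one must compare, namely $q\circ\beta$ and $f\circ\rho$ (with $\rho$ the projection onto the last factor), differ and are only reconciled along $c$. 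I would dispose of this by exhibiting the explicit Morita equivalence with apex $\widetilde{\widetilde M}\times_{f\circ\rho,\BBB_0,q}M$, whose two legs are surjective submersions onto $\widetilde{\widetilde M}$ and $M$ that agree after composition with the respective submersions to $\BBB_0$ precisely because of $c$, and by checking that the arrow $(m,c(m,t,t'),m_0)$ of $\BBB[q]$ identifies the two pulled-back extensions over that apex; by Example \ref{ex:pullback} the resulting class is that of $Ext$. Equivalently, and more conceptually, one shows that $F$ is functorial under composition of Morita equivalences with $F_{\mathrm{id}}\simeq\mathrm{id}$ and that $\mathcal M^{-1}\circ\mathcal M$ is isomorphic to the identity bibundle of $\BBB$, which packages the same fibered-product bookkeeping once and for all. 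In either route the crux is this tracking of iterated fibered products and the verification that conjugation by $c$ is absorbed into a genuine Morita equivalence of extensions; the algebraic checks that kernels, bands, $\HH$-bundles and $\chi$ are respected remain routine in view of Proposition \ref{prop:pullbackExtensions}.
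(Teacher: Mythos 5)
Your construction of the forward map is exactly the paper's: form $M\times_{q,\BBB_0,f}T$, pull the extension back along the projection to $M$, and transport the quotient groupoid through the isomorphism $\BBB[q\circ\alpha]\simeq\BBB'[g\circ\beta]$ induced by $\Phi$; your descent argument (that Morita equivalent extensions over $\BBB$ map to Morita equivalent extensions over $\BBB'$, via the apex $M''\times_{\BBB_0}T$) is also the paper's. Where you genuinely diverge is on the round trip. The paper dismisses $F_{\mathcal M^{-1}}\circ F_{\mathcal M}\simeq\mathrm{id}$ in one line by invoking Example \ref{ex:pullback}, on the grounds that the composite \emph{is} a pull-back of the original extension; you correctly observe that it is not, because after the two fibered products the anchor $M''\to\BBB_0$ is $(m,t,t')\mapsto f(t')$ rather than $(m,t,t')\mapsto q(m)=f(t)$, the two being reconciled only through the arrow $c$ obtained by applying $\Phi^{-1}$ to a unit of $\BBB'[g]$. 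Spotting this and proposing the apex $\widetilde{\widetilde M}\times_{f\circ\rho,\BBB_0,q}M$ (on which the two anchors to $\BBB_0$ genuinely agree, as the definition of Morita equivalence of extensions requires) is a real improvement on the paper's own argument. One point you should still nail down before declaring victory: $c$ is an arrow of $\BBB[q]$, not of $\mathcal R$, so ``conjugation by $c$'' does not yet define the required isomorphism between the two pull-backs of $\mathcal R$ over your apex --- the fibre of $\mathcal R\to\BBB[q]$ over $c$ is only a $\G$-torsor, so you need either a local lift of $c$ glued coherently, or an argument that the identification of the $\mathcal R$-parts is forced by the band data once the $\BBB[q]$-level conjugation is fixed. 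The functorial route you mention at the end ($F$ compatible with composition of Morita bibundles) would package the same bookkeeping, but it faces the same lifting issue at the crucial step.
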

\begin{proof}
Let ${\mathcal M}=(T,f,g,\Phi)$ be a Morita equivalence between the Lie groupoids $ \BBB \toto \BBB_0$ and $\BBB' \toto \BBB_0' $, i.e. $f:T \to \BBB_0$ and $g:T \to \BBB'_0$ are surjective submersions and $\Phi: \BBB[f] \to \BBB'[g]$ is an isomorphism of Lie groupoids between the the pull-back groupoids $ \BBB[f] \toto T$ and $\BBB'[g] \toto T $.
We intend to assign to an arbitrary Lie groupoid $\G \to \HH$-extension $Ext:=(q, X\stackrel{\phi}{\to} \BBB[q], P\to M, \chi)$ over $ \BBB \toto \BBB_0 $ a  Lie groupoid $\G \to \HH$-extension over $\BBB' \toto \BBB_0'$.
 To start with, we consider the set $M':= M\times_{q,\BBB_{0},f}T$. One checks easily that $M'$ is a manifold such that the projections $\alpha,\beta $ onto the first and second components are surjective submersions,
 and as well as the maps $q \circ \alpha : M' \to \BBB_0$ and $q':= g \circ \beta : M'\to \BBB_0' $.
Then we consider the pull-back of $Ext $ by $\alpha $, namely
 $$ ( \mathcal R[\alpha]\stackrel{\phi[\alpha]}{\to} \BBB[q \circ \alpha], \alpha^* P\to  M', \chi[\alpha]) .$$
We claim that there exists an isomorphism $\Phi': \BBB[q \circ \alpha] \to \BBB'[g \circ \beta]$, so that
$$ (g \circ \beta ,  \mathcal R[\alpha]\stackrel{\Phi ' \circ \phi[\alpha]}{\longrightarrow} \BBB'[g \circ \beta], \alpha^* P\to  M', \chi[\alpha]) $$
  is a Lie groupoid $\G \to \HH$-extension over $\BBB' \toto \BBB_0' $.
For all $((m_1,t_1),b,(m_2,t_2)) \in M' \times_{q \circ \alpha , \BBB_0 , s} B  \times_{t, \BBB_0,q \circ \alpha } M'$, we have the relations
  $$ f(t_1)=q(m_1), f(t_2)=q(m_2) , s(b)=q \circ \alpha (m_1,t_1)=q(m_1) , t(b)=q \circ \alpha (m_2,t_2)=q(m_2) $$
which impliy that $f(t_1)=s(b), f(t_2)=t(b)$, hence that $(t_1,b,t_2) \in \BBB[f]$. This allows us to set
  $$ \Phi'((m_1,t_1),b,(m_2,t_2)):=((m_1,t_1),b',(m_2,t_2))$$
where $b' \in \BBB$ is given by $\Phi(t_1,b,t_2)=(t_1,b',t_2)$.
By construction,  $((m_1,t_1),b',(m_2,t_2)) $ is in $B'[ g \circ \beta]$,
and it is routine to check that $\Phi'$ is an isomorphism of Lie groupoids.

 %
 We have therefore assigned  a Lie groupoid $\G \to \HH$-extension over
 $\BBB' \toto \BBB_0' $ to a Lie groupoid  $\G \to \HH$-extension over
 $\BBB \toto \BBB_0 $, as intended. In picture:
            $$\xymatrix{\mathcal R \ar[d] & P \ar[ddl] & \mathcal R[\alpha] \ar[d] & \alpha ^* P \ar[ddl] & \mathcal R[\alpha] \ar[d] & \alpha ^* P \ar[ddl] \\
                      B[q]\ar@<1pt>[d] \ar@<-1pt>[d]& & B[q \circ \alpha] \ar@<1pt>[d] \ar@<-1pt>[d] & \stackrel{\tilde{F}}{\rightsquigarrow} & B'[g \circ \beta]\ar@<1pt>[d] \ar@<-1pt>[d]\\
                      M \ar[dr]_{q} & B \ar@<1pt>[d] \ar@<-1pt>[d] & M' \ar@/_2pc/[ll]_{\alpha}\ar[dl]^{q \circ \alpha} & & M' \ar[dr]_{g \circ \beta}& B' \ar@<1pt>[d] \ar@<-1pt>[d]\\
                         & B_{0} & & & & B'_{0}  } $$
                         $$\alpha(m,n)=m, q'(m,n)=g(n)$$
 The same construction could be done to assign a $\G \to \HH$-extension over
 $\BBB' \toto \BBB'_0 $ to a $\G \to \HH$-extension over
 $\BBB \toto \BBB_0 $.
Since the roles of $\BBB \toto \BBB_0 $ and
 $\BBB' \toto \BBB_0' $ can be exchanged, in order to check that both assignments induce one-to-one correspondence between the corresponding gerbes, it is necessary and sufficient to check
 that:
\begin{itemize}
\item[(i)] Morita equivalent Lie groupoid $\G \to \HH$-extensions over the Lie groupoid $\BBB \toto \BBB_0 $ are mapped to Morita equivalent Lie groupoid $\G \to \HH$-extensions over the Lie groupoid $\BBB' \toto \BBB'_0 $ by the first assignment.
\item[(ii)] applying the first assignment, then the second one to a Lie groupoid $\G \to \HH$-extension $Ext $ over $\BBB \toto \BBB_0$ yields a $\G \to \HH$-extension which is Morita equivalent to $Ext $.
\end{itemize}
Let us check these two points. The second one is an immediate consequence of example \ref{ex:pullback}, since a Lie groupoid $\G \to \HH$-extension over $\BBB \toto \BBB_0 $ is always Morita equivalent to its pull-back.
The first one is more involved. Let $Ext_1,Ext_2$ be two Lie groupoid $\G \to \HH$-extensions over $\BBB \toto \BBB_0 $, namely, to fix notations
 $$  Ext_i:=( q_i, \mathcal R_i\stackrel{\phi_i}{\to} \BBB[q_i], P_i\to M_i, \chi_i )  \hbox{ , } i=1,2, $$
 and let
 $$Ext_i':= (g \circ \beta ,  \mathcal R_i[\alpha]\stackrel{\Phi ' \circ \phi_i[\alpha]}{\longrightarrow} \BBB'[g \circ \beta], \alpha^* P\to  M'_i, \chi_i[\alpha]) \hbox{ , } i=1,2. $$
be the associated Lie groupoid $\G \to \HH$-extensions over $\BBB' \toto \BBB'_0 $ constructed as above.
Assume now that $Ext_i \hbox{ , } i=1,2.  $ are Morita equivalent. This means, first, that there is a commutative diagram of surjective submersions:
  $$\xymatrix{ & M \ar[dr]^{p_2} \ar[dl]_{p_1}& \\ M_1 \ar[dr]_{q_1} & & M_2 \ar[dl]^{q_2} \\ & \BBB_0& \\ }   $$
 This implies that the following is also a commutative diagram of surjective submersions:
  $$\xymatrix{ &  M\times_{\BBB_{0}}T\ar[dr]^{(p_2,id_T)} \ar[dl]_{(p_1,id_T)}& \\ M'_1 \ar[dr]_{\beta \circ g} & & M'_2 \ar[dl]^{\beta \circ g} \\ & \BBB_0& \\ }   $$
  where $M'_i:=M_i \times_{\BBB_{0}}T$ and the fibred product $M\times_{\BBB_{0}}T$ are considered w.r.t. the maps $q_1 \circ p_1 (=q_2 \circ p_2): M \to \BBB_0$ and $f: T \to \BBB_0$.

  To show that both pull-back of $Ext'_i$ w.r.t. $(p_i,id_T)$ with $i=1,2$ are isomorphic as $\G \to \HH $-extensions, it suffices to show that $\mathcal R_i[\alpha \circ (p_i,id_T)] \toto M'_i$ with $i=1,2$ are isomorphic Lie groupoids. But this is a consequence of the existence of isomorphism between $\mathcal R_i[p_i] \toto M_i \hbox{ , } i=1,2$ which is in turn a consequence of the assumption of $(M,p_1,p_2,\phi)$ being a Morita equivalence between $Ext_1, Ext_2$.

\end{proof}

Let us denote by $F({\mathcal M}) $ the correspondence between $\G \to \HH$-gerbes associated to a given Morita equivalence ${\mathcal M} $.
It is easy to check that, given ${\mathcal M}_1$ and ${\mathcal M}_2$ composable Morita equivalences, the relation  $F({\mathcal M}_1 ) \circ F( {\mathcal M}_2)
 = F({\mathcal M}_1 \circ {\mathcal M}_2) $ holds when the composition ${\mathcal M}_1 \circ {\mathcal M}_2$ of Morita equivalences,
 is defined as in \cite{BehrendXu}.
According to \cite{BehrendXu}, Lie groupoids up to Morita equivalence are one possible description  of differential stacks,
so that theorem \ref{thm:morita_and_gerbes} makes sense of the notion of $\G \to \HH$-gerbes valued in a differential stack.

\section{Acknowledgments} I would like to thank the University of Metz for allowing me to stay there in autumn 2012.
This work was
supported by CMUC-FCT (Portugal) and FCT grants SFRH/BD/33810/2009, PTDC/MAT/099880/2008 and PEst-C/MAT/UI0324/2011
 through European program COMPETE/FEDER.
 Last, I would like to express my gratitude to the anonymous referee for his enlightening comments.

\bibliographystyle{plain}	 
\bibliography{myrefrences}		 
\end{document}